\let\footnote=\endnote
\def\g{\gamma}
\def\t{\theta}
\def\o{\omega}
\newcommand{\tsum}{\textstyle{\sum}}
\def\R{\mathbb{R}}
\def\la{\langle}
\def\ra{\rangle}
\def\Eb{\mathbb{E}}
\def\lc{\left\lceil}   
\def\rc{\right\rceil}
\def\eqnok#1{(\ref{#1})}
\newcommand{\T}{\mathrm{T}}
\newcommand{\gap}{\mathrm{gap}}
\newcommand{\res}{\mathrm{res}}
\newcommand{\erf}{\mathrm{erf}}
\newcommand{\bbr}{\Bbb{R}}
\newcommand{\beq}{\begin{equation}}
\newcommand{\eeq}{\end{equation}}
\newcommand{\beqa}{\begin{eqnarray}}
\newcommand{\eeqa}{\end{eqnarray}}
\newcommand{\beqas}{\begin{eqnarray*}}
\newcommand{\eeqas}{\end{eqnarray*}}
\def\argmin{{\rm argmin}}
\def\argmax{{\rm argmax}}
\def\bbe{{\mathbb{E}}}
\def\vgap{\vspace*{.1in}}
\newcommand{\nn}{\nonumber}
\title{Simple and optimal methods for stochastic variational inequalities, I: operator extrapolation
\thanks{
This research was partially supported by the ARO grant W911NF-18-1-0223 and ONR grant N00014-20-1-2089.
Coauthors of this paper are listed according to the alphabetic order.
}}
\author{
Georgios Kotsalis
	\thanks{H. Milton Stewart School of Industrial and Systems Engineering, Georgia Institute of Technology, Atlanta, GA, 30332 .
		(email: {\tt gkotsalis3@gatech.edu}).}
	\and
Guanghui Lan
	\thanks{H. Milton Stewart School of Industrial and Systems Engineering, Georgia Institute of Technology, Atlanta, GA, 30332 .
		(email: {\tt george.lan@isye.gatech.edu}).}
	\and
Tianjiao Li
	\thanks{H. Milton Stewart School of Industrial and Systems Engineering, Georgia Institute of Technology, Atlanta, GA, 30332 .
		(email: {\tt tli432@gatech.edu}).}
}
\date{\today}
\begin{document} 

\maketitle

\begin{abstract}
In this paper we first present a novel operator extrapolation (OE) method
for solving deterministic variational inequality (VI) problems. Similar to the gradient (operator) projection method,
OE updates one single search sequence by solving a single projection subproblem in each iteration.
We show that OE can achieve the optimal rate of convergence for solving a variety of
VI problems in a much simpler way than existing approaches.
We then introduce the stochastic operator extrapolation (SOE) method and establish 
its optimal convergence behavior for solving different stochastic VI problems.
In particular, SOE achieves the optimal complexity for solving a fundamental problem, i.e., stochastic smooth and strongly
monotone VI, for the first time in the literature. We also present
a stochastic block operator extrapolations (SBOE) method to further reduce the iteration cost for the OE method applied to large-scale
deterministic VIs with a certain block structure.
Numerical experiments
have been conducted to demonstrate the potential advantages of the proposed algorithms.
In fact, all these algorithms are applied to solve generalized monotone variational inequality (GMVI) problems
whose operator is not necessarily monotone.
We will also discuss optimal OE-based policy evaluation methods for reinforcement learning in a companion paper.\\

\noindent {\bf Keywords:} Variational inequality, operator extrapolation, acceleration, stochastic policy evaluation.\\
{\bf Mathematics Subject Classification (2000):} 90C25, 90C15, 62L20, 68Q25.
 \end{abstract}

\section{Introduction}   
\textcolor{black}{A unified framework to solve optimization, fixed-point equations, equilibrium and complementarity problems is provided by the concept of a variational inequalitiy (VI) }(see  \cite{FacPang03} for an extensive review and bibliography).
In this paper, we consider a class of generalized monotone variational inequality (GMVI) problems:
\begin{equation}
\label{VIP}
\text{Find}~ x^* \in X: ~~~~ \langle F(x^*) , x - x^* \rangle \geq 0, ~~~ \forall x \in X,
\end{equation}
where $X \subseteq \mathbb{R}^n$ is a nonempty closed convex set, and $F : X \rightarrow \mathbb{R}^n$ is an $L$-Lipschitz continuous map, i.e. for some $L > 0 $,
\begin{equation}
\label{Lipschitz}
\| F(x_1) - F(x_2) \|_*   \leq L \| x_1 - x_2 \|,~~~~ \forall x_1, x_2 \in X.
\end{equation}
VIs satisfying \eqnok{Lipschitz} are often said to be smooth.
In addition, we assume that $F$ satisfies a generalized monotonicity condition 
\begin{equation} \label{G_monotone0}
\langle F(x), x - x^* \rangle \ge \mu \|x - x^*\|^2, \ \ \forall x \in X
\end{equation}
for some $\mu \ge 0$. Throughout this paper we assume the existence of the solution $x^*$ to problem \eqnok{VIP}-\eqnok{G_monotone0}.

Clearly, condition \eqref{G_monotone0} holds if $F$ is monotone, 
i.e., there exists $ \mu \geq 0 $ such that 
\begin{equation}
\label{strongly_monotone}
\langle F(x_1) - F(x_2), x_1 - x_2 \rangle \geq \mu \|x_1 - x_2\|^2,~~~~ \forall x_1, x_2 \in X.
\end{equation}
In particular, $F$ is strongly monotone if $\mu > 0$ in \eqnok{strongly_monotone}. 
However, condition \eqnok{G_monotone0} does not necessarily imply the monotonicity of $F$. For example, pseudo-monotone
VIs satisfy \eqnok{G_monotone0}, but not \eqnok{strongly_monotone}~\cite{dang2015convergence,LanBook2020}. 
A notion related to strong monotonicity
is the so-called weak sharpness condition, i.e.,
$\exists \mu > 0 $ s.t. 
\beq \label{weak_sharpness}
\langle F(x^*), x - x^* \rangle \geq \mu \|x - x^*\|^2,~~~\forall x \in X.
\eeq
Note that if $F$ is monotone and satisfies \eqnok{weak_sharpness}, then \eqref{G_monotone0} must hold.
Therefore,  GMVI covers many VI problems that have been studied in the literature. 
As a special case of GMVI, we call problem~\eqnok{VIP}-\eqnok{G_monotone0} generalized strongly monotone VI (GSMVI) if $\mu > 0$ in \eqnok{G_monotone0}.
In this paper, we consider both deterministic GMVI with exact information about the operator $F$, and stochastic GMVI with 
only unbiased estimators of $F$ obtained through a stochastic oracle. 

VI has attracted much interest recently in machine learning, statistics and artificial intelligence in addition to 
some more traditional applications, e.g., in transportation (see, e.g., \cite{Dafermos80} and Section~\ref{transportation}). Specifically,
GMVIs have been used to solve a class of minimax problems inspired by
the Generative Adversary Networks (e.g., \cite{LinLiuRafiqueYang2018}).
More recently, Juditsky and Nemirovski developed some interesting applications 
of stochastic strongly monotone VIs in signal estimation using generalized linear models (Chapter 5.2 of  \cite{ArkadiBook2020},
and Section~\ref{arkadi examples}). Our study has also been
motivated by the emergent application of stochastic strongly monotone VIs for policy evaluation
in reinforcement learning (RL)~\cite{Bertsekas2011}. For a given linear operator $T: \bbr^n \to \bbr^n$,
the basic policy evaluation problem can be formulated as a fixed-point equation 
\[
\text{Find}~ x^* \in X: ~~~~ x^* = T(x^*),
\]
which is a special case of \eqref{VIP}
with $X = \bbr^n$ and $F(x) = x - T(x)$.
Observe that usually we do not have unbiased estimators of $F$ for the above VIs due to the existence of Markovian noise in RL.
We will study this more challenging problem in a companion paper after 
developing some basic results in this work.

VI has been the focus of  many algorithmic studies due to their relevance in practice.
Classical algorithms for VI include, but not limited to, the gradient projection
method (e.g., \cite{Bert99}, \cite{Sibony70}), Korpelevich’s extragradient method \cite{korpelevich1976extragradient}, and the proximal point algorithm (e.g. \cite{rockafellar1976monotone}).
While earlier studies  focused on the asymptotic convergence behavior of different algorithms,
much recent
research effort has been devoted to algorithms exhibiting strong performance guarantees in a finite number
of iterations (a.k.a., iteration complexity).
More specifically, Nemirovski 
in \cite{Nem05-1} presented a mirror-prox method by properly modifying the extragradient algorithm \cite{korpelevich1983extrapolation} and show that
it can achieve an  ${\cal O} \{1/\epsilon\}$ complexity bound for solving smooth monotone VI problems. Here, $\epsilon >0 $ denotes the target accuracy in terms  
of weak gap, i.e., $\gap(\bar{x}) := \max_{x \in X} \langle F(x),  \bar{x} -x \rangle$.
This bound significantly improves the ${\cal O} \{1/\epsilon^2\}$  
bound for solving VIs with bounded operators (i.e.,
nonsmooth VIs) (e.g., \cite{BenNem05-1}).
Nemirovski's mirror-prox method has inspired many studies for solving VI problems under either deterministic  (see, e.g., \cite{auslender2005interior,Nest07-2,NesterovScimali06,MonSva10-1,dang2015convergence, Malisky15})
or stochastic (see, e.g., \cite{NJLS09-1,juditsky2011solving,CheLanOu14-1,yousefian2017smoothing,iusem2017extragradient,Shanbhag2019}) settings.

In spite of these efforts,
there remain a few significant issues on the development of efficient solution methods for VIs.
Firstly, there does not exist an optimal
method for solving stochastic smooth and strongly monotone VIs,
even though many VIs arising from different applications are given in this form~\cite{Bertsekas2011,ArkadiBook2020}. In the deterministic setting,
Nesterov and Scimali~\cite{NesterovScimali06} show that the dual extrapolation method~\cite{Nest07-2}
can achieve an optimal ${\cal O}\{(L/\mu) \log (1/\epsilon)\}$ complexity bound in term of the
distance to the optimal solution, which significantly improves the 
${\cal O} \{(L/\mu)^2 \log (1/\epsilon) \}$ complexity bound of the gradient (operator) projection method.
However, only an ${\cal O}(1/\epsilon^2)$ sampling complexity bound has been reported for 
solving stochastic smooth and strongly monotone VIs~\cite{Shanbhag2019}. The latter bound
is actually worse than the ${\cal O}\{1/\epsilon\}$ complexity
bound for solving stochastic nonsmooth strongly monotone VIs, which can be obtained by applying the stochastic approximation method
to VI~\cite{NJLS09-1,LanBook2020}.  However, it is well-known that these nonsmooth methods are
not optimal for smooth VIs in terms of the dependence on the condition number and
the initial distance to the optimal solution. They can not take advantage of a variety of variance reduction techniques 
available for smooth problems either. 

Secondly, most existing optimal VI algorithms, including the mirror-prox method~\cite{Nem05-1} and the dual extrapolation 
method~\cite{Nest07-2}, are somewhat complicated in the sense that they involve 
the updating of two or three intertwined sequences and the solution of two 
projection subproblems in each iteration.
More recently, Malisky~\cite{Malisky15} develops a novel variant of the mirror-prox method for smooth monotone VIs,
called projected reflected gradient method,
which only requires one projection step in each iteration. However,
this method in~\cite{Malisky15} still has the following possible limitations:  a) 
it requires $F$ to be defined over the $\bbr^n$ rather than the feasible region $X$;
b) it still needs to maintain the updating
of two sequences; and c) it \textcolor{black}{does not} easily extend for solving strongly monotone VIs.
To the best of our knowledge, there does not exist an optimal method for VIs that requires
the computation of only one sequence, similarly to the simple gradient projection
method. 

In this paper, we aim to address some of the aforementioned issues in
the design and analysis of efficient VI solution methods. Our main contributions can be briefly summarized as follows.
Firstly, we present a new and simple first-order algorithm, called {\em operator extrapolation} (OE) method, which
only requires the updating of one sequence $\{x_t\}$ for solving VI problems. Each OE iteration involves 
the evaluation of the operator $F$ once at $x_t$, and the updating from $x_t$ to $x_{t+1}$ through
only one projection subproblem:
\[
x_{k+1} = \argmin_{x\in X} ~   \gamma_t \big\langle F(x_t) + \lambda_t \big( F(x_t) - F(x_{t-1}) \big) , x \big\rangle + \tfrac{1}{2} \|x - x_t\|^2.
\]
Note that the above Euclidean projection can be generalized to the non-Euclidean setting (see Section~\ref{sec_term}).
We show that OE exhibits an ${\cal O}\{(L/\mu) \log (1/\epsilon)\}$ iteration complexity in terms of 
the distance to the optimal solution for GSMVIs, which cover smooth and strongly monotone VIs as a special case. Hence,
it achieves the optimal complexity for the latter class of VIs in a much simpler way than \cite{Nest07-2},
and the one for the more general GSMVIs for the first time in the literature.
Moreover, it exhibits an ${\cal O}\{1/\epsilon^2\}$ complexity in terms of residual (i.e., $\res(\bar x) := \min_{y \in - N_X(\bar x)} \|y - F(\bar x)\|_*$)
for solving GMVIs which are not necessarily \textcolor{black}{strongly} monotone (i.e., $\mu = 0$ in \eqnok{G_monotone0}).
In addition, we show \textcolor{black}{in the case that $X$ is bounded } that OE can achieve an optimal ${\cal O}\{1/\epsilon\}$ complexity in terms of weak gap 
for solving smooth monotone VIs similar to \cite{Nem05-1}. 

Secondly, we present a stochastic OE (SOE) method obtained by replacing the
operator $F(x_t)$ with its unbiased estimator in OE for solving stochastic VI. We show that SOE employed with different stepsize policies
can achieve either nearly optimal or optimal complexity for stochastic GSMVI. More specifically,
with only one sample of the random variables in each iteration, 
it achieves the ${\cal O} \{ L / (\mu \sqrt{\epsilon}) + \sigma^2/(\mu^2 \epsilon)\}$ 
and ${\cal O} \{ (L/\mu) \log (1/\epsilon) + \sigma^2 \log (1/\epsilon) /(\mu^2 \epsilon)\}$
nearly optimal sampling complexity bounds, respectively, by using a simple decreasing stepsize policy
and a new constant stepsize policy. By using a novel index-resetting stepsize policy, 
it achieves the optimal  ${\cal O} \{ (L/\mu) \log (1/\epsilon) + \sigma^2 /(\mu^2 \epsilon)\}$ complexity bound.
To the best of our knowledge, all these complexity bounds are new for solving stochastic GSMVIs. In fact,
they significantly improve the existing ${\cal O}\{1/\epsilon^2\}$ sampling complexity for solving stochastic smooth and strongly
monotone VIs in \cite{Shanbhag2019}. In comparison with nonsmooth stochastic VI methods, SOE can benefit from variance reduction obtained
from mini-batches which are widely used in practice. In addition, we establish the convergence of SOE
for solving stochastic GMVI and monotone VI
in terms of expected \textcolor{black}{squared} residual and weak gap, respectively.

Thirdly, we develop a stochastic block OE (SBOE) method for solving
GSMVIs whose feasible region is defined as $X := X_1 \times X_2 \times \cdots X_b$.
The SBOE method randomly updates one block of variables and hence its iteration cost is
cheaper than that of OE, especially if some recursive updating of the operator $F$ is employed.
We show that SBOE still achieves a linear rate of convergence for solving GSMVIs.  
In addition, we show that SBOE can achieve an ${\cal O}\{1/\epsilon\}$ iteration complexity in terms of expected weak gap 
for solving smooth monotone VI problems.
To the best of our knowledge,
this is the first time that the convergence rate of this type of method has been
established for solving VI problems in the literature,
while stochastic block coordinate descent methods have been intensively studied for solving
optimization problems.

Finally, we conduct numerical experiments on the proposed
algorithms, including OE, SOE and SBOE, applied to solve a class of traffic assignment problems and
signal estimation problems, and demonstrate their advantages over 
some existing VI methods, including the dual extrapolation method and 
 stochastic approximation for solving smooth and strongly monotone VIs.

This paper is organized as follows. We discuss the OE method and its convergence properties for deterministic 
VIs in Section~\ref{Sec 2}. We then present the SOE method for solving stochastic VIs 
in Section~\ref{sec_SOE}. The SBOE method for deterministic GSMVIs is discussed in Section~\ref{sec_SBOE}, and
the results from our numerical experiments are reported in Section~\ref{sec-num}. 
We complete this paper with some brief concluding remarks in Section~\ref{sec_conclusion}.

\subsection{Notation and terminology} \label{sec_term}
Let $\mathbb{R}$ denote the set of real numbers.
All vectors are viewed as column vectors, and
for a vector $x \in \mathbb{R}^d$, we use $x^{\top}$ to denote its transpose. 
The identity matrix in $\mathbb{R}^d$ is denoted by $I_d$.  \textcolor{black}{Given a norm $ \| \cdot \| $ in $\mathbb{R}^d$ the associated dual norm $ \| \cdot \|_*$ is defined as 
$
\| z \|_* = \sup \{ \langle x, z \rangle : \| x \| \leq 1 \}.
$}
For any $n \ge 1$, the set of integers $\{1,\ldots,n\}$ is denoted by $[n]$.
For any $s\in \R$,  $\lceil s \rceil$ denotes
the nearest integer to $s$ from above. 
We use $\Eb_s[X]$ to denote the expectation of a random variable $X$ on $\{i_1,\ldots, i_s\}$.    
For a given strongly convex function $\o$ with modulus $1$, we define the prox-function (or Bregman's distance) associated with $\o$ as
$V(x,y) \equiv V_{\o}(x,y): = \o(y)-\o(x)-\la \o^{\prime}(x),y-x\ra$, $\forall x,y \in X$,
where $\o^{\prime}(x)\in \partial \o(x)$  is an arbitrary subgradient of $\o$ at $x$.
Note that by the strong convexity of $\o$, we have
\beq \label{strong_convex_V}
V(x,y) \ge \tfrac{1}{2} \|x - y\|^2.
\eeq
With the definition of the Bregman's distance, we can replace the generalized
strong monotonicity assumption in \eqnok{G_monotone0} by 
\begin{equation} \label{G_monotone}
\langle F(x), x - x^* \rangle \ge 2 \mu V(x, x^*), \ \ \forall x \in X.
\end{equation}

\section{Deterministic VIs}\label{Sec 2}
We focus on the operator extrapolation (OE) method for solving deterministic VIs in this section.
\subsection{The operator extrapolation method}

As shown in Algorithm~\ref{alg:deterministic},
the basic  algorithmic scheme for the proposed OE method is conceptually simple.
It only involves a single sequence of iterates $ \{ x_t \} $, along with two sequences of 
nonnegative parameters $ \{ \gamma_t \} $ and  $ \{ \lambda_t \} $, and a prox-function $ V : X \times X \rightarrow \mathbb{R}$. 
The parameters $\{\lambda_t\}$ define the way we take extrapolation
on the operators, while the parameters $\{\gamma_t\}$ can be viewed as stepsizes.
If $V(x_t, x) = \|x - x_t\|_2^2 /2 $ and $\lambda_t = 0$, then this method
reduces to the well-known operator projection method for VI, or gradient projection method if $F$ is a gradient field.
A distinctive feature of the OE method is that, when $\lambda_t > 0$, it performs an operator 
extrapolation step given by $ F(x_t) + \lambda_t \big( F(x_t) - F(x_{t-1}))$, before the projection on $X$.
While the extrapolation of gradients and its relation with Nesterov's acceleration
had been been studied before (see Section 3 of~\cite{LanZhou2018RGEM}), the extrapolation of operators for VIs
has not been studied before in the literature to the best of our knowledge.

\begin{algorithm}[H]  \caption{The Operator Extrapolation (OE) Method}  
	\label{alg:deterministic}
	\begin{algorithmic} 
		\STATE{Let $x_0 = x_1 \in X$, and the nonnegative parameters $\{ \gamma_t\}$ and $\{\lambda_t\}$ be given. }
		\FOR{$ t = 1, \ldots, k$}
		\STATE 
		\beq \label{deterministic_algorithm_step}
		x_{t+1} = \argmin_{x\in X} ~   \gamma_t \big\langle F(x_t) + \lambda_t \big( F(x_t) - F(x_{t-1}) \big) , x \big\rangle + V(x_t, x).
		\eeq
		\ENDFOR
	\end{algorithmic}
\end{algorithm} 
We add some remarks about the differences between the OE method 
with a few other existing VI methods, especially those with accelerated rate of convergence.
Firstly, while the classic extragradient method \cite{korpelevich1976extragradient} and mirror-prox method~\cite{Nem05-1} require at least two operator evaluations
and two projections, OE only requires one operator evaluation $F(x_t)$ and one projection (or prox-mapping) over the set $X$.
The more recent projected reflected gradient method for VI in~\cite{Malisky15} requires a simple recursion at each iteration given by 
\[
x_{t+1} = \argmin_{x\in X} ~   \gamma_t \big\langle F(x_t + \beta_t (x_t - x_{t-1})) + V(x_t, x).
\] 
This scheme implicitly maintains two
sequences, i.e., $\{x_t\}$ and $\{x_t + \beta_t(x_t - x_{t-1})\}$. Because the sequence  $\{x_t + \lambda_t(x_t - x_{t-1})\}$ may 
sit outside the feasible region $X$, it requires $F$ to be well-defined over the whole $\bbr^n$.
 Moreover,  all these methods in \cite{korpelevich1976extragradient,Malisky15, Nem05-1} output a solution (e.g., $x_t$) different from
the point where the operator is evaluated (e.g., $x_t + \beta_t(x_t - x_{t-1})$). As a consequence, it is difficult to directly utilize
the strong monotonicity conditions in \eqnok{G_monotone0} or \eqnok{G_monotone}.
Secondly, Nesterov and Scimali~\cite{NesterovScimali06} show that the dual extrapolation
method in \cite{Nest07-2} can be used to solve strongly monotone problems in an optimal way.
However, similar to \cite{Nem05-1}, each iteration of this method requires  two operator evaluations
and two projections. In addition, this method requires the strong monotonicity in \eqnok{strongly_monotone}
rather than the generalized strong monotonicity in \eqnok{G_monotone0} or~\eqnok{G_monotone}.

In order to analyze the convergence behavior of the OE method,
we first need to discuss a few different termination criteria
for the VI problem in \eqnok{VIP}.
If $F$ satisfies the generalized strong monotonicity condition in \eqnok{G_monotone} for some $\mu> 0$, then
the distance to the optimal solution $V(x_k, x^*)$
will be a natural termination criterion. \textcolor{black}{In addition we will use two other  termination criteria. The first termination criterion is called the weak gap, defined as 
\beq \label{def_weak_gap}
\gap(\bar{x}) := \max_{x \in X} \langle F(x),  \bar{x} -x \rangle
\eeq
 for a given $\bar x \in X$. We employ this criterion in the case of standard monotone VIs in Subsection~\ref{sec_MVI}. Our analysis in that subsection applies solely to the case when $X$ is a bounded set. 
The case of an unbounded feasible set is investigated in section of 5 of \cite{MonSva10-1}. In the context of generalized monotone VIs in Subsection ~ \ref{sec_GMVI} we consider  
also the residual of a point as a termination criterion and our analysis applies  also to possibly unbounded feasible sets. }
To this end
let us denote the normal cone of $X$ at $\bar x$ by
\beq \label{def_N_X}
N_X(\bar x) := \{y \in \bbr^n | \langle y, x - \bar x\rangle \le 0, \forall x \in X \}.
\eeq
Noting that $\bar x \in X$ is an optimal solution for problem~\eqnok{VIP} if and only if $F(\bar x) \in -N_X(\bar x)$,
we define the residual of $\bar x$ as
\beq \label{def_res}
\res(\bar x) := \min_{y \in - N_X(\bar x)} \|y - F(\bar x)\|_*.
\eeq
In particular, if $X = \bbr^n$, then $N_X(\bar x) = \{0\}$ and $\res(\bar x) = \|F(\bar x)\|_*$, which is exactly
 the residual of solving the nonlinear equation $F(\bar x) = 0$.

As stated, using these termination criteria, we will establish the convergence of
the OE method applied for solving different VI problems, including the generalized strongly monotone
VI (GSMVI), 
the generalized monotone VI (GMVI),
and 
the standard monotone VI (MVI) 
in Subsections~\ref{sec_GS_VI},~\ref{sec_GMVI} and~\ref{sec_MVI}, respectively.
We will first show in Proposition~\ref{lemma_dist} some important convergence properties associated with the OE method that hold for all these  cases.
Before doing so, we briefly state a well-known technical result (see, e.g., Lemma 3.1 of \cite{LanBook2020}), which, often referred to as the ``three-point lemma", characterizes the optimality condition of problem~\eqnok{deterministic_algorithm_step}.
\begin{lemma} \label{lemma_projection}
	Let $x_{t+1}$ be defined in \eqnok{deterministic_algorithm_step}. Then,
	\beq \label{opt_deter_step}
	\gamma_t \langle  F(x_t) + \lambda_t \big( F(x_t) - F(x_{t-1}) \big) , x_{t+1} - x \rangle + V(x_t, x_{t+1}) \leq V(x_t, x) - V(x_{t+1}, x), \forall x \in X.
	\eeq
\end{lemma}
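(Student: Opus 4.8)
The plan is to obtain \eqnok{opt_deter_step} as the first-order optimality condition of the strongly convex subproblem \eqnok{deterministic_algorithm_step}, rewritten through the Bregman (three-point) identity. Set $g_t := F(x_t) + \lambda_t\big(F(x_t) - F(x_{t-1})\big)$ and let $\phi_t(x) := \gamma_t \langle g_t, x\rangle + V(x_t, x)$, so that $x_{t+1} = \argmin_{x \in X} \phi_t(x)$. The first ingredient is an \emph{exact} second-order expansion of $\phi_t$ around $x_{t+1}$: since $\gamma_t\langle g_t, \cdot\rangle$ is affine and $V(x_t, x) = \omega(x) - \omega(x_t) - \langle \omega'(x_t), x - x_t\rangle$, a direct computation shows that, for every $x \in X$ and every subgradient $\omega'(x_{t+1}) \in \partial\omega(x_{t+1})$,
\[
\phi_t(x) - \phi_t(x_{t+1}) - \langle \gamma_t g_t + \omega'(x_{t+1}) - \omega'(x_t),\, x - x_{t+1}\rangle = V(x_{t+1}, x).
\]
Equivalently, this is the three-point identity $\langle \omega'(x_{t+1}) - \omega'(x_t), x - x_{t+1}\rangle = V(x_t, x) - V(x_t, x_{t+1}) - V(x_{t+1}, x)$ combined with the definition of $\phi_t$.

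Next I would invoke the constrained optimality condition. Since $x_{t+1}$ minimizes the convex function $\phi_t$ over the convex set $X$, we have $0 \in \gamma_t g_t + \partial\omega(x_{t+1}) - \omega'(x_t) + N_X(x_{t+1})$; that is, there is a subgradient $\omega'(x_{t+1}) \in \partial\omega(x_{t+1})$ with $\langle \gamma_t g_t + \omega'(x_{t+1}) - \omega'(x_t),\, x - x_{t+1}\rangle \ge 0$ for all $x \in X$. Plugging precisely this subgradient into the identity above yields $\phi_t(x) \ge \phi_t(x_{t+1}) + V(x_{t+1}, x)$ for all $x \in X$. Substituting back the definition of $\phi_t$, this reads $\gamma_t\langle g_t, x\rangle + V(x_t, x) \ge \gamma_t\langle g_t, x_{t+1}\rangle + V(x_t, x_{t+1}) + V(x_{t+1}, x)$, and regrouping the terms gives $\gamma_t\langle g_t, x_{t+1} - x\rangle + V(x_t, x_{t+1}) \le V(x_t, x) - V(x_{t+1}, x)$, which is exactly \eqnok{opt_deter_step} after unfolding the definition of $g_t$.

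There is no genuine obstacle here — this is a textbook fact (cf. Lemma~3.1 of \cite{LanBook2020}) — and the only point deserving a line of care is the bookkeeping of subgradients of $\omega$: one must use throughout the same subgradient $\omega'(x_t)$ that appears in the definition of $V(x_t, \cdot)$, and select the subgradient $\omega'(x_{t+1})$ at the minimizer so that $\gamma_t g_t + \omega'(x_{t+1}) - \omega'(x_t) \in -N_X(x_{t+1})$. Under the standing assumptions on $\omega$ (in particular when $\omega$ is differentiable on $X$, so that $\omega'$ is single-valued) this is automatic, and the exact identity invoked above holds verbatim.
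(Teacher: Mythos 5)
Your proof is correct and is the standard three-point argument: the paper itself does not prove this lemma but defers to Lemma~3.1 of \cite{LanBook2020}, whose proof is precisely the combination of the exact Bregman identity around $x_{t+1}$ with the first-order optimality condition of the prox-subproblem that you give. The one point of care you flag — choosing the subgradient $\omega'(x_{t+1})$ so that $\gamma_t g_t + \omega'(x_{t+1}) - \omega'(x_t) \in -N_X(x_{t+1})$ — is exactly the right one, and is harmless under the paper's standing assumptions on the distance generating function.
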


Henceforth for a given sequence of iterates $ \{ x_t\} $ and $x \in X$ we will use the notation
\beq \label{def_Delta_V}
\Delta F_t := F(x_t) - F(x_{t-1}) \ \ \mbox{and} \ \ \Delta V_t(x)  :=  V(x_t, x) - V(x_{t+1}, x).
\eeq
\begin{proposition}\label{lemma_dist}
	Let $\{x_t\}$ be generated by Algorithm \ref{alg:deterministic} and $\{\theta_t\} $ a sequence of nonnegative numbers.
 	If the parameters $\{\gamma_t\}$ and $\{\lambda_t\}$ in Algorithm \ref{alg:deterministic} satisfy 
	\begin{align}
	\t_{t+1}\g_{t+1} \lambda_{t+1}= &~ \g_{t} \t_t, \label{eqn:OElambda}\\
	\theta_{t-1} \geq &~ 4 L^2 \theta_{t} \gamma_{t}^2 \lambda_{t}^2\label{eqn:OEGamma_strong}
	\end{align}
	for all $t = 1, \ldots, k$, then for any $x \in X$,
	\begin{align*}
	\tsum_{t=1}^k \big[ \theta_t \left[ \gamma_t \langle F(x_{t+1}) , x_{t+1} - x \rangle + V(x_{t+1},x) \right] \big] - L^2 \theta_k \gamma_k^2 \| x_{k+1} - x \|^2
	\leq \tsum_{t=1}^k \theta_t V(x_t,x).
	\end{align*}
\end{proposition}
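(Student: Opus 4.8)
The plan is to telescope the three-point inequality from Lemma~\ref{lemma_projection}. Applying \eqnok{opt_deter_step} at iteration $t$, multiplying by $\theta_t\ge 0$, and summing over $t=1,\dots,k$ gives
\[
\tsum_{t=1}^k \theta_t\big[\gamma_t\langle F(x_t)+\lambda_t\Delta F_t,\,x_{t+1}-x\rangle + V(x_t,x_{t+1})\big] \;\le\; \tsum_{t=1}^k \theta_t\big[V(x_t,x)-V(x_{t+1},x)\big].
\]
The key manipulation is to replace the operator value $F(x_t)$ used in the projection by $F(x_{t+1})$ --- this is what will later let the downstream results invoke the generalized monotonicity at $x^*$ --- at the cost of first-order error terms that the parameter conditions are designed to absorb. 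Writing $F(x_t)=F(x_{t+1})-\Delta F_{t+1}$ and decomposing $\langle\Delta F_t,x_{t+1}-x\rangle=\langle\Delta F_t,x_t-x\rangle+\langle\Delta F_t,x_{t+1}-x_t\rangle$, the $t$-th operator summand becomes
\[
\gamma_t\langle F(x_{t+1}),x_{t+1}-x\rangle - \gamma_t\langle\Delta F_{t+1},x_{t+1}-x\rangle + \gamma_t\lambda_t\langle\Delta F_t,x_t-x\rangle + \gamma_t\lambda_t\langle\Delta F_t,x_{t+1}-x_t\rangle.
\]

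Multiplying by $\theta_t$ and summing, I would use $\Delta F_1=F(x_1)-F(x_0)=0$ (since $x_0=x_1$) together with the balancing condition \eqnok{eqn:OElambda}, rewritten as $\theta_t\gamma_t\lambda_t=\theta_{t-1}\gamma_{t-1}$, to observe that $\tsum_{t}\theta_t\gamma_t\lambda_t\langle\Delta F_t,x_t-x\rangle$ and $\tsum_t\theta_t\gamma_t\langle\Delta F_{t+1},x_{t+1}-x\rangle$ telescope against each other, leaving only the single boundary term $-\theta_k\gamma_k\langle\Delta F_{k+1},x_{k+1}-x\rangle$. Substituting back, this reduces the claim to showing
\[
\theta_k\gamma_k\langle\Delta F_{k+1},x_{k+1}-x\rangle - \tsum_{t=1}^k\theta_t\gamma_t\lambda_t\langle\Delta F_t,x_{t+1}-x_t\rangle - \tsum_{t=1}^k\theta_t V(x_t,x_{t+1}) \;\le\; L^2\theta_k\gamma_k^2\|x_{k+1}-x\|^2.
\]

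For the boundary term I would apply Cauchy--Schwarz, the Lipschitz estimate $\|\Delta F_{k+1}\|_*\le L\|x_{k+1}-x_k\|$ from \eqnok{Lipschitz}, and Young's inequality to get $\theta_k\gamma_k\langle\Delta F_{k+1},x_{k+1}-x\rangle\le \tfrac14\theta_k\|x_{k+1}-x_k\|^2+L^2\theta_k\gamma_k^2\|x_{k+1}-x\|^2$. For each cross term I would use Cauchy--Schwarz and $\|\Delta F_t\|_*\le L\|x_t-x_{t-1}\|$, then condition \eqnok{eqn:OEGamma_strong} in the form $L\theta_t\gamma_t\lambda_t\le\tfrac12\sqrt{\theta_{t-1}\theta_t}$ followed by AM--GM, obtaining $-\theta_t\gamma_t\lambda_t\langle\Delta F_t,x_{t+1}-x_t\rangle\le\tfrac14\big(\theta_{t-1}\|x_t-x_{t-1}\|^2+\theta_t\|x_{t+1}-x_t\|^2\big)$. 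Finally, using $V(x_t,x_{t+1})\ge\tfrac12\|x_t-x_{t+1}\|^2$ from \eqnok{strong_convex_V} and collecting the coefficient of each squared increment $\|x_{t+1}-x_t\|^2$: the interior increments cancel exactly ($\tfrac14+\tfrac14-\tfrac12=0$), the contribution at index $k$ cancels the $\tfrac14\theta_k\|x_{k+1}-x_k\|^2$ left over from the boundary term, and the remaining mass is nonpositive. Rearranging yields the claim.

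The only real obstacle is the bookkeeping in the telescoping and in matching the Young/AM--GM constants: one must track the index shifts so that condition \eqnok{eqn:OElambda} collapses the first-order error terms to a single boundary term, and then split the remaining errors so that condition \eqnok{eqn:OEGamma_strong} --- which is exactly what converts the weights $L\theta_t\gamma_t\lambda_t$ into $\tfrac12\sqrt{\theta_{t-1}\theta_t}$ --- lets the squared increments be absorbed with no leftover positive term. There is no conceptual difficulty beyond these careful estimates.
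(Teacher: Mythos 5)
Your proposal is correct and follows essentially the same route as the paper's proof: the same three-point lemma, the same rewriting $F(x_t)=F(x_{t+1})-\Delta F_{t+1}$ with the splitting of $\langle\Delta F_t,x_{t+1}-x\rangle$, the same telescoping via \eqref{eqn:OElambda}, and the same Lipschitz/Young estimates with the $\tfrac14$-$\tfrac14$-$\tfrac12$ split governed by \eqref{eqn:OEGamma_strong}. The only cosmetic difference is that you collect all squared-increment coefficients at the end, whereas the paper first lower-bounds the aggregate term $Q_k$ by $\tfrac{\theta_k}{4}\|x_k-x_{k+1}\|^2$ and then absorbs the boundary term.
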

\begin{proof} 
   It follows from \eqnok{opt_deter_step} after multiplying with $\theta_t$ \textcolor{black}{and invoking the definitions in \eqref{def_Delta_V}} that
		\begin{align}
	\nonumber
	\theta_t  \Delta V_t(x)  & \geq  \theta_t \gamma_t \langle F(x_{t+1}) , x_{t+1} - x \rangle  - 
	\theta_t \gamma_t \langle \Delta F_{t+1}, x_{t+1} - x \rangle + \theta_t \gamma_t \lambda_t  \langle\Delta F_t , x_{t} - x \rangle \\
	&\quad + \theta_t \gamma_t \lambda_t \langle\Delta F_t , x_{t+1} - x_t \rangle + \theta_t V(x_t, x_{t+1}).
	\label{eqn:1}
	\end{align}
	Summing up \eqref{eqn:1} from $t =  1$ to $k $, invoking \eqref{eqn:OElambda} and noting $x_1 = x_0$,  we obtain
		\begin{align}
		\tsum_{t=1}^k    \theta_t \Delta V_t(x)   \geq   \tsum_{t=1}^k \big[ \theta_t \gamma_t \langle F(x_{t+1}) , x_{t+1} - x \rangle \big] 
		- \theta_k \gamma_k \langle \Delta F_{k+1} , x_{k+1} - x \rangle   + Q_k, \label{eqn:common_bnd_VI}
	\end{align}	
	where 
	\beq \label{def_Q_t}
	 Q_k := \tsum_{t=1}^k  
	\big[ \theta_t \gamma_t \lambda_t \langle\Delta F_t, x_{t+1} - x_t \rangle + \theta_t V(x_t, x_{t+1}) \big].
	\eeq
	Using \eqnok{strong_convex_V}  and the Lipschitz condition \eqref{Lipschitz} we can lower bound the term $Q_k$ as follows:
		\begin{align*}
		Q_k&\geq     \tsum_{t=1}^k  \big[ -   \theta_t \gamma_t \lambda_t L \|   x_t - x_{t-1} \| 
		\|   x_{t+1} - x_t \|  +   \tfrac{\theta_t}{2} \|  x_t - x_{t+1} \|^2  \big] \\
		& \geq   \tsum_{t=1}^k  \big[-    \theta_t \gamma_t \lambda_t L \|   x_t - x_{t-1} \| 
		\|   x_{t+1} - x_t \|  +   \tfrac{\theta_t}{4} \|  x_t - x_{t+1} \|^2 +   \tfrac{\theta_{t-1} }{4} \|  x_t - x_{t-1} \|^2  \big]  +       \tfrac{ \theta_k}{4} \|  x_k - x_{k+1} \|^2  \\
		&\geq    \tfrac{\theta_k }{4} \|  x_k - x_{k+1} \|^2,
		\end{align*}
	where the last step follows by employing \eqref{eqn:OEGamma_strong}. Hence
	\begin{align*}
		\tsum_{t=1}^k    \theta_t \Delta V_t(x)  &\geq   \tsum_{t=1}^k \big[ \theta_t \gamma_t \langle F(x_{t+1}) , x_{t+1} - x \rangle \big]
		- \theta_k \gamma_k \langle \Delta F_{k+1} , x_{k+1} - x \rangle   +  \tfrac{\theta_k }{4} \|  x_k - x_{k+1} \|^2.
	\end{align*}
	Using the fact that 
	\begin{eqnarray*}
		- \theta_k \gamma_k \langle \Delta F_{k+1} , x_{k+1} - x \rangle  +
		\tfrac{\theta_k }{4} \|  x_k - x_{k+1} \|^2  
		&\geq& 
		- \theta_k \gamma_k L \|  x_k - x_{k+1} \|  \|  x - x_{k+1} \| +
		\tfrac{\theta_k }{4} \|  x_k - x_{k+1} \|^2  \\
		&\geq&  - L^2 \theta_k \gamma_k^2  \|  x - x_{k+1} \|^2	
	\end{eqnarray*}
	in the above inequality, we obtain the desired result.
\end{proof}

\subsection{Convergence for GSMVIs} \label{sec_GS_VI}
In this subsection, we consider the generalized strongly monotone VIs which satisfy \eqnok{Lipschitz} and \eqnok{G_monotone}
for some $\mu > 0$.

\begin{theorem}\label{the_linear_convergence}
Assume that \eqnok{G_monotone} holds for some $\mu > 0$.
	Let $x^*$ be a solution of problem  \eqref{VIP}, and suppose that
	 the parameters $\{\g_t\}$, $\{\t_t\}$, $\{\lambda_t\}$  satisfy \eqref{eqn:OElambda},  \eqref{eqn:OEGamma_strong} and additionally   
	\begin{align}
	\t_{t} \leq  &~ \t_{t-1} (1 + 2 \mu \gamma_{t-1}), \label{eqn:4}\\
	L^2 \g_k^2 \leq &~ \tfrac{1}{2}.\label{eqn:5}     
	\end{align}	
		Then for all $k \geq 1 $,
	\beq
	\label{linear_VI_result_nps}
	2 \mu \theta_k \gamma_k  V(x_{k+1},x^*) 
	\leq  \theta_1 V(x_1,x^*).
	\eeq
	In particular by setting for $t =1, \ldots, k$,
	\begin{equation}
	\label{deterministic_parameters}
	\gamma_t = \tfrac{1}{2 L},~~\lambda_t = \tfrac{\theta_{t-1} \gamma_{t-1}}{\theta_t \gamma_t } =  ( \tfrac{\mu}{L } +1 )^{-1} ,~~~\mbox{and} ~~~\theta_t = ( \tfrac{\mu}{L } +1 )^t,
	\end{equation}
	we have
	\beq \label{linear_VI_result}
	V(x_{k+1},x^*) 
	\leq  \tfrac{L}{\mu}  ( \tfrac{L }{L +  \mu})^{k-1} V(x_1,x^*).
	\eeq
\end{theorem}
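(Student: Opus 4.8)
The plan is to derive the statement directly from Proposition~\ref{lemma_dist}, which already carries the analytic weight: one only has to instantiate it at $x^*$, invoke generalized strong monotonicity, perform a single index shift, and absorb a leftover quadratic term.

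First I would apply Proposition~\ref{lemma_dist} with $x = x^*$ (its hypotheses \eqref{eqn:OElambda} and \eqref{eqn:OEGamma_strong} are assumed here). Since the $\gamma_t$ are nonnegative and \eqref{G_monotone} gives $\langle F(x_{t+1}), x_{t+1}-x^*\rangle \ge 2\mu V(x_{t+1},x^*)$, the left-hand side of the proposition's conclusion is at least $\sum_{t=1}^k \theta_t(1+2\mu\gamma_t)V(x_{t+1},x^*) - L^2\theta_k\gamma_k^2\|x_{k+1}-x^*\|^2$, so that
\[
\sum_{t=1}^k \theta_t(1+2\mu\gamma_t)V(x_{t+1},x^*) - L^2\theta_k\gamma_k^2\|x_{k+1}-x^*\|^2 \le \sum_{t=1}^k \theta_t V(x_t,x^*).
\]
Next I would split off the $t=k$ summand on the left and re-index the rest by $s=t+1\in\{2,\dots,k\}$, obtaining $\sum_{s=2}^k \theta_{s-1}(1+2\mu\gamma_{s-1})V(x_s,x^*)$. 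By \eqref{eqn:4} one has $\theta_{s-1}(1+2\mu\gamma_{s-1})\ge\theta_s$, so this dominates $\sum_{s=2}^k\theta_s V(x_s,x^*)$, which is exactly the $t\ge 2$ portion of the right-hand side and cancels, leaving
\[
\theta_k(1+2\mu\gamma_k)V(x_{k+1},x^*) - L^2\theta_k\gamma_k^2\|x_{k+1}-x^*\|^2 \le \theta_1 V(x_1,x^*).
\]
Finally, \eqref{strong_convex_V} gives $\|x_{k+1}-x^*\|^2\le 2V(x_{k+1},x^*)$, and \eqref{eqn:5} gives $2L^2\gamma_k^2\le 1$, hence $L^2\theta_k\gamma_k^2\|x_{k+1}-x^*\|^2\le\theta_k V(x_{k+1},x^*)$; subtracting this from the coefficient $\theta_k(1+2\mu\gamma_k)$ leaves $2\mu\theta_k\gamma_k V(x_{k+1},x^*)$ on the left, which is \eqref{linear_VI_result_nps}.

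For the concrete choice \eqref{deterministic_parameters} I would simply verify the four hypotheses: \eqref{eqn:OElambda} and \eqref{eqn:4} hold with equality, \eqref{eqn:5} reads $1/4\le 1/2$, and \eqref{eqn:OEGamma_strong} reduces to $(1+\mu/L)^{t-1}\ge(1+\mu/L)^{t-2}$. Substituting $\theta_k=(1+\mu/L)^k$, $\gamma_k=1/(2L)$ and $\theta_1=1+\mu/L$ into \eqref{linear_VI_result_nps} and dividing through by the leading factor $\tfrac{\mu}{L}(1+\mu/L)^k$ yields $V(x_{k+1},x^*)\le(L/\mu)(L/(L+\mu))^{k-1}V(x_1,x^*)$, i.e.\ \eqref{linear_VI_result}.

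I do not expect a genuine obstacle, since Proposition~\ref{lemma_dist} has done the hard part; the two places needing attention are the order of operations in the telescoping step (the last summand must be isolated \emph{before} re-indexing, so that the shifted sum lines up with the right-hand side and cancels exactly) and the sign bookkeeping when the negative term $-L^2\theta_k\gamma_k^2\|x_{k+1}-x^*\|^2$ is dominated using the strong-convexity lower bound on $V$ together with \eqref{eqn:5}.
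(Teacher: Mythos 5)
Your proposal is correct and follows essentially the same route as the paper: instantiate Proposition~\ref{lemma_dist} at $x=x^*$, apply \eqref{G_monotone} to obtain the coefficient $\theta_t(1+2\mu\gamma_t)$, use \eqref{eqn:4} to telescope all but the $t=k$ term, and absorb the quadratic remainder via \eqref{strong_convex_V} and \eqref{eqn:5}; the parameter verification and the final algebra also match. No gaps.
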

\begin{proof}
	By combining Proposition~\ref{lemma_dist}, \eqnok{G_monotone} and \eqnok{eqn:4}, we obtain
	\begin{align*}
	\tsum_{t=1}^k \theta_t (2  \mu \gamma_t + 1) V(x_{t+1},x^*)   - L^2 \theta_k \gamma_k^2 \| x_{k+1} - x^* \|^2
	&\leq      \theta_1 V(x_1,x^*)  + \tsum_{t=2}^k \theta_t V(x_t,x^*) \\
	&\leq  \theta_1 V(x_1,x^*)  + \tsum_{t=2}^k 	  \theta_{t-1} (2 \mu \gamma_{t-1} + 1) V(x_t,x^*),
	\end{align*}
	which together with  \eqnok{strong_convex_V} and \eqnok{eqn:5}  lead to the desired inequality in \eqnok{linear_VI_result_nps}. 
	The choice of algorithmic parameters in \eqnok{deterministic_parameters} is compatible with the conditions \eqnok{eqn:OElambda}, \eqnok{eqn:OEGamma_strong},  \eqnok{eqn:4},
	and \eqnok{eqn:5} respectively, thus by substituting
	into the above relation we obtain the linear rate of convergence in \eqnok{linear_VI_result} for Algorithm \ref{alg:deterministic}.
\end{proof}

\vgap

In view of Theorem~\ref{the_linear_convergence},
the number of OE iterations required to find a solution $\bar x \in X$ s.t. $V(\bar x,x^*) \le \epsilon$ for GSMVIs is bounded
 by ${\cal O} \{L/\mu \log(1/\epsilon)\}$. This bound appears to be optimal and significantly outperforms the
 ${\cal O} \{(L/\mu)^2 \log(1/\epsilon)\}$ iteration complexity bound possessed by
 the projected operator (gradient) method in terms of their dependence on the condition
 number $L/\mu$ (see \cite{NesterovScimali06} for more discussions). 
 To the best of our knowledge, this is the first time this optimal complexity
 has been obtained for GSMVIs, while \cite{NesterovScimali06} established 
 a similar result 
 for strongly monotone VIs by using a more involved algorithmic scheme.
 
\subsection{Convergence for GMVI} \label{sec_GMVI}
In this subsection, we consider generalized  monotone VIs which satisfy \eqnok{Lipschitz} and \eqnok{G_monotone}
with $\mu = 0$. Our goal is to show the OE method is robust in the sense that it converges even if the
modulus $\mu$ is rather small. Throughout this subsection we assume that the distance generating function $\o$ is differentiable and
its gradient is $L_\o$ Lipschitz continuous, i.e.,
\beq \label{omega_Lip}
\|\nabla \o(x_1) - \nabla \o(x_2)\|_* \le L_\o \|x_1 - x_2\|, \ \ \forall x_1, x_2 \in X.
\eeq
We define the output solution $x_{R+1}$ of OE method as
\beq \label{choose_the_best}
\|x_{R+1} - x_R\|^2 + \|x_R - x_{R-1}\|^2 = \min_{t=1, \ldots, k} \left( \|x_{t+1} - x_t\|^2 + \|x_t - x_{t-1}\|^2\right).
\eeq
Lemma~\ref{lemma_bnd_res} provides a technical result regarding
the relation between the residual of $x_{R+1}$ and the summation of squared distances $\tsum_{t=1}^k \|x_{t+1}- x_t\|^2$.
\begin{lemma} \label{lemma_bnd_res}
Let $x_t$, $t=1, \ldots, k+1$, be generated by the OE method in Algorithm~\ref{alg:deterministic}.
Assume that $x_{R+1}$ is chosen according to \eqnok{choose_the_best}. If
\beq \label{cond_rel_two_iterate}
\tsum_{t=1}^k \|x_{t+1}-x_t\|^2 \le \delta,
\eeq
then 
\[
 \res(x_{R+1}) \le2 (L + \tfrac{L_\o}{\gamma_R} + L \lambda_R)\tfrac{\sqrt{2 \delta}}{\sqrt{k}},
\]
where $\res(\cdot)$ is defined in \eqnok{def_res}.
\end{lemma}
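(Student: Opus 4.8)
The plan is to read off an explicit element $y$ of $-N_X(x_{R+1})$ from the optimality condition of the projection subproblem \eqnok{deterministic_algorithm_step} at iteration $t=R$, to bound $\|y-F(x_{R+1})\|_*$ via the triangle inequality together with the Lipschitz conditions \eqnok{Lipschitz} and \eqnok{omega_Lip}, and then to control the resulting distances through the selection rule \eqnok{choose_the_best}.

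First I would write the first-order optimality condition for \eqnok{deterministic_algorithm_step} with $t=R$. Since the subproblem objective is convex and differentiable in $x$ with $\nabla_x V(x_R,\cdot)=\nabla\o(\cdot)-\nabla\o(x_R)$ (recall $\o$ is assumed differentiable in this subsection), this condition reads
\[
-\Big[\g_R\big(F(x_R)+\lambda_R(F(x_R)-F(x_{R-1}))\big)+\nabla\o(x_{R+1})-\nabla\o(x_R)\Big]\in N_X(x_{R+1}).
\]
Dividing by $\g_R>0$ and using that $N_X(x_{R+1})$ is a cone, the point
\[
y:=F(x_R)+\lambda_R\big(F(x_R)-F(x_{R-1})\big)+\tfrac{1}{\g_R}\big(\nabla\o(x_{R+1})-\nabla\o(x_R)\big)
\]
lies in $-N_X(x_{R+1})$, so by \eqnok{def_res} we have $\res(x_{R+1})\le\|y-F(x_{R+1})\|_*$.

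Next I would expand $y-F(x_{R+1})=\big(F(x_R)-F(x_{R+1})\big)+\lambda_R\big(F(x_R)-F(x_{R-1})\big)+\tfrac{1}{\g_R}\big(\nabla\o(x_{R+1})-\nabla\o(x_R)\big)$ and apply the triangle inequality, the $L$-Lipschitz continuity of $F$ in \eqnok{Lipschitz}, and the $L_\o$-Lipschitz continuity of $\nabla\o$ in \eqnok{omega_Lip} to obtain
\[
\res(x_{R+1})\le\big(L+\tfrac{L_\o}{\g_R}\big)\|x_{R+1}-x_R\|+L\lambda_R\|x_R-x_{R-1}\|\le\big(L+\tfrac{L_\o}{\g_R}+L\lambda_R\big)\big(\|x_{R+1}-x_R\|+\|x_R-x_{R-1}\|\big).
\]
To finish, I would bound the last factor via \eqnok{choose_the_best}: the minimum is at most the average, hence $\|x_{R+1}-x_R\|^2+\|x_R-x_{R-1}\|^2\le\tfrac1k\tsum_{t=1}^k\big(\|x_{t+1}-x_t\|^2+\|x_t-x_{t-1}\|^2\big)$; since $x_0=x_1$, reindexing gives $\tsum_{t=1}^k\|x_t-x_{t-1}\|^2=\tsum_{t=1}^{k-1}\|x_{t+1}-x_t\|^2\le\d$ by \eqnok{cond_rel_two_iterate}, so $\|x_{R+1}-x_R\|^2+\|x_R-x_{R-1}\|^2\le 2\d/k$. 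Then $\|x_{R+1}-x_R\|+\|x_R-x_{R-1}\|\le 2\sqrt{\|x_{R+1}-x_R\|^2+\|x_R-x_{R-1}\|^2}\le 2\sqrt{2\d}/\sqrt k$, and substituting yields exactly the claimed estimate. The only step requiring care is the first one: correctly identifying that the prox term contributes the difference $\nabla\o(x_{R+1})-\nabla\o(x_R)$ to the normal-cone element, which is precisely why the smoothness assumption \eqnok{omega_Lip} is imposed; everything afterward is the triangle inequality and index bookkeeping.
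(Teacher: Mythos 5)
Your proposal is correct and follows essentially the same route as the paper: identify the normal-cone element from the optimality condition of the projection step at $t=R$ (the paper's $\delta_R$ equals your $y-F(x_{R+1})$), bound its dual norm by $(L+\tfrac{L_\o}{\gamma_R})\|x_{R+1}-x_R\|+L\lambda_R\|x_R-x_{R-1}\|$ via \eqnok{Lipschitz} and \eqnok{omega_Lip}, and use the selection rule \eqnok{choose_the_best} together with \eqnok{cond_rel_two_iterate} to get $\|x_{R+1}-x_R\|^2+\|x_R-x_{R-1}\|^2\le 2\delta/k$. Your handling of the final factor of $2$ (via $a+b\le 2\sqrt{a^2+b^2}$) is marginally different from the paper's use of the max of the two distances, but both yield the stated bound.
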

\begin{proof}
Observe that by the optimality condition of \eqnok{deterministic_algorithm_step}, we have
\beq \label{opt_det_step}
\langle F(x_{R+1}) + \delta_R, x - x_{R+1} \rangle \ge 0 \ \ \forall x \in X,
\eeq
with
\[
\delta_R := F(x_R) - F(x_{R+1}) + \lambda_R [F(x_R) - F(x_{R-1})] + \tfrac{1}{\gamma_R} [\nabla \o(x_{R+1}) - \nabla \o(x_{R})].
\]
By \eqnok{Lipschitz} and \eqnok{omega_Lip}, we have
\begin{align}
\|\delta_R\|_* &\le \|F(x_R) - F(x_{R+1}) \|_* + \lambda_R \|F(x_R) - F(x_{R-1})\|_* +   \tfrac{1}{\gamma_R} \|\nabla \o(x_{R+1}) - \nabla \o(x_{R})\|_* \nn \\
&\le (L + \tfrac{L_\o}{\gamma_R})\|x_{R+1} - x_R\| + L \lambda_R \|x_R - x_{R-1}\|. \label{bnd_delta_R}
\end{align}
It follows from \eqnok{cond_rel_two_iterate} that
\begin{align}
\tsum_{t=1}^k \left(\|x_{t+1} - x_t\|^2 + \|x_t - x_{t-1}\|^2 \right) \le 2 \tsum_{t=1}^2 \|x_{t+1} - x_t\|^2 \le 2 \delta.
\end{align}
The previous conclusion clearly implies that
$
\|x_{R+1} - x_R\|^2 + \|x_R - x_{R-1}\|^2 \le \tfrac{2 \delta}{k}
$
and hence that 
\beq \label{bnd_two_diff}
\max\{\|x_{R+1} - x_R\|, \|x_{R-1} - x_R\|\} \le \tfrac{\sqrt{2 \delta}}{\sqrt{k}}.
\eeq
We then conclude from the definition of $\res(\cdot)$ 
in \eqnok{def_res} and relations
 \eqnok{opt_det_step}, \eqnok{bnd_delta_R}, and \eqnok{bnd_two_diff} that
 \[
 \res(x_{R+1}) = \|\delta_R\|_* \le 2 (L + \tfrac{L_\o}{\gamma_R} + L \lambda_R)\tfrac{\sqrt{2 \delta}}{\sqrt{k}}.
 \]
\end{proof}

We are now ready to show the convergence of the OE method for GMVIs.
\begin{theorem}\label{lemma_dist_no_strong_monotone}
	Let $\{x_t\}$ be generated by Algorithm \ref{alg:deterministic} and $\{\theta_t\} $ be a sequence of nonnegative numbers.
	If the parameters $\{\gamma_t\}$ and $\{\lambda_t\}$ in Algorithm \ref{alg:deterministic} satisfy \eqnok{eqn:OElambda} and
	\beq \label{eqn:OEGamma_strong_no}
	\theta_{t-1} \geq  9 L^2 \theta_{t} \gamma_{t}^2 \lambda_{t}^2 
	\eeq
	for all $t = 1, \ldots, k$, then 
	\begin{align}
		 \tfrac{1 }{3} \tsum_{t=1}^k [\theta_t V(x_t, x_{t+1})] + \tfrac{\theta_k}{2} 
		(1-L^2\gamma_k^2) \| x_{k+1} - x^* \|^2 \le \theta_1 V(x_1,x^*). \label{GMVI_result}
	\end{align}
	In particular, if 
	\beq \label{stepsizeforGMV_no}
	\theta_t = 1, \lambda_t = 1 \ \ \mbox{and} \ \ \gamma_t = \tfrac{1}{3L},
	\eeq
	and $x_{R+1}$ is chosen according to \eqnok{choose_the_best},
	then 
\beq \label{bnd_res_no}
 \res(x_{R+1}) \le 4 L (2 + 3 L_\o )\tfrac{\sqrt{3 V(x_1,x^*)}}{\sqrt{k}}.
\eeq
where $\res(\cdot)$ is defined in \eqnok{def_res}.
\end{theorem}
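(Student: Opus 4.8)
The plan is to imitate the proof of Proposition~\ref{lemma_dist}, but to spend the ``step-length'' terms $\theta_t V(x_t,x_{t+1})$ more frugally, so that a positive fraction of $\tsum_{t=1}^k\theta_t V(x_t,x_{t+1})$ survives in the final estimate. First I would start from inequality \eqnok{eqn:common_bnd_VI} in that proof, which only invokes \eqnok{eqn:OElambda} and $x_0=x_1$ and is therefore available here verbatim. Setting $x=x^*$ and using \eqnok{G_monotone} with $\mu=0$ to drop the nonnegative terms $\theta_t\gamma_t\langle F(x_{t+1}),x_{t+1}-x^*\rangle$, then telescoping the left side $\tsum_{t=1}^k\theta_t\Delta V_t(x^*)$ (immediate for the constant $\{\theta_t\}$ of \eqnok{stepsizeforGMV_no}), one arrives at
\[
\theta_1 V(x_1,x^*)\ \ge\ \theta_k V(x_{k+1},x^*)\ -\ \theta_k\gamma_k\langle\Delta F_{k+1},x_{k+1}-x^*\rangle\ +\ Q_k ,
\]
with $Q_k$ as in \eqnok{def_Q_t}. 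Everything then reduces to a sharpened lower bound on $Q_k$ and to absorbing the single boundary term $-\theta_k\gamma_k\langle\Delta F_{k+1},x_{k+1}-x^*\rangle$.

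For the bound on $Q_k$, using \eqnok{Lipschitz}, Cauchy--Schwarz, and $\Delta F_1=0$ (as $x_0=x_1$), each summand is $\ge -\theta_t\gamma_t\lambda_t L\|x_t-x_{t-1}\|\,\|x_{t+1}-x_t\|+\theta_t V(x_t,x_{t+1})$. Here I would split $\theta_t V(x_t,x_{t+1})=\tfrac13\theta_t V(x_t,x_{t+1})+\tfrac23\theta_t V(x_t,x_{t+1})$, set the first piece aside, and bound the second by $\tfrac13\theta_t\|x_t-x_{t+1}\|^2$ via \eqnok{strong_convex_V}; writing this as $\tfrac16\theta_t\|x_t-x_{t+1}\|^2+\tfrac16\theta_t\|x_t-x_{t+1}\|^2$ and re-indexing one half, an AM--GM bound on the cross terms closes exactly when $\theta_{t-1}\ge 9L^2\theta_t\gamma_t^2\lambda_t^2$ --- which is why \eqnok{eqn:OEGamma_strong_no} carries the constant $9$ in place of the $4$ in \eqnok{eqn:OEGamma_strong}. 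What remains is $Q_k\ge\tfrac13\tsum_{t=1}^k\theta_t V(x_t,x_{t+1})+c\,\theta_k\|x_k-x_{k+1}\|^2$ for an absolute $c>0$. For the boundary term, \eqnok{Lipschitz} and Cauchy--Schwarz give $-\theta_k\gamma_k\langle\Delta F_{k+1},x_{k+1}-x^*\rangle\ge-\theta_k\gamma_k L\|x_{k+1}-x_k\|\,\|x_{k+1}-x^*\|$, and a completion-of-square/Young step pitting this against the $c\,\theta_k\|x_k-x_{k+1}\|^2$ residual and against $\theta_k V(x_{k+1},x^*)\ge\tfrac{\theta_k}{2}\|x_{k+1}-x^*\|^2$ leaves $\tfrac{\theta_k}{2}(1-L^2\gamma_k^2)\|x_{k+1}-x^*\|^2$ (the precise absolute constant multiplying $L^2\gamma_k^2$ being an artifact of the chosen weights and irrelevant in the sequel). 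Rearranging gives \eqnok{GMVI_result}.

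For the concrete estimate, I would substitute \eqnok{stepsizeforGMV_no} and check compatibility with the hypotheses: \eqnok{eqn:OElambda} holds since all three factors are constant ($\theta_t\gamma_t\lambda_t=\theta_{t-1}\gamma_{t-1}$), and \eqnok{eqn:OEGamma_strong_no} becomes the equality $1\ge 9L^2\cdot\tfrac{1}{9L^2}\cdot 1$. Since $L^2\gamma_k^2=\tfrac19<1$, the $\|x_{k+1}-x^*\|^2$ term in \eqnok{GMVI_result} is nonnegative and may be discarded, and \eqnok{strong_convex_V} turns $\tfrac13\tsum_{t=1}^k V(x_t,x_{t+1})\le V(x_1,x^*)$ into $\tsum_{t=1}^k\|x_{t+1}-x_t\|^2\le 6V(x_1,x^*)$. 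Invoking Lemma~\ref{lemma_bnd_res} with $\delta=6V(x_1,x^*)$, $\gamma_R=\tfrac{1}{3L}$ and $\lambda_R=1$ yields $\res(x_{R+1})\le 2\big(L+3LL_\o+L\big)\tfrac{\sqrt{12V(x_1,x^*)}}{\sqrt{k}}=4L(2+3L_\o)\tfrac{\sqrt{3V(x_1,x^*)}}{\sqrt{k}}$, which is \eqnok{bnd_res_no}.

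I expect the main obstacle to be the sharpened $Q_k$ estimate: one must choose the split of the $V(x_t,x_{t+1})$ terms so that (a) a clean $\tfrac13$-fraction is retained for the final bound, (b) the re-indexed AM--GM on the operator-extrapolation cross terms closes under a condition of the form $\theta_{t-1}\gtrsim L^2\theta_t\gamma_t^2\lambda_t^2$, and (c) enough positive residual in $\|x_k-x_{k+1}\|^2$ is left to neutralize the boundary term $-\theta_k\gamma_k\langle\Delta F_{k+1},x_{k+1}-x^*\rangle$; reconciling these competing demands is what pins down the constant $9$ (and the $\tfrac13$) in the statement. The boundary manipulation and the final arithmetic with \eqnok{stepsizeforGMV_no} and Lemma~\ref{lemma_bnd_res} are then routine.
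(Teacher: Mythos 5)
Your proposal is correct and follows essentially the same route as the paper's proof: start from \eqnok{eqn:common_bnd_VI}, split each $\theta_t V(x_t,x_{t+1})$ into a retained third and two sixths of $\|x_t-x_{t+1}\|^2$ that absorb the extrapolation cross terms under \eqnok{eqn:OEGamma_strong_no} (whence the constant $9$), handle the single boundary term by Young's inequality against the leftover $\tfrac{\theta_k}{6}\|x_k-x_{k+1}\|^2$, and finish via Lemma~\ref{lemma_bnd_res} with $\delta=6V(x_1,x^*)$. The final arithmetic yielding \eqnok{bnd_res_no} matches the paper exactly.
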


\begin{proof} 
Observe that  \eqnok{eqn:common_bnd_VI}  still holds. However,
we will bound $Q_k$ in \eqnok{def_Q_t}  differently from the proof of Proposition~\ref{lemma_dist}.
By spliting $V(x_t, x_{t+1}) \ge \|x_{t+1} - x_t\|/2$ into three equal terms in $Q_k$
we obtain
\begin{eqnarray*}
		Q_k	&\geq&   -  \tsum_{t=1}^k    \left[  \theta_t \gamma_t \lambda_t L \|   x_t - x_{t-1} \| 
		\|   x_{t+1} - x_t \|  +  \theta_t V(x_t, x_{t+1})\right] \\
		 &=&  \tsum_{t=1}^k  \left[ -    \theta_t \gamma_t \lambda_t L \|   x_t - x_{t-1} \| 
\|   x_{t+1} - x_t \|  +   \tfrac{\theta_t}{6} \|  x_t - x_{t+1} \|^2 +   \tfrac{ \theta_{t-1}}{6} \|  x_t - x_{t-1} \|^2    \right]
+    \tfrac{\theta_k}{6} \|  x_k - x_{k+1} \|^2 \\ &&+  \tsum_{t=1}^k   \tfrac{\theta_t }{3} V(x_t, x_{t+1}) \\
&\ge& \tfrac{\theta_k}{6} \|  x_k - x_{k+1} \|^2 +  \tsum_{t=1}^k    \tfrac{\theta_t }{3} V(x_t, x_{t+1}).
	\end{eqnarray*} 
	Here, the first inequality follows from \eqref{Lipschitz} and \eqnok{strong_convex_V}, and
	the second inequality 
	follows
	from \eqnok{eqn:OEGamma_strong_no}. Pluging the above bound into \eqnok{eqn:common_bnd_VI} with $x = x^*$ and 
	using the fact $ \langle F(x_{t+1}) , x_{t+1} - x^* \rangle \ge 0$ due to \eqnok{G_monotone}, we have
	\begin{eqnarray*}
	\tsum_{t=1}^k    \theta_t \Delta V_t(x^*)  & \geq &  \tsum_{t=1}^k \big[ \theta_t \gamma_t \langle F(x_{t+1}) , x_{t+1} - x^* \rangle \big] 
		- \theta_k \gamma_k \langle \Delta F_{k+1} , x_{k+1} - x^* \rangle\\
		&&   + \tfrac{\theta_k}{6} \|  x_k - x_{k+1} \|^2 +  \tsum_{t=1}^k    \tfrac{\theta_t }{3} V(x_t, x_{t+1})\\
		&\ge& - \theta_k \gamma_k \langle \Delta F_{k+1} , x_{k+1} - x^* \rangle + \tfrac{\theta_k}{6} \|  x_k - x_{k+1} \|^2 +  \tsum_{t=1}^k    \tfrac{\theta_t }{3} V(x_t, x_{t+1}).
	\end{eqnarray*}
	Observing that
	\begin{eqnarray*}
		- \theta_k \gamma_k \langle \Delta F_{k+1} , x_{k+1} - x^* \rangle  +
		\tfrac{\theta_k }{6} \|  x_k - x_{k+1} \|^2  
		&\geq& 
		- \theta_k \gamma_k L \|  x_k - x_{k+1} \|  \|  x^* - x_{k+1} \| +
		\tfrac{\theta_k }{6} \|  x_k - x_{k+1} \|^2  \\
		&\geq&  - \tfrac{3 L^2}{2} \theta_k \gamma_k^2  \|  x^* - x_{k+1} \|^2,
	\end{eqnarray*}
	we then conclude from the previous inequality that
	\begin{eqnarray*}
	\tsum_{t=1}^k \big\{  \theta_t \big[ V(x_{t+1},x^*) + \tfrac{1 }{3} V(x_t, x_{t+1})\big]\big\} - \tfrac{3}{2} 
		L^2 \theta_k \gamma_k^2 \| x_{k+1} - x^* \|^2
	\leq \tsum_{t=1}^k \theta_t V(x_t,x^*),
	\end{eqnarray*}
	or equivalently,
	\[
	 \tfrac{1 }{3} \tsum_{t=1}^k [\theta_t V(x_t, x_{t+1})] + \theta_k V(x_{k+1},x^*)  - \tfrac{3}{2} 
		L^2 \theta_k \gamma_k^2 \| x_{k+1} - x^* \|^2 \le \theta_1 V(x_1,x^*).
	\]
	The result in \eqnok{GMVI_result} then follows from the above inequality and \eqnok{strong_convex_V}.
	Moreover, by choosing the parameter setting in \eqnok{stepsizeforGMV_no}, we have
	$
	\tsum_{t=1}^k   \tfrac{1}{3}V(x_t, x_{t+1}) \leq V(x_1,x^*),
 	$
	which implies that
	$
	\tsum_{t=1}^k   \| x_t -  x_{t+1}\|^2 \leq 6 V(x_1,x^*).
	$
	The result in \eqnok{bnd_res_no} immediately follows from the previous conclusion and Lemma~\ref{lemma_bnd_res}.
\end{proof}

\vgap

In view of Theorem~\ref{lemma_dist_no_strong_monotone}, the OE method
can find a solution $\bar x \in X$ s.t. $\res(\bar x) \le \epsilon$ in ${\cal O} (1/\epsilon^2)$
iterations for GMVIs. 

\subsection{Convergence for MVIs} \label{sec_MVI}
In this subsection, we show that  the OE method 
can achieve the optimal rate of convergence for solving standard monotone VIs,
for which $F$ satisifies
\beq \label{monotone}
\langle F(x_1) - F(x_2), x_1 - x_2 \rangle \ge 0, \ \ \forall x_1, x_2 \in X.
\eeq
\textcolor{black}{In this subsection it is assumed that the set $X$ is bounded.}
\begin{theorem} \label{the_monotone}
Suppose that \eqnok{monotone} holds. Let $\{x_t\}$ be generated by Algorithm \ref{alg:deterministic} and
denote
\beq \label{def_xbar}
\bar{x}_{k+1} := \tfrac{\tsum_{t=1}^k \gamma_t \theta_t x_{t+1}}{\tsum_{t=1}^k \gamma_t \theta_t }.
\eeq
If \eqref{eqn:OElambda}, \eqref{eqn:OEGamma_strong} and  \eqref{eqn:5} hold and $\theta_t \leq \theta_{t-1}$, then
$$
\gap(\bar{x}_{k+1}) 
\leq  \tfrac{\theta_1}{\tsum_{t=1}^k \theta_t  \gamma_t} \max_{x \in X} V(x_1,x).
$$
In particular, If $\gamma_t = \tfrac{1}{2 L}$, $\lambda_t = 1$ and $\theta_t =  1$,
then
$$
\gap(\bar{x}_{k+1}) 
\leq \tfrac{2L}{k} \max_{x \in X} V(x_1,x)
$$

\end{theorem}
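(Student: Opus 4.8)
The plan is to reduce everything to Proposition~\ref{lemma_dist}, which applies verbatim here since \eqnok{eqn:OElambda} and \eqnok{eqn:OEGamma_strong} are among the hypotheses of the theorem. First I would fix an arbitrary test point $x \in X$ and invoke monotonicity \eqnok{monotone} with the pair $(x_{t+1}, x)$ to get $\langle F(x), x_{t+1} - x \rangle \le \langle F(x_{t+1}), x_{t+1} - x \rangle$. Multiplying by $\gamma_t \theta_t \ge 0$, summing over $t$, and using linearity of $z \mapsto \langle F(x), z \rangle$ together with the definition \eqnok{def_xbar} of $\bar{x}_{k+1}$ gives
\[
\big(\tsum_{t=1}^k \gamma_t \theta_t\big)\, \langle F(x), \bar{x}_{k+1} - x\rangle \;\le\; \tsum_{t=1}^k \gamma_t \theta_t \langle F(x_{t+1}), x_{t+1} - x\rangle .
\]

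Next I would bound the right-hand side by Proposition~\ref{lemma_dist}, which after rearrangement reads
\[
\tsum_{t=1}^k \theta_t \gamma_t \langle F(x_{t+1}), x_{t+1} - x\rangle \;\le\; \tsum_{t=1}^k \theta_t\big(V(x_t,x)-V(x_{t+1},x)\big) + L^2 \theta_k \gamma_k^2 \|x_{k+1}-x\|^2 .
\]
An Abel-type rearrangement of the first sum on the right equals $\theta_1 V(x_1,x) + \tsum_{t=2}^k (\theta_t-\theta_{t-1})V(x_t,x) - \theta_k V(x_{k+1},x)$, and since $\theta_t \le \theta_{t-1}$ each middle term is $\le 0$, so this is at most $\theta_1 V(x_1,x) - \theta_k V(x_{k+1},x)$. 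The leftover term $L^2\theta_k\gamma_k^2\|x_{k+1}-x\|^2$ is absorbed using \eqnok{strong_convex_V} (so it is $\le 2L^2\theta_k\gamma_k^2 V(x_{k+1},x)$) and then \eqnok{eqn:5} (so it is $\le \theta_k V(x_{k+1},x)$), which exactly cancels the $-\theta_k V(x_{k+1},x)$. Hence $\tsum_{t=1}^k \theta_t\gamma_t \langle F(x_{t+1}),x_{t+1}-x\rangle \le \theta_1 V(x_1,x)$.

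Combining the two displays and dividing by $\tsum_{t=1}^k \gamma_t\theta_t > 0$ yields $\langle F(x), \bar{x}_{k+1}-x\rangle \le \theta_1 V(x_1,x)/\tsum_{t=1}^k \gamma_t\theta_t$ for every $x \in X$; taking the maximum over $x \in X$ (finite because $X$ is bounded) gives the first claimed bound. For the second bound, I would check that $\gamma_t = \tfrac{1}{2L}$, $\lambda_t = 1$, $\theta_t = 1$ satisfy \eqnok{eqn:OElambda}, \eqnok{eqn:OEGamma_strong}, \eqnok{eqn:5} and $\theta_t \le \theta_{t-1}$, and that $\tsum_{t=1}^k \gamma_t\theta_t = k/(2L)$, which is an immediate substitution.

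I do not expect a serious obstacle; the only delicate point is the bookkeeping that the two error terms — the tail of the telescoped prox-distances and the $L^2\theta_k\gamma_k^2$-term inherited from Proposition~\ref{lemma_dist} — cancel exactly under \eqnok{eqn:5}, and that monotonicity is applied in the direction that loses nothing when passing from $F(x)$ to $F(x_{t+1})$.
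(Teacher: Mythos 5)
Your proposal is correct and follows essentially the same route as the paper's proof: it invokes Proposition~\ref{lemma_dist}, linearizes via monotonicity to pass from $F(x_{t+1})$ to $F(x)$ and form $\bar x_{k+1}$, and cancels the boundary terms using $\theta_t\le\theta_{t-1}$, \eqref{strong_convex_V} and \eqref{eqn:5}. The only cosmetic difference is that you perform the telescoping by an explicit Abel rearrangement, whereas the paper shifts the index in $\tsum_t \theta_t V(x_t,x)$; these are the same computation.
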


\begin{proof}
In view of Proposition~\ref{lemma_dist}, 
$$
\tsum_{t=1}^k \theta_t \big[ \gamma_t \langle F(x_{t+1}) , x_{t+1} - x \rangle + V(x_{t+1},x) \big] - L^2 \theta_k \gamma_k^2 \| x_{k+1} - x \|^2
\leq \tsum_{t=1}^k \theta_t V(x_t,x).
$$
By using the monotonicity property in \eqnok{monotone} and 
the definition of $\bar x_{k+1}$ in \eqnok{def_xbar},
\begin{eqnarray*}
&& \tsum_{t=1}^k \theta_t \big[ \gamma_t \langle F(x_{t+1}) , x_{t+1} - x \rangle \big] \ge   \tsum_{t=1}^k \theta_t \big[ \gamma_t \langle F(x) , x_{t+1} - x \rangle \big]	
=(\tsum_{t=1}^k \theta_t  \gamma_t )  \langle F(x), \bar{x}_{k+1} - x  \rangle.
\end{eqnarray*}
Combining the above two inequalities, we obtain
$$
(\tsum_{t=1}^k \theta_t  \gamma_t )  \langle F(x), \bar{x}_{k+1} - x  \rangle + \tsum_{t=1}^{k-1} \theta_t V(x_{t+1},x) + \theta_k (\tfrac{1}{2} - L^2  \gamma_k^2) \| x_{k+1} - x \|^2   \leq \theta_1  V(x_1,x) +\tsum_{t=2}^k \theta_{t-1} V(x_t,x),
$$
which together with the assumption $\theta_t \le \theta_{t-1}$ and \eqref{eqn:5} imply that $(\tsum_{t=1}^k \theta_t  \gamma_t )  \langle F(x), \bar{x}_{k+1} - x  \rangle \le \theta_1  V(x_1,x)$.
The result follows from the above inequality and the definition of $\gap(\bar x_{k+1})$ in \eqnok{def_weak_gap}.
\end{proof}

In view of Theorem~\ref{the_monotone},
the OE method can achieve the same ${\cal O}(1/\epsilon)$ optimal complexity bound for MVI,
similarly to the mirror-prox method in \cite{Nem05-1} and the projected reflected gradient method in \cite{Malisky15}.
However, the algorithmic scheme of the OE method appears to be much simpler than the mirror-prox method.
In contrast to the projected reflected gradient method, the OE method does not require
$F$ to be well-defined over the whole $\bbr^n$ and hence is applicable to a broader class of MVI problems.

\section{Stochastic VIs} \label{sec_SOE}
In comparison to deterministic VIs, the algorithmic studies for solving stochastic VIs, especially
(generalized) strongly monotone VIs, are still quite limited. In this section we show that
the stochastic OE method can significantly improve
existing complexity for solving stochastic strongly monotone VIs, while
achieving the optimal complexity for solving stochastic monotone VIs.

\subsection{The stochastic OE method}

In this section, we consider the stochastic VI for which the operator $F$ can be accessed only
through a stochastic oracle. More specifically, given the current iterate $x_t$, the stochastic
oracle can generate a random vector $\tilde{F}(x_t, \xi_t)$
s.t. 
\[
 \mathbb{E}[\tilde{ F}(x_t, \xi_t)] = F(x_t) \ \ \mbox{and} \ \ \mathbb{E}[\| \tilde{F}(x_t, \xi_t) - F(x_t) \|_*^2] \leq  \sigma^2,
\]
where $\xi_t \in \bbr^d$ denotes a random \textcolor{black}{vector} independent of $x_1, \ldots, x_t$.

We also consider the mini-batch setting that is widely used in practice. In this setting, at 
search point $x_t$
we call the stochastic oracle $m_t$ times to generate the
i.i.d. samples $ \{\xi_{t,i}\}_{i=1, \hdots, m_t}$ and  corresponding values of $\{\tilde{F}(x_t, \xi_{t,i})\}_{i=1, \ldots, m_t}$,
and compute an unbiased estimator of $F(x_t)$ according to 
\beq
\label{mini_batch_operator}
\tilde{F}(x_t) = \tfrac{1}{m_t} \tsum_{t=1}^{m_t} \tilde{F}(x_t,\xi_{t,i}).
\eeq
Assuming that the norm $\|\cdot \|_*$ is Euclidean, we then conclude from 
the independence of the successive samples 
\beq \label{bound_varieance_minibatch}
 \mathbb{E}[\| \tilde{F}(x_t) - F(x_t) \|_*^2] \leq  \sigma_t^2:= \tfrac{\sigma^2}{m_t}.
\eeq
Obviously, the above mini-batch setting reduces to the standard single-oracle setting if $m_t = 1$ for all $t \ge 1$.
It should be noted, however, that the single-oracle setting does not require the Euclidean structure.

%
%
%
%

The stochastic operator extrapolation (SOE) method (see Algorithm~\ref{alg:stochastic}) is obtained by replacing the exact
operator $F$ with a stochastic estimator $\tilde F$ in the OE method.
\begin{algorithm}[H]  \caption{The Stochastic Operator Extrapolation (SOE) Method}  
	\label{alg:stochastic}
	\begin{algorithmic} 
		\STATE{Let $x_0 = x_1 \in X$, and the nonnegative parameters $\{ \gamma_t\}$ and $\{\lambda_t\}$ be given. }
		\FOR{$ t = 1, \ldots, k$}
		\STATE 
		\beq \label{stochastic_algorithm_step}
		x_{t+1} = \arg \min_{x \in X} \gamma_t \langle \tilde{F}(x_t) + \lambda_t [\tilde{F}(x_t) - \tilde{F}(x_{t-1})],x \rangle + V(x_t,x).
		\eeq
		\ENDFOR
	\end{algorithmic}
\end{algorithm} 
Throughout this section for given sequences $ \{ x_t\} $ and $\{ \xi_t\}$ we will use the notation
\beq \label{def_Delta_F_tilde}
\Delta \tilde F_t := \tilde{F}(x_t) - \tilde{F}(x_{t-1}) \ \ \mbox{and} \ \ \delta_t :=   \tilde{F}(x_t) - F(x_t),
\eeq
where $\delta_t$
denotes the error associated with the computation of the operator $F(x_t)$.
We still employ the notation $\Delta F_t$ and  $\Delta V_t(x)$ for $x \in X$, as introduced
in \eqnok{def_Delta_V}.
Similar to Lemma~\ref{opt_deter_step},
we characterize the optimality condition of \eqnok{stochastic_algorithm_step} as follows.
\begin{lemma} \label{lemma_projection_stoch}
Let $x_{t+1}$ be defined in \eqnok{stochastic_algorithm_step}, then \textcolor{black}{$\forall x \in X$}, 
\beq \label{opt_stoch_step}
\gamma_t \langle  \tilde F(x_t) + \lambda_t \big( \tilde F(x_t) - \tilde F(x_{t-1}) \big) , x_{t+1} - x \rangle + V(x_t, x_{t+1}) \leq V(x_t, x) - V(x_{t+1}, x).
\eeq
\end{lemma}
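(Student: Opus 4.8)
The plan is to observe that Lemma~\ref{lemma_projection_stoch} is the exact analogue of Lemma~\ref{lemma_projection}, with the deterministic operator values $F(x_t), F(x_{t-1})$ replaced throughout by their stochastic counterparts $\tilde F(x_t), \tilde F(x_{t-1})$. Since the ``three-point lemma'' (Lemma 3.1 of \cite{LanBook2020}) is purely a statement about the optimality condition of a strongly convex projection subproblem and does not use any structural property of the linear term being projected, the same argument applies verbatim. Concretely, first I would write down the first-order optimality condition for the convex program \eqnok{stochastic_algorithm_step}: abbreviating $g_t := \tilde F(x_t) + \lambda_t(\tilde F(x_t) - \tilde F(x_{t-1}))$, the minimizer $x_{t+1}$ satisfies $\la \gamma_t g_t + \o'(x_{t+1}) - \o'(x_t),\, x - x_{t+1}\ra \ge 0$ for all $x \in X$, where $\o'(x_{t+1}) \in \partial \o(x_{t+1})$ is the subgradient selected by the subdifferential of $V(x_t,\cdot)$ at the optimum (so that $\gamma_t g_t + \o'(x_{t+1}) - \o'(x_t) \in -N_X(x_{t+1})$).

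Next I would invoke the elementary three-point identity for the Bregman distance, namely
\[
\la \o'(x_{t+1}) - \o'(x_t),\, x - x_{t+1} \ra = V(x_t, x) - V(x_{t+1}, x) - V(x_t, x_{t+1}),
\]
which follows directly from the definition $V(x,y) = \o(y) - \o(x) - \la \o'(x), y - x\ra$ by expanding all three terms. Substituting this into the optimality inequality and rearranging yields
\[
\gamma_t \la g_t,\, x_{t+1} - x\ra + V(x_t, x_{t+1}) \le V(x_t, x) - V(x_{t+1}, x), \quad \forall x \in X,
\]
which is precisely \eqnok{opt_stoch_step} once $g_t$ is written out in terms of $\tilde F(x_t)$ and $\tilde F(x_{t-1})$.

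There is essentially no real obstacle here; the only point requiring a modicum of care is the handling of the (possibly) nondifferentiable prox-function $\o$, i.e., ensuring that the subgradient of $\o$ appearing in the optimality condition is the same one used in the definition of the Bregman distances entering the three-point identity. This is standard and is exactly the content of Lemma 3.1 in \cite{LanBook2020}, so I would simply cite it rather than reproduce the argument, noting that the statement there is formulated for an arbitrary fixed vector in place of $g_t$. In the special Euclidean case $V(x,y) = \tfrac{1}{2}\|x-y\|^2$ the identity reduces to the familiar cosine-rule decomposition and the proof collapses to a one-line computation.
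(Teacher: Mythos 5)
Your proposal is correct and matches the paper's treatment: the paper does not prove this lemma either, but simply notes it is the same optimality-condition/three-point argument as in Lemma~\ref{lemma_projection} (citing Lemma 3.1 of \cite{LanBook2020}), applied with the linear term $\tilde F(x_t)+\lambda_t(\tilde F(x_t)-\tilde F(x_{t-1}))$ in place of its deterministic counterpart. Your explicit derivation via the first-order optimality condition and the Bregman three-point identity is exactly that standard argument.
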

What follows is the counterpart of Proposition \ref{lemma_dist} for the stochastic case.
\begin{proposition} \label{prop_general_stohastic}
	Let $\{x_t\}$ be generated by the SOE method and $\{\theta_t\} $ be a sequence of nonnegative numbers.
 	If the parameters in this method satisfy 
 	\eqnok{eqn:OElambda} and 
	\beq \label{split_8}
	\theta_{t-1} \geq 16 L^2 \gamma_t^2 \lambda_t^2 \theta_t
	\eeq
	for all $t = 1, \ldots, k$, then for any $x \in X$,
	\begin{align}
	\tsum_{t=1}^k \big\{  \theta_t \big[ \gamma_t \langle \tilde F(x_{t+1}) , x_{t+1} - x \rangle + V(x_{t+1},x)+ \tfrac{1}{4}  V(x_t, x_{t+1})\big]\big\} - 2 L^2 \theta_k \gamma_k^2 \| x_{k+1} - x \|^2  \nn\\
	 - \theta_k \gamma_k \langle \delta_{k+1} - \delta_k , x_{k+1} - x \rangle\leq \tsum_{t=1}^k \theta_t V(x_t,x) + 2 \tsum_{t=1}^k \left( \theta_t \gamma_t^2 \lambda_t^2 \|\delta_t - \delta_{t-1} \|_*^2 \right). \label{prop_general_stochastic_result}
	\end{align}
\end{proposition}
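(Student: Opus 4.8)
The plan is to follow the template of the proof of Proposition~\ref{lemma_dist}, but carry along the stochastic error terms carefully. First I would invoke Lemma~\ref{lemma_projection_stoch}, multiply \eqnok{opt_stoch_step} by $\theta_t$, and rewrite $\tilde F(x_t) = F(x_t) + \delta_t$ and $\Delta\tilde F_t = \Delta F_t + (\delta_t - \delta_{t-1})$, separating the ``true'' operator contribution (which produces exactly the telescoping-friendly terms from \eqnok{eqn:1}) from the noise contribution. This yields, for each $t$,
\begin{align*}
\theta_t \Delta V_t(x) &\ge \theta_t\gamma_t\langle F(x_{t+1}),x_{t+1}-x\rangle - \theta_t\gamma_t\langle\Delta F_{t+1},x_{t+1}-x\rangle + \theta_t\gamma_t\lambda_t\langle\Delta F_t,x_t-x\rangle \\
&\quad + \theta_t\gamma_t\lambda_t\langle\Delta F_t,x_{t+1}-x_t\rangle + \theta_t V(x_t,x_{t+1}) + \theta_t\gamma_t\langle\delta_{t+1},x_{t+1}-x\rangle \\
&\quad - \theta_t\gamma_t\langle\delta_t,x_{t+1}-x\rangle - \theta_t\gamma_t\lambda_t\langle\delta_t-\delta_{t-1},x_{t+1}-x\rangle.
\end{align*}
Wait — I need to be careful about which terms carry $\tilde F$ versus $F$. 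The left-hand side of \eqnok{prop_general_stochastic_result} has $\langle\tilde F(x_{t+1}),x_{t+1}-x\rangle$, so I should keep the leading operator term in terms of $\tilde F(x_{t+1})$; concretely, replace $\theta_t\gamma_t\langle\tilde F(x_t),x_{t+1}-x\rangle$ by $\theta_t\gamma_t\langle\tilde F(x_{t+1}),x_{t+1}-x\rangle - \theta_t\gamma_t\langle\Delta\tilde F_{t+1},x_{t+1}-x\rangle$ directly, without peeling off the noise, and only convert the $\Delta\tilde F_{t+1}$ and $\Delta\tilde F_t$ cross-terms into $\Delta F$ plus $\delta$-differences as needed for the telescoping and the Lipschitz bounds.

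Then I would sum from $t=1$ to $k$. As in Proposition~\ref{lemma_dist}, condition \eqnok{eqn:OElambda} makes the $\langle\Delta F_t,x_t-x\rangle$ and $\langle\Delta\tilde F_t,\cdot\rangle$-type terms telescope against the $t{+}1$ terms; using $x_0=x_1$ kills the boundary term at $t=1$; the only surviving operator-cross term at the top is $-\theta_k\gamma_k\langle\Delta\tilde F_{k+1},x_{k+1}-x\rangle$, which I split as $-\theta_k\gamma_k\langle\Delta F_{k+1},x_{k+1}-x\rangle - \theta_k\gamma_k\langle\delta_{k+1}-\delta_k,x_{k+1}-x\rangle$; the second piece is exactly the term that appears on the left of \eqnok{prop_general_stochastic_result}, so it gets moved over, and the first is bounded by $L\theta_k\gamma_k\|x_k-x_{k+1}\|\,\|x_{k+1}-x\|$ via \eqnok{Lipschitz}. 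The noise cross-terms $\theta_t\gamma_t\lambda_t\langle\delta_t-\delta_{t-1},x_{t+1}-x_t\rangle$ coming from the extrapolation of the error must be absorbed: I bound $|\langle\delta_t-\delta_{t-1},x_{t+1}-x_t\rangle| \le \tfrac{1}{4}\|x_{t+1}-x_t\|^2 + \|\delta_t-\delta_{t-1}\|_*^2$ (weighted Young) — more precisely, scaled so that the quadratic part uses up at most $\tfrac14\theta_t V(x_t,x_{t+1})$ worth of the available $\theta_t V(x_t,x_{t+1})$ budget, which forces the constant $16$ (instead of $4$) in \eqnok{split_8}, and the $\delta$-part contributes $2\theta_t\gamma_t^2\lambda_t^2\|\delta_t-\delta_{t-1}\|_*^2$ to the right-hand side.

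The remaining bookkeeping mirrors the deterministic proof: I collect the term $Q_k := \sum_{t=1}^k[\theta_t\gamma_t\lambda_t\langle\Delta F_t,x_{t+1}-x_t\rangle + \theta_t V(x_t,x_{t+1})]$, split each $\theta_t V(x_t,x_{t+1}) \ge \tfrac12\theta_t\|x_{t+1}-x_t\|^2$ into a $\tfrac14$-share to dominate (together with a $\tfrac{\theta_{t-1}}{?}$-share shifted from index $t{-}1$) the $\Delta F_t$ cross-term via \eqnok{Lipschitz} and \eqnok{split_8}, a $\tfrac14$-share kept in reserve for the noise Young step above, and a $\tfrac14$-share retained as the $\tfrac14 V(x_t,x_{t+1})$ appearing in \eqnok{prop_general_stochastic_result}; this leaves a leftover $\tfrac{\theta_k}{4}\|x_k-x_{k+1}\|^2$ which combines with $-\theta_k\gamma_k\langle\Delta F_{k+1},x_{k+1}-x\rangle$, and by $ab \le \tfrac14 a^2 + b^2$ with $a=\|x_k-x_{k+1}\|$ yields $-L^2\theta_k\gamma_k^2\|x_{k+1}-x\|^2$; doubling the slack (the $\tfrac14$ vs needing room) accounts for the $2L^2\theta_k\gamma_k^2$ coefficient in \eqnok{prop_general_stochastic_result}. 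Rearranging gives the claim. The main obstacle is the constant accounting: getting the fractions of $V(x_t,x_{t+1})$ to simultaneously (i) absorb the extrapolated-error cross-term, (ii) dominate the $\Delta F_t$ Lipschitz cross-term under \eqnok{split_8}, and (iii) leave exactly $\tfrac14 V(x_t,x_{t+1})$ on the left, all while the final boundary terms close with coefficient $2L^2\theta_k\gamma_k^2$ — this is where the jump from $4$ to $16$ in the parameter condition comes from, and it has to be tracked precisely rather than hand-waved.
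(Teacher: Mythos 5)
Your proposal is correct and follows essentially the same route as the paper's proof: the same rearrangement of the optimality condition keeping $\langle \tilde F(x_{t+1}), x_{t+1}-x\rangle$ as the leading term, telescoping via \eqref{eqn:OElambda}, splitting $\Delta \tilde F_t = \Delta F_t + (\delta_t - \delta_{t-1})$ only in the cross terms, the three-way division of the $\theta_t V(x_t,x_{t+1})$ budget (which is exactly what forces the constant $16$ in \eqref{split_8} and leaves the $\tfrac14 V(x_t,x_{t+1})$ on the left), Young's inequality producing the $2\theta_t\gamma_t^2\lambda_t^2\|\delta_t-\delta_{t-1}\|_*^2$ terms, and moving $-\theta_k\gamma_k\langle\delta_{k+1}-\delta_k,x_{k+1}-x\rangle$ to the left. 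The only nit is in your final boundary step: the surviving share is $\tfrac{\theta_k}{8}\|x_k-x_{k+1}\|^2$ (not $\tfrac{\theta_k}{4}$), and it is this \emph{smaller} slack that yields the coefficient $2L^2\theta_k\gamma_k^2$ via $\gamma_k L ab \le \tfrac{a^2}{8} + 2L^2\gamma_k^2 b^2$ — a detail you flagged yourself and which does not change the argument.
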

\begin{proof}
It follows from \eqnok{opt_stoch_step} after multiplying with $ \theta_t$ that
\begin{align}
\theta_t \Delta V_t(x) & \ge \theta_t \gamma_t \langle \tilde F(x_{t+1}), x_{t+1} - x \rangle -
\theta_t \gamma_t \langle \Delta \tilde F_{t+1}, x_{t+1} - x \rangle + \theta_t \gamma_t \lambda_t \langle \Delta \tilde F_t, x_{t} - x \rangle \nn \\
& \quad + \theta_t \gamma_t \lambda_t \langle  \Delta \tilde F_t, x_{t+1} - x_t \rangle  + \theta_t V(x_{t+1},x_t). \label{3point_stochastic}
\end{align}
Summing up  from $t =  1$ to $k $, invoking \eqref{eqn:OElambda} and $x_1 = x_0$,  and assuming also $ \delta_1 = \delta_0$  we obtain
	\begin{align}
		\tsum_{t=1}^k    \theta_t \Delta V_t(x)   \geq   \tsum_{t=1}^k \big[ \theta_t \gamma_t \langle \tilde F(x_{t+1}) , x_{t+1} - x \rangle \big] 
		- \theta_k \gamma_k \langle \Delta \tilde F_{k+1} , x_{k+1} - x \rangle    +  \tilde Q_k, \label{eqn:common_bnd_VI_stoc}
	\end{align}	
	with
\begin{align}
	 \tilde Q_k &:= \tsum_{t=1}^k  
	\left[ \theta_t \gamma_t \lambda_t \langle\Delta \tilde F_t, x_{t+1} - x_t \rangle + \theta_t V(x_t, x_{t+1}) \right] \nn \\
	&= \tsum_{t=1}^k \left[ \theta_t \gamma_t \lambda_t \langle \left( \Delta F_t  +\delta_t - \delta_{t-1} \right), x_{t+1} - x_t \rangle + \theta_t V(x_t, x_{t+1}) \right]. \label{def_tildeQ_t}
	\end{align}
	Using  the Lipschitz condition \eqref{Lipschitz} and $x_1 = x_0$, we can lower bound the term $\tilde Q_k$ as follows
	\begin{align*}
		\tilde Q_k
		&\geq   \tsum_{t=1}^k    \left[ -   \theta_t \gamma_t \lambda_t L \|   x_t - x_{t-1} \| 
		\|   x_{t+1} - x_t \|  +   \theta_t V(x_t, x_{t+1}) +   \theta_t \gamma_t \lambda_t \langle\delta_t - \delta_{t-1}, x_{t+1} - x_t \rangle\right] \\
		 & \geq   \tsum_{t=1}^k  \left[-    \theta_t \gamma_t \lambda_t L \|   x_t - x_{t-1} \| 
		\|   x_{t+1} - x_t \|  +   \tfrac{\theta_t}{8} \|  x_t - x_{t+1} \|^2 +   \tfrac{\theta_{t-1} }{8} \|  x_t - x_{t-1} \|^2  \right]  +       \tfrac{ \theta_k}{8} \|  x_k - x_{k+1} \|^2  \\
		   & \quad +  \tsum_{t=1}^k \left[ \theta_t \gamma_t \lambda_t \langle\delta_t - \delta_{t-1}, x_{t+1} - x_t \rangle + \tfrac{\theta_{t} }{8} \|  x_t - x_{t+1} \|^2  \right]   + \tfrac{1}{4} \tsum_{t=1}^k\theta_t V(x_t, x_{t+1}) \\
		&\geq    \tfrac{ \theta_k}{8} \|  x_k - x_{k+1} \|^2 + \tsum_{t=1}^k \left[ \theta_t \gamma_t \lambda_t \langle\delta_t - \delta_{t-1}, x_{t+1} - x_t \rangle + \tfrac{\theta_{t} }{8} \|  x_t - x_{t+1} \|^2  \right]
		+ \tfrac{1}{4} \tsum_{t=1}^k\theta_t V(x_t, x_{t+1})\\
		&\geq \tfrac{ \theta_k}{8} \|  x_k - x_{k+1} \|^2 - 2 \tsum_{t=1}^k \left( \theta_t \gamma_t^2 \lambda_t^2 \|\delta_t - \delta_{t-1} \|_*^2 \right) + \tfrac{1}{4} \tsum_{t=1}^k\theta_t V(x_t, x_{t+1}),
	\end{align*} 
	where the second inequality follows from \eqnok{strong_convex_V}, the third inequality  from \eqnok{split_8} and the last one follows from  Young's inequality.
	Using the above bound of $\tilde Q_k$ in \eqnok{eqn:common_bnd_VI_stoc}, we obtain
	\begin{align}
\tsum_{t=1}^k    \theta_t \Delta V_t(x)  &\geq   \tsum_{t=1}^k \big[ \theta_t \gamma_t \langle \tilde F(x_{t+1}) , x_{t+1} - x \rangle \big]
- \theta_k \gamma_k \langle \Delta \tilde F_{k+1} , x_{k+1} - x \rangle +  \tfrac{ \theta_k}{8} \|  x_k - x_{k+1} \|^2  \nn \\
   & \quad -  2 \tsum_{t=1}^k \left( \theta_t \gamma_t^2 \lambda_t^2 \|\delta_t - \delta_{t-1} \|_*^2 \right) + \tfrac{1}{4} \tsum_{t=1}^k\theta_t V(x_t, x_{t+1}) \label{stochatc_bound_general_tmp1} \\
 &\ge \tsum_{t=1}^k \big[ \theta_t \gamma_t \langle \tilde F(x_{t+1}) , x_{t+1} - x \rangle \big] 
- 2 L^2 \theta_k \gamma_k^2  \|  x - x_{k+1} \|^2  -  \gamma_k \theta_k \langle \delta_{k+1} - \delta_k , x_{k+1} - x \rangle \nn \\
  & \quad -  2 \tsum_{t=1}^k \left( \theta_t \gamma_t^2 \lambda_t^2 \|\delta_t - \delta_{t-1} \|_*^2 \right) + \tfrac{1}{4} \tsum_{t=1}^k\theta_t V(x_t, x_{t+1}), \label{stochatc_bound_general_tmp}
\end{align}
	where the second inequality follows from
	\begin{align*}
		&- \gamma_k \langle \Delta \tilde F_{k+1} , x_{k+1} - x \rangle  +
		\tfrac{1 }{8} \|  x_k - x_{k+1} \|^2 
		=  - \gamma_k \langle \Delta  F_{k+1} , x_{k+1} - x \rangle    -  \gamma_k \langle \delta_{k+1} - \delta_k , x_{k+1} - x \rangle
		+ \tfrac{1 }{8} \|  x_k - x_{k+1} \|^2 \\
		&\geq 
		-  \gamma_k L \|  x_k - x_{k+1} \|  \|  x - x_{k+1} \| +
		\tfrac{1 }{8} \|  x_k - x_{k+1} \|^2  -  \gamma_k \langle \delta_{k+1} - \delta_k , x_{k+1} - x \rangle \\
		&\geq  - 2 L^2 \gamma_k^2  \|  x - x_{k+1} \|^2  -  \gamma_k \langle \delta_{k+1} - \delta_k , x_{k+1} - x \rangle \textcolor{black}{.}
	\end{align*}
	The result is obtained by rearranging the terms in \eqnok{stochatc_bound_general_tmp}
\end{proof}

\subsection{Convergence for stochastic GSMVIs}

We describe the main convergence properties of SOE for stochastic generalized strongly monotone VI,
i.e., when \eqnok{G_monotone} holds for some $\mu > 0$.

\begin{theorem} \label{the_stoch_GMVI}
Suppose that \eqnok{G_monotone} holds for some $\mu \ge 0$.
 	If the parameters in the SOE method satisfy 
 	\eqnok{eqn:OElambda}, \eqnok{split_8}, and
	\begin{align}
		\theta_{t} &\leq \theta_{t-1} (2 \mu \gamma_{t-1} + 1), ~~t = 1, \ldots, k, \label{gamma_strong_mon_stoch} \\
		8 L^2 \gamma_k^2 &\leq 1, \label{gamma_strong_mon_stoch_k}
	\end{align}
then 
\begin{align*}
 \theta_k (2 \mu \gamma_k + \tfrac{1}{2})  \bbe[V(x_{k+1},x^*)] + \tsum_{t=1}^{k-1}\tfrac{\theta_t}{4}  \bbe[V(x_t, x_{t+1})]  
   \leq \theta_1 V(x_1,x^*)  +  \textcolor{black}{  4   \tsum_{t=1}^k \left( \theta_t \gamma_t^2 \lambda_t^2 ( \sigma_{t-1}^2 +    \sigma_t^2) \right)}
	+ 2 \theta_k  \gamma_k^2 \sigma_k^2.
\end{align*}
\end{theorem}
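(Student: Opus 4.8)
The plan is to specialize Proposition~\ref{prop_general_stohastic} (whose hypotheses \eqnok{eqn:OElambda} and \eqnok{split_8} are part of the present assumptions) to the point $x = x^*$, convert the stochastic operator terms into a one-step recursion in $V(\cdot, x^*)$ via generalized strong monotonicity, collapse that recursion by telescoping, and finally pass to expectations while carefully separating the zero-mean noise contributions from the one that must be paid for. First I would take $x = x^*$ in \eqnok{prop_general_stochastic_result}, write $\tilde F(x_{t+1}) = F(x_{t+1}) + \delta_{t+1}$ in the notation of \eqnok{def_Delta_F_tilde}, and apply \eqnok{G_monotone} to get $\gamma_t\langle\tilde F(x_{t+1}),x_{t+1}-x^*\rangle \ge 2\mu\gamma_t V(x_{t+1},x^*) + \gamma_t\langle\delta_{t+1},x_{t+1}-x^*\rangle$. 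This turns the operator sum in \eqnok{prop_general_stochastic_result} into $\tsum_{t=1}^k\theta_t[(2\mu\gamma_t+1)V(x_{t+1},x^*)+\tfrac14 V(x_t,x_{t+1})]$ plus the linear noise terms $\tsum_{t=1}^k\theta_t\gamma_t\langle\delta_{t+1},x_{t+1}-x^*\rangle$ and $-\theta_k\gamma_k\langle\delta_{k+1}-\delta_k,x_{k+1}-x^*\rangle$.

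Next I would telescope. On the right-hand side, \eqnok{gamma_strong_mon_stoch} gives $\theta_t V(x_t,x^*) \le \theta_{t-1}(2\mu\gamma_{t-1}+1)V(x_t,x^*)$ for $t\ge 2$, hence $\tsum_{t=1}^k\theta_t V(x_t,x^*)\le \theta_1 V(x_1,x^*) + \tsum_{t=1}^{k-1}\theta_t(2\mu\gamma_t+1)V(x_{t+1},x^*)$; moving the second sum to the left cancels all but the last term of $\tsum_{t=1}^k\theta_t(2\mu\gamma_t+1)V(x_{t+1},x^*)$, leaving $\theta_k(2\mu\gamma_k+1)V(x_{k+1},x^*)$. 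Then \eqnok{strong_convex_V} yields $\|x_{k+1}-x^*\|^2 \le 2V(x_{k+1},x^*)$, and condition \eqnok{gamma_strong_mon_stoch_k} ($8L^2\gamma_k^2\le1$) lets me absorb the term $-2L^2\theta_k\gamma_k^2\|x_{k+1}-x^*\|^2$ into it, downgrading the coefficient from $(2\mu\gamma_k+1)$ to $(2\mu\gamma_k+\tfrac12)$.

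I would then take expectations. The linear noise terms partially telescope: adding $-\theta_k\gamma_k\langle\delta_{k+1}-\delta_k,x_{k+1}-x^*\rangle$ to the $t=k$ summand of $\tsum_{t=1}^k\theta_t\gamma_t\langle\delta_{t+1},x_{t+1}-x^*\rangle$ cancels the $\delta_{k+1}$ contribution and leaves $\tsum_{t=1}^{k-1}\theta_t\gamma_t\langle\delta_{t+1},x_{t+1}-x^*\rangle + \theta_k\gamma_k\langle\delta_k,x_{k+1}-x^*\rangle$. For $t\le k-1$ the iterate $x_{t+1}$ is a function of $\xi_1,\dots,\xi_t$ (of the first $t$ mini-batches in that setting) whereas $\bbe[\delta_{t+1}\mid\xi_1,\dots,\xi_t]=0$, so these expectations vanish. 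The squared noise is handled by $\|\delta_t-\delta_{t-1}\|_*^2 \le 2\|\delta_t\|_*^2 + 2\|\delta_{t-1}\|_*^2$ combined with \eqnok{bound_varieance_minibatch}, turning $2\tsum_{t=1}^k\theta_t\gamma_t^2\lambda_t^2\bbe[\|\delta_t-\delta_{t-1}\|_*^2]$ into $4\tsum_{t=1}^k\theta_t\gamma_t^2\lambda_t^2(\sigma_{t-1}^2+\sigma_t^2)$.

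The genuinely delicate point — and the main obstacle — is the leftover term $\theta_k\gamma_k\langle\delta_k,x_{k+1}-x^*\rangle$, which does not have zero mean because $x_{k+1}$ depends on $\xi_k$. I would split it as $\langle\delta_k,x_k-x^*\rangle + \langle\delta_k,x_{k+1}-x_k\rangle$: the first piece has zero mean since $x_k$ depends only on $\xi_1,\dots,\xi_{k-1}$, and the second is bounded, via Young's inequality, against the still-unused term $\tfrac14\theta_k V(x_k,x_{k+1}) \ge \tfrac{\theta_k}{8}\|x_k-x_{k+1}\|^2$, using $-\theta_k\gamma_k\langle\delta_k,x_{k+1}-x_k\rangle - \tfrac{\theta_k}{8}\|x_k-x_{k+1}\|^2 \le 2\theta_k\gamma_k^2\|\delta_k\|_*^2$, whose expectation is at most $2\theta_k\gamma_k^2\sigma_k^2$. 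Since the $t=k$ summand of $\tfrac14\tsum_{t=1}^k\theta_t V(x_t,x_{t+1})$ is consumed in this step, what remains on the left is $\tsum_{t=1}^{k-1}\tfrac{\theta_t}{4}\bbe[V(x_t,x_{t+1})]$, and collecting everything produces the stated inequality. Apart from this noise bookkeeping — deciding which inner products can be zeroed by conditioning and which must be charged against a reserved $V(x_t,x_{t+1})$ — the argument is the same telescoping used in Theorem~\ref{the_linear_convergence}.
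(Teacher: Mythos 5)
Your proposal is correct and follows essentially the same route as the paper's proof: specialize Proposition~\ref{prop_general_stohastic} at $x=x^*$, zero out the martingale terms $\langle\delta_{t+1},x_{t+1}-x^*\rangle$ and $\langle\delta_k,x_k-x^*\rangle$ by conditioning, charge the non-martingale piece $\gamma_k\langle\delta_k,x_{k+1}-x_k\rangle$ against the reserved $\tfrac{\theta_k}{4}V(x_k,x_{k+1})$ via Young's inequality to produce $2\theta_k\gamma_k^2\sigma_k^2$, bound $\bbe\|\delta_t-\delta_{t-1}\|_*^2\le 2(\sigma_{t-1}^2+\sigma_t^2)$, telescope with \eqnok{gamma_strong_mon_stoch}, and absorb $-2L^2\theta_k\gamma_k^2\|x_{k+1}-x^*\|^2$ using \eqnok{strong_convex_V} and \eqnok{gamma_strong_mon_stoch_k}. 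The only difference is the order of operations (you telescope before taking expectations, the paper after), which is immaterial.
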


\begin{proof}
Denote $\bar \xi_t = (\xi_{t,1}, \ldots, \xi_{t,m_t})$.
Let us fix $x = x^*$ and take expectation on both sides of \eqnok{prop_general_stochastic_result} w.r.t. $ \bar \xi_1, \ldots, \bar \xi_k$. Then,
\begin{align}
\tsum_{t=1}^k \big\{  \theta_t \big[ \gamma_t \bbe[ \langle \tilde F(x_{t+1}) , x_{t+1} - x^* \rangle] + \bbe[V(x_{t+1},x^*)]+ \tfrac{1}{4}  \bbe[V(x_t, x_{t+1})]\big]\big\} - 2 L^2 \theta_k \gamma_k^2\bbe[ \| x_{k+1} - x^* \|^2]  \nn\\
	- \theta_k \gamma_k \bbe [\langle \delta_{k+1} - \delta_k , x_{k+1} - x^* \rangle]
	\leq \tsum_{t=1}^k \theta_t \bbe[V(x_t,x^*)] + 2 \tsum_{t=1}^k \left( \theta_t \gamma_t^2 \lambda_t^2\bbe[ \|\delta_t - \delta_{t-1} \|_*^2] \right).\label{bnd_stoch_temp}
\end{align}
First note  that $x_t $ is a deterministic function of $\bar \xi_1, \hdots, \bar \xi_{t-1}$. By conditioning on $ \bar \xi_1, \hdots, \bar \xi_{t-1}$ and  using the law of iterated expectations it follows that 
$
\mathbb{E}[ \langle \delta_t , x_t - x^* \rangle ] = 0.
$ 
This implies that 
\begin{align*}
\bbe[ \langle \tilde F(x_{t+1}) , x_{t+1} - x^* \rangle] &= \bbe[\langle F(x_{t+1}) , x_{t+1} - x^* \rangle], \\
\bbe[\langle \delta_{k+1} - \delta_k , x_{k+1} - x^* \rangle] &= \bbe[\langle - \delta_k , x_{k+1} - x^* \rangle] 
=  \bbe[\langle - \delta_k, x_{k+1} - x_k \rangle].
\end{align*}
In addition, we have
\begin{align*}
\bbe[\|\delta_t - \delta_{t-1} \|_*^2] &\le 2 (\bbe[\|\delta_t\|_*^2 + \bbe[\|\delta_{t-1}\|_*^2) \le      \textcolor{black}{ 2  \sigma_{t-1}^2 +   2 \sigma_t^2}.
\end{align*}
Using these observations in \eqnok{bnd_stoch_temp}, we obtain
\begin{align*}
\tsum_{t=1}^k \big\{  \theta_t \big[ \gamma_t \bbe[ \langle F(x_{t+1}) , x_{t+1} - x^* \rangle] + \bbe[V(x_{t+1},x^*)]+ \tfrac{1}{4}  \bbe[V(x_t, x_{t+1})]\big]\big\} - 2 L^2 \theta_k \gamma_k^2\bbe[ \| x_{k+1} - x^* \|^2]  \nn\\
	+\theta_k \gamma_k \bbe [\langle \delta_k , x_{k+1} - x_k \rangle]\leq \tsum_{t=1}^k \theta_t \bbe[V(x_t,x^*)] +  \textcolor{black}{  4   \tsum_{t=1}^k \left( \theta_t \gamma_t^2 \lambda_t^2 ( \sigma_{t-1}^2 +    \sigma_t^2) \right)}
\end{align*}
which together with the fact 
\begin{align*}
\tfrac{1}{4}  \bbe[V(x_k, x_{k+1}) +\gamma_k \bbe [\langle \delta_t , x_{k+1} - x_k \rangle] &\ge \tfrac{1}{8} \bbe[ \|x_k - x_{k+1}\|^2] +\gamma_k \bbe [\langle \delta_k
 , x_{k+1} - x_k \rangle] \\
&\ge - 2 \gamma_k^2 \bbe[\|\delta_t\|_*^2] \ge - 2 \gamma_k^2 \sigma_k^2
\end{align*}
 imply after taking \eqnok{G_monotone} into account
\begin{align*}
\tsum_{t=1}^k \big\{ \theta_t (2 \mu \gamma_t  + 1) \bbe[V(x_{t+1},x^*)]\big\} + \tsum_{t=1}^{k-1}\tfrac{\theta_t}{4}  \bbe[V(x_t, x_{t+1})]  - 2 L^2 \theta_k \gamma_k^2\bbe[ \| x_{k+1} - x^* \|^2]  \nn\\
\leq \tsum_{t=1}^k \theta_t \bbe[V(x_t,x^*)] +  \textcolor{black}{  4   \tsum_{t=1}^k \left( \theta_t \gamma_t^2 \lambda_t^2 ( \sigma_{t-1}^2 +    \sigma_t^2) \right)}
	+ 2 \theta_k  \gamma_k^2 \sigma_k^2.
\end{align*}
Invoking \eqnok{gamma_strong_mon_stoch} and using \eqnok{strong_convex_V}, we have
\begin{align*}
 \theta_k (2 \mu \gamma_k + \tfrac{1}{2}) \bbe[V(x_{k+1},x^*)] + \tsum_{t=1}^{k-1}\tfrac{\theta_t}{4}  \bbe[V(x_t, x_{t+1})]  
  + (\tfrac{1}{4} - 2 L^2 \gamma_k^2) \theta_k \bbe[ \| x_{k+1} - x^* \|^2] \\
   \leq \theta_1 V(x_1,x^*)  +  \textcolor{black}{  4   \tsum_{t=1}^k \left( \theta_t \gamma_t^2 \lambda_t^2 ( \sigma_{t-1}^2 +    \sigma_t^2) \right)}
	+ 2 \theta_k  \gamma_k^2 \sigma_k^2,
\end{align*}
which, in view of \eqnok{gamma_strong_mon_stoch_k}, clearly implies the result.
\end{proof}

\vgap 

We now specify the selection of a few particular
stepsize policies for solving stochastic GSMVI problems.

\begin{corollary}\label{step_size_stoch_strong_mon_k_unknown}
Consider the single-oracle setting with $m_t = 1$. If
$$
t_0 = \tfrac{4 L}{  \mu}, ~~~ \gamma_t= \tfrac{1}{\mu ( t_0 + t -1 ) },~~~ \theta_t  = (t + t_0 + 1) ~  (t + t_0),~~~ \lambda_t = \tfrac{\theta_{t-1} \gamma_{t-1}}{\theta_t ~  \gamma_t},
$$
then
\begin{align*}
\bbe[V(x_{k+1},x^*)] 
   \leq \tfrac{2( t_0 + 1)  (t_0+2) V(x_1,x^*)}{ (k + t_0 + 1)  (k + t_0)}  +  \tfrac{8(4 k+1) \sigma^2}{\mu^2  (k + t_0 + 1)  (k + t_0)}.
\end{align*}
\end{corollary}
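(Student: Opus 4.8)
The plan is to verify that the stated stepsizes satisfy the hypotheses of Theorem~\ref{the_stoch_GMVI} in the single-oracle case $\sigma_t\equiv\sigma$ and then specialize its conclusion. Condition \eqnok{eqn:OElambda} holds by the definition of $\lambda_t$. For the rest the useful identity is $16L^2=\mu^2 t_0^2$ (from $t_0=4L/\mu$); writing $m:=t+t_0$, condition \eqnok{gamma_strong_mon_stoch_k} reads $8L^2\gamma_k^2 = t_0^2/(2(k+t_0-1)^2)\le\tfrac12$; condition \eqnok{gamma_strong_mon_stoch} follows since $\theta_{t-1}(2\mu\gamma_{t-1}+1) = m^2(m-1)/(m-2)$, $\theta_t=(m+1)m$, and $(m+1)(m-2)=(m-1)m-2\le (m-1)m$; and, using $\theta_t\gamma_t\lambda_t=\theta_{t-1}\gamma_{t-1}$ to rewrite \eqnok{split_8} as $\theta_t\ge16L^2\theta_{t-1}\gamma_{t-1}^2 = \theta_{t-1}t_0^2/(m-2)^2$, it holds for $t\ge 2$ because $\theta_t\ge\theta_{t-1}$ and $m-2\ge t_0$ (at $t=1$ the term this condition controls in the proof of Proposition~\ref{prop_general_stohastic} is $\theta_1\gamma_1\lambda_1 L\|x_1-x_0\|\|x_2-x_1\|=0$ since $x_0=x_1$, so the Proposition, hence Theorem~\ref{the_stoch_GMVI}, still applies). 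Throughout we use $t_0\ge4$, i.e. $\mu\le L$.

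Invoking Theorem~\ref{the_stoch_GMVI} with $x=x^*$ and $\sigma_t=\sigma$, dropping the nonnegative sum on the left, using $2\mu\gamma_k+\tfrac12\ge\tfrac12$, and dividing by $\theta_k/2$, we get
\[
\bbe[V(x_{k+1},x^*)] \le \frac{2\theta_1}{\theta_k} V(x_1,x^*) + \frac{16\sigma^2}{\theta_k}\tsum_{t=1}^k\theta_t\gamma_t^2\lambda_t^2 + 4\gamma_k^2\sigma^2 .
\]
Since $\theta_1=(t_0+1)(t_0+2)$ and $\theta_k=(k+t_0+1)(k+t_0)$, the first term on the right is exactly the first term of the claimed bound, so it remains only to bound the last two terms by $8(4k+1)\sigma^2/(\mu^2\theta_k)$.

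The crux is the error sum. Using \eqnok{eqn:OElambda} again, $\theta_t\gamma_t^2\lambda_t^2=(\theta_{t-1}\gamma_{t-1})^2/\theta_t$; since $\theta_{t-1}\gamma_{t-1}=m(m-1)/(\mu(m-2))$ and $\theta_t=(m+1)m$ with $m=t+t_0\ge t_0+1\ge 5$, we obtain $\mu^2\theta_t\gamma_t^2\lambda_t^2=m(m-1)^2/((m-2)^2(m+1))$, which one checks is at most $40/27<\tfrac32$ for all $m\ge5$; hence $\tsum_{t=1}^k\theta_t\gamma_t^2\lambda_t^2\le 3k/(2\mu^2)$. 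Similarly $\mu^2\gamma_k^2\theta_k=(k+t_0+1)(k+t_0)/(k+t_0-1)^2\le2$ (from $m(m+1)\le2(m-1)^2$ for $m\ge5$), so $4\gamma_k^2\sigma^2\le 8\sigma^2/(\mu^2\theta_k)$. Adding,
\[
\frac{16\sigma^2}{\theta_k}\tsum_{t=1}^k\theta_t\gamma_t^2\lambda_t^2+4\gamma_k^2\sigma^2 \le \frac{(24k+8)\,\sigma^2}{\mu^2\theta_k} \le \frac{8(4k+1)\,\sigma^2}{\mu^2\theta_k},
\]
which gives the second term of the claimed bound. The one slightly delicate point is that the numerical constants must line up: this needs the sharp estimates $\mu^2\theta_t\gamma_t^2\lambda_t^2\le\tfrac32$ and $\mu^2\gamma_k^2\theta_k\le2$ (a cruder bound on $\theta_t\gamma_t^2\lambda_t^2$ would overshoot $8(4k+1)$ for small $k$), together with the fact that $t_0\ge4$ keeps every index $m$ in the range $m\ge5$ where these estimates hold; the rest of the verification is routine algebra.
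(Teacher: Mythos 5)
Your proof is correct and takes essentially the same route as the paper's: verify that the stated parameters satisfy the hypotheses of Theorem~\ref{the_stoch_GMVI}, then bound $\theta_1/\theta_k$, $\tsum_t \theta_t\gamma_t^2\lambda_t^2$ and $\theta_k\gamma_k^2$ to extract the two terms of the claimed estimate (your per-term constants $\tfrac32$ and $2$ versus the paper's $2$ and $2$; either yields $8(4k+1)$). Your parenthetical handling of $t=1$ is in fact more careful than the paper, which verifies \eqref{split_8} via $16L^2\gamma_{t-1}^2\le 1$ — an inequality that fails for $\gamma_0=1/(\mu(t_0-1))>1/(4L)$ — whereas you correctly observe that the condition is vacuous there because $x_0=x_1$.
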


\begin{proof}
Note that \eqnok{eqn:OElambda} holds by the definition of $\lambda_t$.
Observe that
\begin{align*}
8 L^2 \gamma_t^2 &=  \tfrac{8 L^2}{\mu^2 ( t_0 + t -1 )^2 } \le \tfrac{8 L^2  }{\mu^2 t_0 ^2 } \le 1, \\
 16 L^2 \gamma_t^2 \lambda_t^2 \theta_t &= 16 L^2 \gamma_t^2 \tfrac{\theta_{t-1}^2 \gamma_{t-1}^2}{\theta_t^2 ~  \gamma_t^2} \theta_t
 = 16 L^2 \tfrac{\theta_{t-1}^2 \gamma_{t-1}^2}{\theta_t} \le 16 L^2 \gamma_{t-1}^2 \theta_{t-1} \le \theta_{t-1},
\end{align*}
and thus both  \eqnok{split_8} and \eqnok{gamma_strong_mon_stoch_k} hold.
In order to check \eqnok{gamma_strong_mon_stoch}, we observe that
\begin{align*}
\theta_{t-1} (2 \mu \gamma_{t-1} + 1) = \theta_{t-1} \left(  \tfrac{2} {t_0 + t -1 }+1 \right) =  \theta_{t-1} \tfrac{t_0 + t +1}{t_0 + t -1} =  \theta_t.
\end{align*}
The result then follows from Theorem~\ref{the_stoch_GMVI} and the following 
simple calculations.
\begin{align*}
\theta_k (2 \mu \gamma_k + \tfrac{1}{2}) &\ge \tfrac{\theta_k}{2} = \tfrac{1}{2} (k + t_0 + 1) ~  (k + t_0), \\
 \tsum_{t=1}^{k} \theta_t \gamma_t^2 \lambda_t^2 &= 	\tsum_{t=1}^{k} \tfrac{\theta_{t-1}^2 \gamma_{t-1}^2}{\theta_t} 
 = \tsum_{t=1}^{k} \tfrac{(t + t_0-1)^2}{(t + t_0-2)^2}   \tfrac{(t + t_0)}{(t + t_0 + 1)}
\tfrac{1}{\mu^2} \le \tfrac{2 k}{\mu^2},\\
\theta_k \gamma_k^2 &= \tfrac{(k + t_0 + 1) ~  (k + t_0)}{\mu^2 (  k+t_0 -1 )^2} \le \tfrac{2}{\mu^2}.
\end{align*}
\end{proof}

In view of Corollary \ref{step_size_stoch_strong_mon_k_unknown},
the number of iterations performed by the SOE method to find a solution $\bar x \in X$ s.t. $\mathbb{E}[V(\bar x,x^*)] \le \epsilon$ is bounded by 
$${\cal O}\{\max( \tfrac{L \sqrt{V(x_1,x^*)}}{\mu \sqrt{\epsilon}},  \tfrac{\sigma^2 }{\mu^2 \epsilon})\}.$$
In view of the above result one can expect the benefit of reducing the variance $\sigma$ per iteration by increasing the size of $m_t$
in terms of the resulting convergence rate. 

Note that when $\sigma = 0$ (i.e., the deterministic case), the convergence rate achieved in Corollary \ref{step_size_stoch_strong_mon_k_unknown} is not linear and hence
not optimal. Assuming that the total number of iterations $k$ is given in advance, we can select a novel stepsize policy that  improves this convergence rate.

\begin{corollary}
\label{step_size_stoch_strong_mon_k_known}
Consider the single-oracle setting with $m_t = 1$. 
If $k$ is fixed,
\[
\gamma_t =\gamma= \min\{\tfrac{1}{4L},\tfrac{q\log k}{\mu k}\},~ \theta_t  = (2\mu \gamma+1)^{t},~ \lambda_t=\tfrac{1}{2\mu\gamma+1}, ~ \mbox{with}~ q = 1+\tfrac{\log(  \tfrac{  \mu^2V(x_1,x^*) } {\sigma^2})}{\log k}
\]
then
\begin{align*}
\bbe[V(x_{k+1},x^*)] 
   \leq 2\big(1+\tfrac{\mu}{2L} \big)^{-k}V(x_1,x^*) + \tfrac{(2 + 8 q \log k) \sigma^2}{\mu^2 k} +\tfrac{4q^2 (\log k)^2 \sigma^2}{\mu^2k^2}.
\end{align*}
\end{corollary}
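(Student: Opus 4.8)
The plan is to invoke Theorem~\ref{the_stoch_GMVI} with the constant stepsize policy $\gamma_t = \gamma$, $\theta_t = (2\mu\gamma+1)^t$, $\lambda_t = (2\mu\gamma+1)^{-1}$, which is exactly the policy in the statement, and then estimate each of the three terms appearing on the right-hand side of that theorem's bound. First I would verify that the hypotheses of Theorem~\ref{the_stoch_GMVI} hold: condition \eqnok{eqn:OElambda} is immediate from $\theta_{t+1}\gamma_{t+1}\lambda_{t+1} = (2\mu\gamma+1)^{t+1}\gamma(2\mu\gamma+1)^{-1} = (2\mu\gamma+1)^t\gamma = \theta_t\gamma_t$; condition \eqnok{gamma_strong_mon_stoch} holds with equality since $\theta_{t-1}(2\mu\gamma_{t-1}+1) = (2\mu\gamma+1)^{t-1}(2\mu\gamma+1) = \theta_t$; condition \eqnok{gamma_strong_mon_stoch_k}, i.e. $8L^2\gamma^2\le 1$, follows from $\gamma\le\tfrac{1}{4L}$ because then $8L^2\gamma^2\le 8L^2/(16L^2)=\tfrac12\le 1$; and condition \eqnok{split_8}, i.e. $\theta_{t-1}\ge 16L^2\gamma_t^2\lambda_t^2\theta_t$, becomes $(2\mu\gamma+1)^{t-1}\ge 16L^2\gamma^2(2\mu\gamma+1)^{-2}(2\mu\gamma+1)^t = 16L^2\gamma^2(2\mu\gamma+1)^{t-1}$, which reduces to $16L^2\gamma^2\le 1$, again guaranteed by $\gamma\le\tfrac{1}{4L}$.

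Next I would substitute into the conclusion of Theorem~\ref{the_stoch_GMVI}. With $\sigma_t^2=\sigma^2$ (single oracle, $m_t=1$) and $\theta_k(2\mu\gamma_k+\tfrac12)\ge\tfrac12\theta_k$, the bound yields
\[
\tfrac{\theta_k}{2}\,\bbe[V(x_{k+1},x^*)] \le \theta_1 V(x_1,x^*) + 8\sigma^2\gamma^2\tsum_{t=1}^k\theta_t + 2\theta_k\gamma^2\sigma^2,
\]
so dividing by $\theta_k/2$ gives $\bbe[V(x_{k+1},x^*)] \le 2\tfrac{\theta_1}{\theta_k}V(x_1,x^*) + 16\sigma^2\gamma^2\tfrac{1}{\theta_k}\tsum_{t=1}^k\theta_t + 4\gamma^2\sigma^2$. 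Since $\theta_1/\theta_k = (2\mu\gamma+1)^{1-k}\le (2\mu\gamma+1)^{-k}\cdot(2\mu\gamma+1)$; more cleanly, $\theta_1/\theta_k\le(2\mu\gamma+1)^{-(k-1)}$, and using $\gamma\le\tfrac1{4L}$ so $2\mu\gamma+1\le 1+\tfrac{\mu}{2L}$ this is at most... here I need to be slightly careful: the claimed first term is $2(1+\tfrac{\mu}{2L})^{-k}V(x_1,x^*)$, so I would write $\theta_1/\theta_k = (2\mu\gamma+1)^{-(k-1)}$ and bound $2(2\mu\gamma+1)^{-(k-1)}\le 2(2\mu\gamma+1)^{-k}(2\mu\gamma+1)$; since $(2\mu\gamma+1)\le 2$ this is $\le 4(2\mu\gamma+1)^{-k}$, but to match the factor $2$ I would instead use the alternative bound $\gamma\le\tfrac{q\log k}{\mu k}$ only where needed and keep the $(2\mu\gamma+1)^{-k}$ form — the key geometric-series estimate is $\tfrac1{\theta_k}\tsum_{t=1}^k\theta_t = \tsum_{t=1}^k(2\mu\gamma+1)^{t-k}\le\tsum_{j=0}^\infty(2\mu\gamma+1)^{-j} = \tfrac{2\mu\gamma+1}{2\mu\gamma}\le\tfrac1{\mu\gamma}+1$ (using $2\mu\gamma+1\le 2$ for the last step, or simply $\le\tfrac{1}{\mu\gamma}$ up to the additive $\tfrac12$).

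Finally I would assemble the three pieces and substitute $\gamma$. The exponential term is $\le 2(1+\tfrac{\mu}{2L})^{-k}V(x_1,x^*)$ using $\gamma\ge$ the min's first branch whenever it is active, and when $\gamma=\tfrac{q\log k}{\mu k}$ the term $(2\mu\gamma+1)^{-k}\le e^{-2\mu\gamma k}=e^{-2q\log k}=k^{-2q}$ is even smaller so it is absorbed. The variance term $16\sigma^2\gamma^2\cdot\tfrac1{\mu\gamma} = \tfrac{16\sigma^2\gamma}{\mu}\le\tfrac{16\sigma^2 q\log k}{\mu^2 k}$, which together with the $4\gamma^2\sigma^2\le\tfrac{4q^2(\log k)^2\sigma^2}{\mu^2k^2}$ term and a leftover $\tfrac{2\sigma^2}{\mu^2 k}$-type contribution (from carrying the additive $\tfrac12$ / from the $+1$ in the geometric sum, tuned via the constant $q$) reproduces the stated right-hand side $\tfrac{(2+8q\log k)\sigma^2}{\mu^2 k}+\tfrac{4q^2(\log k)^2\sigma^2}{\mu^2k^2}$. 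The main obstacle is bookkeeping: tracking the numerical constants through the division by $\theta_k/2$, the geometric-series bound, and the two cases for $\gamma$ so that they line up exactly with the coefficients $2$, $8q$, and $4q^2$ in the statement, and confirming that the role of the particular choice $q = 1+\log(\mu^2 V(x_1,x^*)/\sigma^2)/\log k$ is precisely to balance the $(1+\tfrac{\mu}{2L})^{-k}$ term against the $\sigma^2/(\mu^2 k)$ term — it is not needed for correctness of the displayed inequality per se but is what makes the overall rate near-optimal, and I would remark on this rather than belabor it.
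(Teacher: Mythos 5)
Your overall route is the same as the paper's: invoke Theorem~\ref{the_stoch_GMVI} with the stated constant policy and estimate the three terms on its right-hand side, and your verification of conditions \eqnok{eqn:OElambda}, \eqnok{split_8}, \eqnok{gamma_strong_mon_stoch} and \eqnok{gamma_strong_mon_stoch_k} is correct. However, two of your term estimates do not go through as written. First, your treatment of the exponential term is backwards. When $\gamma=\tfrac{q\log k}{\mu k}<\tfrac{1}{4L}$ we have $2\mu\gamma+1<1+\tfrac{\mu}{2L}$, so $(2\mu\gamma+1)^{-k}>(1+\tfrac{\mu}{2L})^{-k}$: the term is \emph{larger} in that branch, not ``even smaller,'' and cannot be absorbed into $2(1+\tfrac{\mu}{2L})^{-k}V(x_1,x^*)$. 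Your auxiliary inequality $(2\mu\gamma+1)^{-k}\le e^{-2\mu\gamma k}$ is also reversed: $(1+x)^k\le e^{kx}$ gives $(1+x)^{-k}\ge e^{-kx}$. The correct argument bounds $(2\mu\gamma+1)^{-k}\le k^{-q}$ (e.g.\ via $1+x\ge e^{x/2}$ on the relevant range) and then uses the \emph{specific} value $q=1+\log(\mu^2V(x_1,x^*)/\sigma^2)/\log k$, which gives $k^{-q}=\tfrac{1}{k}\cdot\tfrac{\sigma^2}{\mu^2 V(x_1,x^*)}$ and hence $2(2\mu\gamma+1)^{-k}V(x_1,x^*)\le 2(1+\tfrac{\mu}{2L})^{-k}V(x_1,x^*)+\tfrac{2\sigma^2}{\mu^2 k}$. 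So the choice of $q$ is not merely a rate-balancing device: it is exactly what produces the additive $2$ in the coefficient $2+8q\log k$, and your closing claim that it ``is not needed for correctness of the displayed inequality'' is wrong.

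Second, your variance-sum estimate overshoots by a factor of two. Discarding $\lambda_t^2\le 1$ and dividing by $\theta_k/2$ leaves you with $16\sigma^2\gamma^2\cdot\theta_k^{-1}\tsum_{t=1}^k\theta_t\le \tfrac{16\sigma^2\gamma}{\mu}\le\tfrac{16q\log k\,\sigma^2}{\mu^2k}$, which does not fit under the stated $8q\log k$. To land on $8$ you must keep $\lambda_t^2=(2\mu\gamma+1)^{-2}$, i.e.\ write $\theta_t\gamma_t^2\lambda_t^2=\tfrac{\theta_{t-1}^2\gamma_{t-1}^2}{\theta_t}=\gamma^2(2\mu\gamma+1)^{t-2}$, sum the geometric series to get at most $\tfrac{\gamma\,\theta_{k-1}}{2\mu}$, and only then divide by $\theta_k(2\mu\gamma+\tfrac12)$; this yields $\tfrac{8\sigma^2\gamma}{\mu}\le\tfrac{8q\log k\,\sigma^2}{\mu^2 k}$ as in the paper. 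The remaining term $4\gamma^2\sigma^2\le\tfrac{4q^2(\log k)^2\sigma^2}{\mu^2k^2}$ you handle correctly.
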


\begin{proof}
Note that \eqnok{eqn:OElambda} holds by the definition of $\lambda_t$.
Observe that
\[
8 L^2 \gamma_t^2  \leq  \tfrac{8L^2}{16 L^2} < 1, \ \ \mbox{and} \ \ \
 16 L^2 \gamma_t^2 \lambda_t^2 \theta_t = 
  16 L^2 \tfrac{\theta_{t-1}^2 \gamma_{t-1}^2}{\theta_t} \le 16 L^2 \gamma_{t-1}^2 \theta_{t-1} \le \theta_{t-1},
\]
and thus both  \eqnok{split_8} and \eqnok{gamma_strong_mon_stoch_k} hold.
Also, \eqnok{gamma_strong_mon_stoch} holds, due to
$
\theta_{t-1} (2 \mu \gamma_{t-1} + 1) =  \theta_t.
$
The result then follows from Theorem~\ref{the_stoch_GMVI} and the following 
 calculations.
\begin{align*}
\tfrac{\theta_k^{-1}\theta_1 }{2\mu \gamma + \tfrac{1}{2}}V(x_1,x^*)&\leq 2(2\mu\gamma+1)^{-k}V(x_1,x^*) \leq 2(1+\tfrac{\mu}{2L})^{-k}V(x_1,x^*) + \tfrac{2V(x_1,x^*)}{k^{q}}\\
&=2(1+\tfrac{\mu}{2L})^{-k}V(x_1,x^*) + \tfrac{2\sigma^2}{\mu^2k}\\
 \tfrac{\theta_k^{-1}\sigma^2}{2\mu\gamma+\tfrac{1}{2}}\tsum_{t=1}^{k} \theta_t \gamma_t^2 \lambda_t^2 &=\tfrac{\theta_k^{-1}\sigma^2}{2\mu\gamma+\tfrac{1}{2}} 	\tsum_{t=1}^{k} \tfrac{\theta_{t-1}^2 \gamma_{t-1}^2}{\theta_t} 
  \leq \tfrac{\gamma \sigma^2}{\mu} \leq \tfrac{\sigma^2q\log k}{\mu^2k}.
\end{align*}
\end{proof}

In view of corollary \ref{step_size_stoch_strong_mon_k_known},
the number of iterations performed by the SOE method to find a solution $\bar x \in X$ s.t. $\mathbb{E}[V(\bar x,x^*)] \le \epsilon$ is bounded by 
$${\cal O}\{ \max( \tfrac{L }{\mu} \log\tfrac{V(x_1,x^*)}{\epsilon},  \tfrac{\sigma^2 }{\mu^2 \epsilon} \log\tfrac{1}{\epsilon}) \}.$$
This complexity bound is nearly optimal, up to a logarithmic factor, for solving GSMVI problems. 
In order to obtain the optimal convergence for GSMVIs,
we need to develop a more advanced stepsize policy obtained by properly resetting the iteration
index to zero for the stepsize policy in Corollary~\ref{step_size_stoch_strong_mon_k_unknown}.
More specifically, the OE iterations will be grouped into epochs indexed by $s$, and each epoch contains
$k_s$ iterations.
A local iteration index $\tilde t$, which is set to $0$ whenever a new epoch starts,
will take place of $t$ in the definitions of $\gamma_t$ and $\theta_t$ in Corollary~\ref{step_size_stoch_strong_mon_k_unknown}. 

\begin{corollary}\label{restart_stepsize}
	Consider the single-oracle setting with $m_t = 1$. Set $t_0=\tfrac{4L}{\mu}$, and let
	$$k_s =\lc \max\{(2\sqrt{2}-1)t_0+4, ~\tfrac{2^{s+6}\sigma^2}{\mu^2 V(x_1,x^*)}\} \rc,~s\in \mathbb{Z}^+, ~~ K_0=0, ~~\mbox{and}~~K_s=\tsum_{s'=1}^{s}k_{s'}.
	$$
	For $t=1,2,...,$ introduce the epoch index $\tilde s$ and local iteration index $\tilde t$ such that
	$$\tilde s = \argmax_{\{s\in \mathbb{Z}^+\}}\mathbbm{1}_{\{K_{s-1}<t\le K_s\}}~~\mbox{and}~~
	\tilde t:=t-K_{\tilde s-1}.$$
	For the stepsize policy
	$$\gamma_t = \tfrac{1}{\mu(t_0+\tilde t-1)},~~~\theta_t = (\tilde t+t_0+1)(\tilde t+t_0) ,~~\mbox{and}~~\lambda_t = \begin{cases}
	& 0, ~~~~~~~~~~~\tilde t = 1, \\
	&\tfrac{\theta_{t-1}\gamma_{t-1}}{\theta_{t}\gamma_{t}},~~~   \tilde t\ge 2,
	\end{cases}$$
	it holds that
	$
	\bbe[V(x_{K_s+1},x^*)]\leq 2^{-s}V(x_1,x^*)
	$ for any $s \ge 1$.
\end{corollary}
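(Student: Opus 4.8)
The plan is to prove the claim by induction on the epoch index $s$, using Corollary~\ref{step_size_stoch_strong_mon_k_unknown} as the engine inside each epoch, applied with the ``initial point'' being the final iterate of the previous epoch. The key observation is that within epoch $s$, the stepsize policy uses exactly the parameters of Corollary~\ref{step_size_stoch_strong_mon_k_unknown} with the local index $\tilde t$ in place of $t$, and the resetting $\lambda_t = 0$ at $\tilde t = 1$ ensures that the extrapolation term does not carry across epoch boundaries (so each epoch is a genuine fresh run of SOE). Therefore Corollary~\ref{step_size_stoch_strong_mon_k_unknown} applies verbatim to epoch $s$ with $k_s$ iterations and starting error $\bbe[V(x_{K_{s-1}+1}, x^*)]$, giving
\[
\bbe[V(x_{K_s+1},x^*)] \le \tfrac{2(t_0+1)(t_0+2)}{(k_s+t_0+1)(k_s+t_0)}\,\bbe[V(x_{K_{s-1}+1},x^*)] + \tfrac{8(4k_s+1)\sigma^2}{\mu^2(k_s+t_0+1)(k_s+t_0)}.
\]

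The inductive step then reduces to verifying two numerical inequalities on $k_s$. First, I would show that the choice $k_s \ge (2\sqrt2 - 1)t_0 + 4$ forces $\tfrac{2(t_0+1)(t_0+2)}{(k_s+t_0+1)(k_s+t_0)} \le \tfrac14$; indeed $(k_s+t_0)(k_s+t_0+1) \ge (2\sqrt2\, t_0 + 4)(2\sqrt2\, t_0 + 5) \ge 8(t_0+1)(t_0+2)$ after expanding (the $+4$ and $+5$ give enough slack to absorb the constant and cross terms), so this factor times the inductive hypothesis $2^{-(s-1)}V(x_1,x^*)$ contributes at most $\tfrac14 \cdot 2^{-(s-1)}V(x_1,x^*) = \tfrac12 \cdot 2^{-s}V(x_1,x^*)$. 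Second, I would show that $k_s \ge \tfrac{2^{s+6}\sigma^2}{\mu^2 V(x_1,x^*)}$ forces the noise term $\tfrac{8(4k_s+1)\sigma^2}{\mu^2(k_s+t_0+1)(k_s+t_0)}$ to be at most $\tfrac12 \cdot 2^{-s}V(x_1,x^*)$: bounding $4k_s+1 \le 5k_s$ and $(k_s+t_0+1)(k_s+t_0) \ge k_s^2$ this term is at most $\tfrac{40\sigma^2}{\mu^2 k_s} \le \tfrac{40\sigma^2}{\mu^2}\cdot\tfrac{\mu^2 V(x_1,x^*)}{2^{s+6}\sigma^2} = \tfrac{40}{64}\cdot 2^{-s}V(x_1,x^*) < \tfrac12\cdot 2^{-s}V(x_1,x^*)$. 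Adding the two halves gives $\bbe[V(x_{K_s+1},x^*)] \le 2^{-s}V(x_1,x^*)$, completing the induction; the base case $s=1$ is just the first application of the same estimate with starting error $V(x_1,x^*)$.

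One technical point to handle carefully is the application of Corollary~\ref{step_size_stoch_strong_mon_k_unknown} to a later epoch: its statement is written for a run started at a deterministic $x_1$, whereas $x_{K_{s-1}+1}$ is random. I would address this by conditioning on the history through the end of epoch $s-1$ (the sigma-algebra generated by $\bar\xi_1,\ldots,\bar\xi_{K_{s-1}}$), noting that the samples drawn during epoch $s$ are independent of this history, so Corollary~\ref{step_size_stoch_strong_mon_k_unknown} applies conditionally with $V(x_1,x^*)$ replaced by the (now deterministic, given the conditioning) quantity $V(x_{K_{s-1}+1},x^*)$; taking total expectation and invoking the tower property then yields the displayed recursion in expectation. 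I expect this bookkeeping — making precise that each epoch is an independent fresh instance of SOE and that the conditional version of the corollary is legitimate — to be the only real subtlety; the rest is the routine arithmetic on $k_s$ sketched above. It is also worth remarking that the ceiling in the definition of $k_s$ only increases it, so both numerical inequalities are preserved, and that the epoch length grows geometrically in $s$, which is what makes the total iteration count $K_s$ match the claimed optimal $\mathcal{O}\{(L/\mu)\log(1/\epsilon) + \sigma^2/(\mu^2\epsilon)\}$ complexity (this last complexity computation is a corollary and need not be part of proving the stated bound).
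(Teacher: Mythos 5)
Your overall strategy is exactly the paper's: apply Corollary~\ref{step_size_stoch_strong_mon_k_unknown} epoch-by-epoch (conditioning on the history so that $x_{K_{s-1}+1}$ plays the role of the initial point, then taking total expectations), obtain the recursion
\[
\bbe[V(x_{K_s+1},x^*)] \le \tfrac{2(t_0+1)(t_0+2)}{(k_s+t_0+1)(k_s+t_0)}\,\bbe[V(x_{K_{s-1}+1},x^*)] + \tfrac{8(4k_s+1)\sigma^2}{\mu^2(k_s+t_0+1)(k_s+t_0)},
\]
and close an induction by bounding each of the two terms by $\tfrac12\cdot 2^{-s}V(x_1,x^*)$. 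Your contraction-factor estimate is fine: $(2\sqrt2\,t_0+4)(2\sqrt2\,t_0+5)=8t_0^2+18\sqrt2\,t_0+20\ge 8t_0^2+24t_0+16=8(t_0+1)(t_0+2)$, so that factor is indeed at most $\tfrac14$. Your explicit handling of the conditioning/tower-property point is a welcome addition that the paper leaves implicit.

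However, there is a genuine arithmetic error in your noise-term bound: you conclude with $\tfrac{40}{64}\cdot 2^{-s}V(x_1,x^*) < \tfrac12\cdot 2^{-s}V(x_1,x^*)$, but $\tfrac{40}{64}=\tfrac58>\tfrac12$, so this inequality is false and the induction as written yields only $\bigl(\tfrac12+\tfrac58\bigr)2^{-s}V(x_1,x^*)=\tfrac98\cdot 2^{-s}V(x_1,x^*)$, which does not close. The loss comes from the crude bounds $4k_s+1\le 5k_s$ and $(k_s+t_0+1)(k_s+t_0)\ge k_s^2$. The paper's sharper route: since $t_0=4L/\mu\ge\tfrac14$, one has $8(4k_s+1)=32k_s+8\le 32(k_s+t_0)$, hence
\[
\tfrac{8(4k_s+1)\sigma^2}{\mu^2(k_s+t_0+1)(k_s+t_0)}\le \tfrac{32\sigma^2}{\mu^2(k_s+t_0+1)}\le \tfrac{32\sigma^2}{\mu^2 k_s}\le \tfrac{32}{2^{s+6}}V(x_1,x^*)=\tfrac12\cdot 2^{-s}V(x_1,x^*),
\]
which restores the induction. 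With that one-line repair your argument coincides with the paper's proof.
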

\begin{proof}
	First we note  that in each epoch we use the stepsize policy of Corollary \ref{step_size_stoch_strong_mon_k_unknown}. This enables us to infer that, for $s=1,2,\ldots,$
	\begin{align} \label{convergence_epoch}
	\bbe[V(x_{K_{s}+1},x^*)] 
	\leq \tfrac{2( t_0 + 1)  (t_0+2) \bbe[V(x_{K_{s-1}+1},x^*)]}{ (k_s + t_0 + 1)  (k_s + t_0)}  +  \tfrac{8(4 k_s+1) \sigma^2}{\mu^2  (k_s + t_0 + 1)  (k_s + t_0)}.
	\end{align}
	As such by taking the specification  of $k_s$ into account, we obtain
	\begin{align*}
	\bbe[V(x_{K_{1}+1},x^*)] 
	\leq &~\tfrac{2( t_0 + 1)  (t_0+2) V(x_{1},x^*)}{ (k_1 + t_0 + 1)  (k_1 + t_0)}  +  \tfrac{8(4 k_1+1) \sigma^2}{\mu^2  (k_1 + t_0 + 1)  (k_1 + t_0)}\\
	\leq &~ \tfrac{2( t_0 + 1)  (t_0+2) V(x_{1},x^*)}{ (2\sqrt{2}t_0 + 5)  (2\sqrt{2}t_0+4)} +  \tfrac{32 \sigma^2}{\mu^2  (k_1 + t_0 + 1) }\\
	\leq & ~ \tfrac{V(x_1,x^*)}{4}+\tfrac{V(x_1,x^*)}{4}=\tfrac{V(x_1,x^*)}{2}.
	\end{align*}
	The desired convergence result follows  by recursively using 	
	\eqref{convergence_epoch}.
\end{proof}

In view of corollary \ref{restart_stepsize},
the number of epochs performed by the SOE method to find a solution $\bar x \in X$ s.t. $\mathbb{E}[V(\bar x,x^*)] \le \epsilon$ is bounded by $\log_2 (V(x_1, x^*)/\epsilon)$. 
Then together with the length of each epoch, the total number of iterations is bounded by
$${\cal O}\{ \max( \tfrac{L }{\mu} \log_2\tfrac{V(x_1,x^*)}{\epsilon},  \tfrac{\sigma^2 }{\mu^2 \epsilon} ) \},$$
which is optimal for solving GSMVI problems. Note also that
to simplify the implementation, we can replace the quantity $\sigma^2/(\mu^2 V(x_1,x^*))$
in the definition of $k_s$ by some estimation or simply by ${\cal O}(1)$.

\subsection{Convergence for stochastic GMVIs}

In this subsection, we consider stochastic generalized monotone VIs which satisfy \eqnok{Lipschitz}, \eqnok{G_monotone}
with $\mu = 0$ and \eqnok{omega_Lip}. Our goal is to show the SOE method is robust in the sense that it converges when the
modulus $\mu$ is really small. We use constant size mini-batch method whose operator is defined in
\eqnok{mini_batch_operator}. Assuming the norm $\|\cdot\|_*$ is Euclidean, we then conclude from \eqnok{bound_varieance_minibatch} 
that  $\bbe[\|\tilde F(x_t)-F(x_t)\|_*^2]\leq \tilde \sigma^2:=\tfrac{\sigma^2}{m}$, where $m$ is the constant mini-batch size.
 Throughout this subsection we assume that the distance generating function $\o$ is differentiable and
satisfies \eqnok{omega_Lip}.

We define the output solution of OE method as $x_{R+1}$, where $R$ is uniformly chosen from $\{2, 3, \ldots, k\}$.
Lemma~\ref{lemma_bnd_res_SOE} provides a technical result regarding
the relation between the residual of $x_{R+1}$ and the summation of squared distances $\tsum_{t=1}^k\bbe[ \|x_{t+1}- x_t\|^2]$. 

\begin{lemma} \label{lemma_bnd_res_SOE}
	Let $x_t$, $t=1, \ldots, k+1$, be generated by the SOE method in Algorithm~\ref{alg:stochastic} and fix the mini-batch size $m$.
	If
	\beq \label{cond_rel_two_iterate_SOE}
	\tsum_{t=1}^k \bbe[\|x_{t+1}-x_t\|^2] \le \tilde{\delta}
	\eeq 
and $R$ is uniformly chosen from $\{2, 3, \ldots, k\}$,	then 
\textcolor{black}{
	\[
	\bbe[\res(x_{R+1})^2] \le[4(1+\lambda_R)^2 + 4\lambda_R^2]\tilde \sigma^2 + 8\big[(L + \tfrac{L_\o}{\gamma_R})^2 + L^2\lambda_R^2\big]\tfrac{\tilde \delta}{k-1},
	\]}
	where $\res(\cdot)$ is defined \eqnok{def_res}.
\end{lemma}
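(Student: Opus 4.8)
The plan is to mimic the proof of the deterministic result in Lemma~\ref{lemma_bnd_res}, but carefully tracking the stochastic error terms $\delta_t = \tilde F(x_t) - F(x_t)$ and then taking expectations. First I would write down the optimality condition of the SOE subproblem \eqnok{stochastic_algorithm_step} at the (random) index $R$: there exists $\hat\delta_R \in -N_X(x_{R+1}) - F(x_{R+1})$ with
\[
\hat\delta_R = F(x_R) - F(x_{R+1}) + \lambda_R[F(x_R) - F(x_{R-1})] + \tfrac{1}{\gamma_R}[\nabla\o(x_{R+1}) - \nabla\o(x_R)] + \big(\delta_R + \lambda_R(\delta_R - \delta_{R-1}) \big),
\]
obtained by expanding $\tilde F = F + \delta$ in the three-point inequality. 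Since $\res(x_{R+1}) \le \|\hat\delta_R\|_*$ by the definition \eqnok{def_res}, it suffices to bound $\bbe[\|\hat\delta_R\|_*^2]$.

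Next I would split $\hat\delta_R$ into its "deterministic" part (the first three groups of terms, which is exactly the $\delta_R$ of Lemma~\ref{lemma_bnd_res}) and its "stochastic" part $\delta_R + \lambda_R(\delta_R-\delta_{R-1}) = (1+\lambda_R)\delta_R - \lambda_R\delta_{R-1}$, and use $\|a+b\|_*^2 \le 2\|a\|_*^2 + 2\|b\|_*^2$. For the deterministic part, the bound \eqnok{bnd_delta_R} gives $\|{\cdot}\|_* \le (L + L_\o/\gamma_R)\|x_{R+1}-x_R\| + L\lambda_R\|x_R - x_{R-1}\|$, so squaring and applying $(u+v)^2 \le 2u^2+2v^2$ produces the term $8[(L+L_\o/\gamma_R)^2 + L^2\lambda_R^2]$ times $\max\{\|x_{R+1}-x_R\|^2, \|x_R-x_{R-1}\|^2\}$. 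Taking expectation over the uniform choice of $R \in \{2,\ldots,k\}$ and using that each of $\|x_{t+1}-x_t\|^2$ and $\|x_t-x_{t-1}\|^2$ for $t=2,\ldots,k$ appears among $\{\|x_{s+1}-x_s\|^2 : s=1,\ldots,k\}$, the expectation of $\max\{\|x_{R+1}-x_R\|^2, \|x_R-x_{R-1}\|^2\}$ is at most $\tfrac{1}{k-1}\tsum_{t=1}^k \bbe[\|x_{t+1}-x_t\|^2] \le \tilde\delta/(k-1)$; this yields the second term in the claimed bound. For the stochastic part, $\|(1+\lambda_R)\delta_R - \lambda_R\delta_{R-1}\|_*^2 \le 2(1+\lambda_R)^2\|\delta_R\|_*^2 + 2\lambda_R^2\|\delta_{R-1}\|_*^2$, and since $\bbe[\|\delta_t\|_*^2] \le \tilde\sigma^2$ for all $t$ (by \eqnok{bound_varieance_minibatch} with constant mini-batch size $m$), the overall factor $2$ from the initial split combines to give $[4(1+\lambda_R)^2 + 4\lambda_R^2]\tilde\sigma^2$, matching the first term.

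The main obstacle — or rather the one point requiring care — is the interaction between the random index $R$ and the stochastic errors when taking expectations. Since $R$ is chosen uniformly and independently after the trajectory is generated, one should condition appropriately: $\bbe[\res(x_{R+1})^2] = \tfrac{1}{k-1}\tsum_{t=2}^k \bbe[\res(x_{t+1})^2]$, and then bound each summand using the above decomposition with $R$ replaced by the fixed index $t$. The parameters $\lambda_R$ and $\gamma_R$ appearing in the final bound should strictly be read in this per-$t$ sense (or, under the constant stepsize policies used in the applications, they are simply constants), so one should either state them as fixed constants or keep the bound inside the sum; I would present it with $\lambda_R, \gamma_R$ understood as evaluated at the realized $R$, exactly as the deterministic Lemma~\ref{lemma_bnd_res} does. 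Everything else is the routine combination of $\|a+b\|^2 \le 2\|a\|^2+2\|b\|^2$, the Lipschitz bounds \eqnok{Lipschitz} and \eqnok{omega_Lip}, and the variance bound, so no genuinely new estimate is needed beyond what Lemma~\ref{lemma_bnd_res} already supplies in the deterministic case.
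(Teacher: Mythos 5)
Your proposal follows essentially the same route as the paper: the same decomposition of the optimality-condition residual $\tilde\delta_R$ into the deterministic part of Lemma~\ref{lemma_bnd_res} plus the noise $(1+\lambda_R)\delta_R-\lambda_R\delta_{R-1}$, the same Young-type splitting of the square, the same conditioning on $\mathcal{F}_k$ to exploit the independence of the uniformly chosen $R$, and the same variance bound (the paper just applies the four-term Young inequality in one shot rather than nesting two-term ones). The only slip is your detour through the maximum: the inequality $\bbe\big[\max\{\|x_{R+1}-x_R\|^2,\|x_R-x_{R-1}\|^2\}\big]\le\tfrac{1}{k-1}\tsum_{t=1}^k\bbe[\|x_{t+1}-x_t\|^2]$ is false in general, since a given squared increment can realize the max for two consecutive values of $R$ (take $k=3$ with increments $0,1,0$), so the correct constant there is $\tfrac{2}{k-1}$; with that correction your chain delivers $16\big[(L+\tfrac{L_\o}{\gamma_R})^2+L^2\lambda_R^2\big]\tfrac{\tilde\delta}{k-1}$ rather than the stated $8[\cdots]\tfrac{\tilde\delta}{k-1}$. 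The paper avoids this entirely by never passing to the max: it keeps the coefficient $4\big[(L+\tfrac{L_\o}{\gamma_R})^2+L^2\lambda_R^2\big]$ against the sum $\bbe[\|x_{R+1}-x_R\|^2+\|x_R-x_{R-1}\|^2]\le\tfrac{2\tilde\delta}{k-1}$, and dropping the max from your argument fixes it in the same way.
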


\begin{proof}
	Observe that by the optimality condition of \eqnok{stochastic_algorithm_step}, we have
	\beq \label{opt_det_step_SOE}
	\langle F(x_{R+1}) + \tilde \delta_R, x - x_{R+1} \rangle \ge 0 ,\ \ \forall x \in X,
	\eeq
	with
	\begin{align*}
	\tilde \delta_R := &\tilde F(x_R) -F(x_{R+1}) + \lambda_R [\tilde F(x_R) - \tilde F(x_{R-1})] + \tfrac{1}{\gamma_R} [\nabla \o(x_{R+1}) - \nabla \o(x_{R})]\\
	= &(1+\lambda_R) [\tilde F(x_R)-F(x_R)]+ \lambda_R[\tilde F(x_{R-1}) -F(x_{R-1})]+[F(x_R) -F(x_{R+1})] \\
	&+ \lambda_R [F(x_R) - F(x_{R-1})] + \tfrac{1}{\gamma_R} [\nabla \o(x_{R+1}) - \nabla \o(x_{R})].
	\end{align*}
	By \eqnok{Lipschitz} and \eqnok{omega_Lip}, we have
	\begin{align*}
	\|\tilde \delta_R\|_* 
	\le&~  (1+\lambda_R) \|\tilde F(x_R)-F(x_R)\| +  \lambda_R \|\tilde F(x_{R-1}) -F(x_{R-1})\|+ (L + \tfrac{L_\o}{\gamma_R})\|x_{R+1} - x_R\| + L \lambda_R \|x_R - x_{R-1}\|.
	\end{align*} 
	\textcolor{black}{By using Young's inequality, we obtain
	\begin{align*}
		\|\tilde \delta_R\|_*^2 
		\le&~  4(1+\lambda_R)^2 \|\tilde F(x_R)-F(x_R)\|^2 +  4\lambda_R^2 \|\tilde F(x_{R-1}) -F(x_{R-1})\|^2\\&~+ 4(L + \tfrac{L_\o}{\gamma_R})^2\|x_{R+1} - x_R\|^2 + 4L^2 \lambda_R^2 \|x_R - x_{R-1}\|^2.
	\end{align*} 
	Taking expectation on both sides of the above inquality, 
	then we have
	\begin{align}
	\bbe[\|\tilde \delta_R\|_*^2] \le&~[4(1+\lambda_R)^2 + 4\lambda_R^2]\tilde \sigma^2 + 4 \big[(L + \tfrac{L_\o}{\gamma_R})^2 + L^2\lambda_R^2\big]\big(\bbe[\|x_{R+1} - x_R\|^2]+\bbe[\|x_R - x_{R-1}\|^2]\big). \label{bnd_delta_R_SOE_2} 
	\end{align}
Let $ \mathcal{F}_t = \sigma(x_1, \hdots, x_t)$ denote the sigma field generated by the first $ x_1, \hdots, x_t $ iterates. Since $R$ is uniformly chosen from $\{2, 3,\ldots,k\}$ independently of $ \mathcal{F}_k$, we have
$$
\bbe[\|x_{R+1} - x_R\|^2+\|x_{R-1} - x_R\|^2] = \bbe\big[ \bbe[\|x_{R+1} - x_R\|^2+\|x_{R-1} - x_R\|^2] | \mathcal{F}_k ] \big] \leq   \tfrac{2}{k-1} \tsum_{t=1}^k\bbe[\|x_{t+1}-x_t\|^2].
$$} 
	As such
	\beq \label{bnd_sum_two_diff_SOE}
	\bbe[\|x_{R+1} - x_R\|^2+\|x_{R-1} - x_R\|^2]   \le  \tfrac{2 \tilde \delta}{k-1}.
	\eeq
	We then conclude from the definition of $\res(\cdot)$ 
	in \eqnok{def_res} and relations
	\eqnok{opt_det_step_SOE}, \eqnok{bnd_delta_R_SOE_2}, and \eqnok{bnd_sum_two_diff_SOE} that
	\[\bbe[\res(x_{R+1})^2] =\bbe[\|\tilde \delta_R\|_*^2] \le[4(1+\lambda_R)^2 + 4\lambda_R^2]\tilde \sigma^2 + 8\big[(L + \tfrac{L_\o}{\gamma_R})^2 + L^2\lambda_R^2\big]\tfrac{\tilde \delta}{k-1}.
	\]
\end{proof}

We can now show the convergence of the SOE method by establishing the convergence of $\bbe[\res(x_{R+1})] $.
\begin{theorem}\label{lemma_dist_no_strong_monotone_SOE}
	Let $\{x_t\}$ be generated by Algorithm \ref{alg:stochastic} and $\{\theta_t\} $ be a sequence of nonnegative numbers.
	If the parameters $\{\theta_t\}$, $\{\gamma_t\}$ and $\{\lambda_t\}$ in Algorithm \ref{alg:stochastic} satisfy \eqnok{eqn:OElambda}, \eqnok{split_8}, \eqnok{gamma_strong_mon_stoch_k} and $\theta_{t}\geq \theta_{t-1}$,
	for all $t = 1, \ldots, k$, and fix the mini-batch size $m$, then 
	\begin{align}
	\tsum_{i=1}^k \tfrac{\theta_t}{8}\bbe[\|x_{t+1}-x_t\|^2 ]\le \theta_1 V(x_1,x^*) + \tsum_{i=1}^k 8\theta_t \gamma_t^2\lambda_t^2\tilde \sigma^2 + 4\theta_k\gamma_k^2\tilde \sigma^2. \label{GMVI_result_SOE}
	\end{align}
	In particular, if 
	\beq \label{stepsizeforGMV_no_SOE}
	m=k+1, \\\theta_t = 1,\\ \lambda_t = 1 \ \ \mbox{and} \ \ \gamma_t = \tfrac{1}{4L},
	\eeq
	and $R$ is uniformly chosen from $\{2, 3,\ldots,k\}$,
	then 
\textcolor{black}{
	\beq \label{bnd_res_no_SOE}
	\bbe[\res(x_{R+1})^2] \le \tfrac{20\sigma^2}{k+1}+ 32\big[ (L+4LL_\o)^2 + L^2 \big]\tfrac{2V(x_1,x^*)+\frac{\sigma^2}{L^2}}{k-1},
	\eeq}
	where $\res(\cdot)$ is defined \eqnok{def_res}.
\end{theorem}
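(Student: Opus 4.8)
The plan is to follow the template of the deterministic GMVI analysis in Theorem~\ref{lemma_dist_no_strong_monotone}, but to start from the \emph{stochastic} master estimate in Proposition~\ref{prop_general_stohastic} instead of its deterministic counterpart. Compared with the deterministic case there are only two genuinely new ingredients: controlling the martingale-type error terms built from $\delta_t := \tilde F(x_t) - F(x_t)$ (see \eqnok{def_Delta_F_tilde}), and absorbing the resulting variance terms into the right-hand side. The final assertion \eqnok{bnd_res_no_SOE} is then obtained purely by specialization, combining \eqnok{GMVI_result_SOE} with Lemma~\ref{lemma_bnd_res_SOE}.

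For the main inequality \eqnok{GMVI_result_SOE} I would fix $x=x^*$ in \eqnok{prop_general_stochastic_result} and take the total expectation over $\bar\xi_1,\ldots,\bar\xi_k$. Since $x_t$ is measurable with respect to $\bar\xi_1,\ldots,\bar\xi_{t-1}$ and $\bbe[\delta_t\mid\bar\xi_1,\ldots,\bar\xi_{t-1}]=0$, the tower property gives $\bbe[\langle\delta_t,x_t-x^*\rangle]=0$; this lets me (i) replace $\bbe[\langle\tilde F(x_{t+1}),x_{t+1}-x^*\rangle]$ by $\bbe[\langle F(x_{t+1}),x_{t+1}-x^*\rangle]$ and (ii) collapse the boundary error term to $\bbe[\langle\delta_{k+1}-\delta_k,x_{k+1}-x^*\rangle]=-\bbe[\langle\delta_k,x_{k+1}-x_k\rangle]$. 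For the variance term I use $\bbe[\|\delta_t-\delta_{t-1}\|_*^2]\le 2(\bbe[\|\delta_t\|_*^2]+\bbe[\|\delta_{t-1}\|_*^2])\le 4\tilde\sigma^2$ from \eqnok{bound_varieance_minibatch}. Next, \eqnok{G_monotone} with $\mu=0$ makes each $\langle F(x_{t+1}),x_{t+1}-x^*\rangle$ nonnegative, so those terms are dropped. Telescoping the $\bbe[V(\cdot,x^*)]$ terms using $\theta_t\ge\theta_{t-1}$ isolates $\theta_k\bbe[V(x_{k+1},x^*)]-\theta_1 V(x_1,x^*)$, and combining $\theta_k\bbe[V(x_{k+1},x^*)]$ with $-2L^2\theta_k\gamma_k^2\bbe[\|x_{k+1}-x^*\|^2]$ via \eqnok{strong_convex_V} and \eqnok{gamma_strong_mon_stoch_k} (i.e. $8L^2\gamma_k^2\le1$) shows this combination is nonnegative, hence droppable. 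Finally I convert the retained $\tfrac14\theta_t V(x_t,x_{t+1})$ into $\tfrac18\theta_t\|x_{t+1}-x_t\|^2$ by \eqnok{strong_convex_V}, and dispose of the single surviving error term $\theta_k\gamma_k\langle\delta_k,x_{k+1}-x_k\rangle$ by completing the square against (a portion of) $\theta_k V(x_k,x_{k+1})$, which produces the $4\theta_k\gamma_k^2\tilde\sigma^2$ contribution. Collecting terms yields \eqnok{GMVI_result_SOE}.

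For the residual bound \eqnok{bnd_res_no_SOE}, I would substitute the stepsizes \eqnok{stepsizeforGMV_no_SOE} ($m=k+1$, $\theta_t=1$, $\lambda_t=1$, $\gamma_t=\tfrac1{4L}$) and first verify the hypotheses: \eqnok{eqn:OElambda} is immediate because $\theta_t,\gamma_t,\lambda_t$ are constant with $\lambda_t=1$; \eqnok{split_8} holds with equality since $16L^2\gamma_t^2=1$; \eqnok{gamma_strong_mon_stoch_k} holds since $8L^2\gamma_k^2=\tfrac12\le1$; and $\theta_t$ is trivially nondecreasing. Plugging into \eqnok{GMVI_result_SOE} and using $\tilde\sigma^2=\tfrac{\sigma^2}{k+1}$, the per-iteration variance terms sum to ${\cal O}(\sigma^2/L^2)$, so $\tsum_{t=1}^k\bbe[\|x_{t+1}-x_t\|^2]\le\tilde\delta$ with $\tilde\delta=8V(x_1,x^*)+\tfrac{4\sigma^2}{L^2}$ (up to lower-order terms in $1/k$). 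The claim then follows from Lemma~\ref{lemma_bnd_res_SOE} with $\lambda_R=1$, $\gamma_R=\tfrac1{4L}$ and $R$ uniform on $\{2,3,\ldots,k\}$: its bound becomes $\bbe[\res(x_{R+1})^2]\le 20\tilde\sigma^2 + 8\big[(L+4LL_\o)^2+L^2\big]\tfrac{\tilde\delta}{k-1}$, and substituting $\tilde\sigma^2=\tfrac{\sigma^2}{k+1}$ and the above $\tilde\delta$ gives \eqnok{bnd_res_no_SOE}.

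The main obstacle is the treatment of the boundary error term $-\theta_k\gamma_k\langle\delta_{k+1}-\delta_k,x_{k+1}-x^*\rangle$: a naive Cauchy--Schwarz bound fails because $\|x_{k+1}-x^*\|$ is not a priori controlled in the (possibly unbounded) GMVI setting, so one must exploit the conditional zero-mean structure of $\delta_{k+1}$ and $\delta_k$ to reduce it to $\langle\delta_k,x_{k+1}-x_k\rangle$ and only then absorb it against $V(x_k,x_{k+1})$ through a Young-type inequality. Careful bookkeeping of the numerical constants through the telescoping and the two completion-of-squares steps is the other place where errors creep in.
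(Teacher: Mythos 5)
Your proposal is essentially the paper's own argument: fix $x=x^*$ in the master stochastic estimate, kill the cross terms by conditioning, bound $\bbe[\|\delta_t-\delta_{t-1}\|_*^2]\le 4\tilde\sigma^2$, drop the monotone inner products and the $\theta_kV(x_{k+1},x^*)-{\cal O}(L^2\gamma_k^2)\|x_{k+1}-x^*\|^2$ combination via \eqnok{strong_convex_V} and \eqnok{gamma_strong_mon_stoch_k}, then feed the resulting bound $\tsum_t\bbe[\|x_{t+1}-x_t\|^2]\le 8V(x_1,x^*)+4\sigma^2/L^2$ into Lemma~\ref{lemma_bnd_res_SOE} with $\tilde\sigma^2=\sigma^2/(k+1)$, $\lambda_R=1$, $\gamma_R=1/(4L)$. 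The one place you diverge is the sourcing of the quadratic used to absorb the boundary noise term $\theta_k\gamma_k\langle\delta_k,x_{k+1}-x_k\rangle$. The paper does \emph{not} start from the final statement \eqnok{prop_general_stochastic_result}; it restarts from the intermediate inequality \eqnok{stochatc_bound_general_tmp1} inside the proof of Proposition~\ref{prop_general_stohastic} and redoes the terminal Cauchy--Schwarz with a $\tfrac{1}{16}+\tfrac{1}{16}$ split of the extra $\tfrac{\theta_k}{8}\|x_k-x_{k+1}\|^2$, accepting the weaker coefficient $-4L^2\theta_k\gamma_k^2\|x_{k+1}-x^*\|^2$ (still harmless under $8L^2\gamma_k^2\le 1$) precisely so that a spare $\tfrac{\theta_k}{16}\|x_{k+1}-x_k\|^2$ survives to absorb $\gamma_k\langle\delta_k,x_{k+1}-x_k\rangle$ by Young, yielding the $4\theta_k\gamma_k^2\tilde\sigma^2$ term. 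In your route that spare quadratic has already been consumed inside the proposition, so ``a portion of $\theta_kV(x_k,x_{k+1})$'' must be taken from the very term that is supposed to supply the $t=k$ summand $\tfrac{\theta_k}{8}\bbe[\|x_{k+1}-x_k\|^2]$ on the left of \eqnok{GMVI_result_SOE}; you then land on a coefficient $\tfrac{1}{16}$ for that summand (or a doubled noise constant), i.e.\ a slightly weaker version of \eqnok{GMVI_result_SOE}. This perturbs the constants in \eqnok{bnd_res_no_SOE} by at most a factor of $2$ and does not affect the ${\cal O}(1/k)$ rate, so the argument is sound; but to recover the stated constants you need the paper's re-entry into the proof of the proposition rather than its black-box conclusion. (Separately, the telescoping step as you and the paper both write it really wants $\theta_t\le\theta_{t-1}$ rather than $\theta_t\ge\theta_{t-1}$; this is moot for the constant choice $\theta_t=1$.)
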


\begin{proof} 
	Observe that  \eqnok{stochatc_bound_general_tmp1} still holds. Now
	we bound the terms $- \theta_k\gamma_k \langle \Delta \tilde F_{k+1} , x_{k+1} - x \rangle  +
	\tfrac{\theta_k }{8} \|  x_k - x_{k+1} \|^2 $ on the right hand side of \eqnok{stochatc_bound_general_tmp1} in a slightly different manner than before, 
	\begin{align*}
	&- \gamma_k \langle \Delta \tilde F_{k+1} , x_{k+1} - x \rangle  +
	\tfrac{1 }{8} \|  x_k - x_{k+1} \|^2 
	=  - \gamma_k \langle \Delta  F_{k+1} , x_{k+1} - x \rangle    -  \gamma_k \langle \delta_{k+1} - \delta_k , x_{k+1} - x \rangle
	+ \tfrac{1 }{8} \|  x_k - x_{k+1} \|^2 \\
	&\geq 
	-  \gamma_k L \|  x_k - x_{k+1} \|  \|  x - x_{k+1} \| +
	\tfrac{1 }{16} \|  x_k - x_{k+1} \|^2+
	\tfrac{1 }{16} \|  x_k - x_{k+1} \|^2  -  \gamma_k \langle \delta_{k+1} - \delta_k , x_{k+1} - x \rangle \\
	&\geq  - 4 L^2 \gamma_k^2  \|  x - x_{k+1} \|^2 +
	\tfrac{1 }{16} \|  x_k - x_{k+1} \|^2 -  \gamma_k \langle \delta_{k+1} - \delta_k , x_{k+1} - x \rangle.
	\end{align*}
	Plugging the bound into \eqnok{stochatc_bound_general_tmp1} and rearranging the terms, we observe that
	\begin{align*}
	\tsum_{t=1}^k \big\{  \theta_t \big[ \gamma_t \langle \tilde F(x_{t+1}) , x_{t+1} - x \rangle + V(x_{t+1},x)+ \tfrac{1}{4}  V(x_t, x_{t+1})\big]\big\} - 4 L^2 \theta_k \gamma_k^2 \| x_{k+1} - x \|^2 + \tfrac{\theta_k}{16}\|x_{k+1}-x_k\|^2 \nn\\
	- \theta_k \gamma_k \langle \delta_{k+1} - \delta_k , x_{k+1} - x \rangle\leq \tsum_{t=1}^k \theta_t V(x_t,x) + 2 \tsum_{t=1}^k \left( \theta_t \gamma_t^2 \lambda_t^2 \|\delta_t - \delta_{t-1} \|_*^2 \right). 
	\end{align*}
	Fixing $x=x^*$, taking expectation on both sides of the inequality and using the fact $ \langle F(x_{t+1}) , x_{t+1} - x^* \rangle \ge 0$, we have
	\begin{eqnarray*}
		\tsum_{t=1}^k \big\{  \theta_t \big[ \bbe[V(x_{t+1},x^*)] + \tfrac{1 }{4} \bbe[V(x_t, x_{t+1}])\big]\big\} - 4
		L^2 \theta_k \gamma_k^2 \bbe[\| x_{k+1} - x^* \|^2]
		\\\leq \tsum_{t=1}^k \theta_t \bbe[V(x_t,x^*)] +  \tsum_{t=1}^k 8\theta_t\gamma_t^2\lambda_t^2\tilde \sigma^2
		+ \theta_k \gamma_k \bbe[\langle \delta_{k+1} - \delta_k , x_{k+1} - x^* \rangle] - \tfrac{\theta_k}{16}\bbe[\|x_{k+1}-x_k\|^2].
	\end{eqnarray*}
	By taking into account that
	\begin{align*}
	&\gamma_k \bbe[\langle \delta_{k+1} - \delta_k , x_{k+1} - x^* \rangle] - \tfrac{1}{16}\bbe[\|x_{k+1}-x_k\|^2]\\
	=~&\gamma_k \bbe[\langle  - \delta_k , x_{k+1} - x_k \rangle] - \tfrac{1}{16}\bbe[\|x_{k+1}-x_k\|^2]
	\le 4 \gamma_k^2\|\delta_k\|_*^2\le 4\gamma_k^2\tilde \sigma^2,
	\end{align*}
	we obtain
	\begin{eqnarray*}
		\tsum_{t=1}^k \big\{  \theta_t \big[ \bbe[V(x_{t+1},x^*)] + \tfrac{1 }{4} \bbe[V(x_t, x_{t+1}])\big]\big\} - 4
		L^2 \theta_k \gamma_k^2 \bbe[\| x_{k+1} - x^* \|^2]
		\\\leq \tsum_{t=1}^k \theta_t \bbe[V(x_t,x^*)] +  \tsum_{t=1}^k 8\theta_t\gamma_t^2\lambda_t^2\tilde \sigma^2+ 4\theta_k \gamma_k^2\tilde \sigma^2.
	\end{eqnarray*}
	Moreover, by using the condition \eqnok{gamma_strong_mon_stoch_k}, \eqnok{strong_convex_V}, and $\theta_{t}\geq \theta_{t-1}$, we obtain
	$$
	\tsum_{i=1}^k \tfrac{\theta_t}{8}\bbe[\|x_{t+1}-x_t\|^2 ]\le \theta_1 V(x_1,x^*) + \tsum_{i=1}^k 8\theta_t \gamma_t^2\lambda_t^2\tilde \sigma^2 + 4\theta_k\gamma_k^2\tilde \sigma^2.$$
	Finally, the choice of parameters in \eqnok{stepsizeforGMV_no_SOE} implies that
	$$
	\tsum_{i=1}^k \bbe[\|x_{t+1}-x_t\|^2 ]\le 8V(x_1,x^*) + \tsum_{i=1}^k \tfrac{4}{L^2} \tilde\sigma^2 + \tfrac{2}{L^2}\tilde\sigma^2\le 8V(x_1,x^*)  + \tfrac{4}{L^2}\sigma^2.
	$$
	The results in  \eqnok{bnd_res_no_SOE} readily follow from the previous conclusion and Lemma~\ref{lemma_bnd_res_SOE}.
\end{proof}

\vgap

In view of Theorem~\ref{lemma_dist_no_strong_monotone_SOE}, the SOE method
can find a solution $\bar x \in X$ such that $\bbe[\res(\bar x)^2] \le \epsilon$ in ${\cal O} (1/\epsilon)$
iterations and ${\cal O} (1/\epsilon^2)$ overall sample complexity for solving generalized stochastic monotone VIs. \textcolor{black}{ Our result  outperforms  \cite{iusem2017extragradient}, improving the rate  by a  logarithmic factor}.

\subsection{Convergence for stochastic MVIs}
\textcolor{black}{In this subsection it is assumed that the set $X$ is bounded.}
We start out with an auxiliary result that is helpful to the subsequent developments. 

\begin{lemma} \label{auxiliary_sequence}
	For $t = 1,2, \hdots, k, $ define $\hat{x}_{t+1}$ as 
	$$
	\hat{x}_{t+1} = \arg \min_{x \in X} \langle - \hat{\gamma}_t \delta_t, x \rangle + 
	V(\hat{x}_t,x),
	$$
	where $ \hat{x}_1 \in X$ is an arbitrary point and $ \{\hat{\gamma}_t \}$ a sequence of non-negative numbers. Then
	\begin{equation}
	\label{eqn:9}
	- \tsum_{t=1}^k  \langle  \hat{\gamma}_t \delta_t , \hat{x}_t - x \rangle  \leq
	V(\hat{x}_1,x) + \tsum_{t=1}^k  \tfrac{\hat{\gamma}_t^2}{2} \| \delta_t \|^2_*.
	\end{equation}
\end{lemma}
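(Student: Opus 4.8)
The plan is to recognize that the recursion defining $\hat x_{t+1}$ is simply a prox-mapping (mirror-descent) step with the linear functional $\langle -\hat\gamma_t \delta_t, \cdot\rangle$ in place of the operator-extrapolation term, so that the three-point property underlying Lemma~\ref{lemma_projection} (i.e., Lemma 3.1 of \cite{LanBook2020}) applies verbatim. Concretely, first I would record, for each $t = 1, \ldots, k$ and every $x \in X$, the optimality inequality
\[
\langle -\hat\gamma_t \delta_t, \hat x_{t+1} - x\rangle + V(\hat x_t, \hat x_{t+1}) \le V(\hat x_t, x) - V(\hat x_{t+1}, x).
\]

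Next I would split $\hat x_t - x = (\hat x_t - \hat x_{t+1}) + (\hat x_{t+1} - x)$ and bound the two inner products separately. For the part involving $\hat x_{t+1} - x$ I would use the displayed three-point inequality directly. For the part involving $\hat x_t - \hat x_{t+1}$ I would apply Cauchy--Schwarz and then Young's inequality,
\[
-\langle \hat\gamma_t \delta_t, \hat x_t - \hat x_{t+1}\rangle \le \hat\gamma_t \|\delta_t\|_* \|\hat x_t - \hat x_{t+1}\| \le \tfrac{\hat\gamma_t^2}{2}\|\delta_t\|_*^2 + \tfrac12 \|\hat x_t - \hat x_{t+1}\|^2,
\]
and then invoke \eqref{strong_convex_V}, i.e. $\tfrac12\|\hat x_t - \hat x_{t+1}\|^2 \le V(\hat x_t, \hat x_{t+1})$, to absorb this last term into the $V(\hat x_t, \hat x_{t+1})$ supplied by the three-point inequality. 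Adding the two bounds gives, for each $t$,
\[
-\langle \hat\gamma_t \delta_t, \hat x_t - x\rangle \le V(\hat x_t, x) - V(\hat x_{t+1}, x) + \tfrac{\hat\gamma_t^2}{2}\|\delta_t\|_*^2.
\]

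Finally I would sum this over $t = 1, \ldots, k$; the prox-distance terms telescope, and since $V(\hat x_{k+1}, x) \ge 0$ we get $\tsum_{t=1}^k [V(\hat x_t, x) - V(\hat x_{t+1}, x)] \le V(\hat x_1, x)$, which is precisely \eqref{eqn:9}. There is no substantive obstacle here; the only point that needs a little care is checking that the $V(\hat x_t, \hat x_{t+1})$ term coming from the optimality condition is exactly what is needed to cancel the $\tfrac12\|\hat x_t - \hat x_{t+1}\|^2$ produced by Young's inequality, which is guaranteed by the strong convexity bound \eqref{strong_convex_V}.
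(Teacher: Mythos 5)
Your proposal is correct and follows essentially the same route as the paper's proof: the same three-point optimality inequality, the same split of $\hat x_t - x$ into $(\hat x_t - \hat x_{t+1}) + (\hat x_{t+1} - x)$, the same Cauchy--Schwarz/Young step absorbed by the strong-convexity bound \eqref{strong_convex_V}, and the same telescoping sum. No gaps.
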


\begin{proof}
	The optimality condition is characterized as 
	$$
	\langle - \hat{\gamma}_t \delta_t , \hat{x}_{t+1} - x \rangle + V(\hat{x}_t, \hat{x}_{t+1})
	+V(\hat{x}_{t+1},x) \leq V(\hat{x}_{t},x).
	$$
	As such 
	$$
	\langle - \hat{\gamma}_t \delta_t , \hat{x}_t - x \rangle 
	- \hat{\gamma}_t \| \delta_t \|_*  \| \hat{x}_t - \hat{x}_{t+1} \| + \tfrac{1}{2}
	\| \hat{x}_t - \hat{x}_{t+1} \|^2 +V(\hat{x}_{t+1},x) \leq V(\hat{x}_{t},x) 
	$$
	and therefore 
	$$
	\langle - \hat{\gamma}_t \delta_t , \hat{x}_t - x \rangle 
	+V(\hat{x}_{t+1},x) \leq V(\hat{x}_{t},x) + \tfrac{\hat{\gamma}_t^2}{2} \| \delta_t \|^2_*.
	$$
	Summing up from $t = 1$ to $k$ gives the desired result.
\end{proof}

\begin{proposition} \label{prop_monotone_stohastic}
	Assume $ \max_{x_1,x_2 \in X} V(x_1,x_2) \leq D_X$.
	Let $\{x_t\}$ be generated by the SOE method and set $ \theta_t  = 1$.
	If the parameters in this method satisfy 
	\eqnok{eqn:OElambda}, and 
	$$
	1 \geq 16 L^2 \gamma_t^2 \lambda_t^2 ,  ~~~\gamma_t  = \gamma_{t+1} \lambda_{t+1}
	$$
	for all $t = 5, \ldots, k$, then 
	\beq
	\label{expected_gap_general}
	\mathbb{E}[\gap(\bar{x}_{k+1})]  \leq \frac{1}{(\tsum_{t= \lc \frac{k}{2} \rc}^k \gamma_t)} \big(
	\tfrac{9}{2} ~ \tsum_{t= \lc \frac{k}{2} \rc}^k   \gamma_{t-1}^2  \sigma^2  + \textcolor{black}{\tfrac{15}{4}  } D_X + 
	\tfrac{5}{2}  \gamma_k^2 \sigma^2 +	\gamma_k \gamma_{k-1} \sigma^2 \big).
	\eeq
	In particular by setting  	
	$
	\gamma_t = \tfrac{1}{L \sqrt{t}},
	$
	it follows that 
	\beq
	\label{expected_gap_fixed}
	\mathbb{E}[\gap(\bar{x}_{k+1})]  \leq \tfrac{2L }{\sqrt{k+1}} \bigg(	\tfrac{9}{2} \log{5}~ \tfrac{\sigma^2}{L^2}  + \textcolor{black}{\tfrac{15}{4}} D_X 
	+	\tfrac{7}{k} \tfrac{\sigma^2}{L^2}\bigg).
	\eeq
\end{proposition}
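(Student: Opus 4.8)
The plan is to re-run the telescoping estimate underlying Proposition~\ref{prop_general_stohastic}, but to sum the per-iteration inequality only over the \emph{tail} $t=t_1,\ldots,k$ with $t_1:=\lc k/2\rc$ (so that, for moderately large $k$, $t_1\ge 5$ and the hypotheses on $\{\g_t\},\{\lambda_t\}$ — which for $\theta_t=1$ are precisely \eqnok{eqn:OElambda} and \eqnok{split_8} — are in force on the whole range used), and to take $\bar x_{k+1}:=(\tsum_{t=t_1}^{k}\g_t x_{t+1})/(\tsum_{t=t_1}^{k}\g_t)$. Starting from Lemma~\ref{lemma_projection_stoch} with $\theta_t=1$ and summing from $t_1$ to $k$: the $\Delta V_t$ terms telescope to $V(x_{t_1},x)-V(x_{k+1},x)$; the operator-extrapolation terms $-\g_t\la\Delta\tilde F_{t+1},x_{t+1}-x\ra$ and $\g_t\lambda_t\la\Delta\tilde F_t,x_t-x\ra$ cancel in consecutive pairs because $\g_{t-1}=\g_t\lambda_t$, leaving only the boundary terms $-\g_k\la\Delta\tilde F_{k+1},x_{k+1}-x\ra$ and $\g_{t_1}\lambda_{t_1}\la\Delta\tilde F_{t_1},x_{t_1}-x\ra$; and the quantity $\tsum_{t=t_1}^{k}[\g_t\lambda_t\la\Delta\tilde F_t,x_{t+1}-x_t\ra+V(x_t,x_{t+1})]$ is lower bounded exactly as in the proof of Proposition~\ref{prop_general_stohastic} via \eqnok{Lipschitz}, \eqnok{strong_convex_V}, \eqnok{split_8}, the only change being that, since the sum now starts at $t_1$ and not at $1$, absorbing the cross term at $t=t_1$ leaves an extra deficit $2L^2\g_{t_1}^2\lambda_{t_1}^2\|x_{t_1}-x_{t_1-1}\|^2$ rather than $0$. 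The upshot is that, for every $x\in X$,
\[
\tsum_{t=t_1}^{k}\g_t\la\tilde F(x_{t+1}),x_{t+1}-x\ra+\tfrac14\tsum_{t=t_1}^{k}V(x_t,x_{t+1})\le V(x_{t_1},x)+\mathcal R_k(x)+\mathcal N_k,
\]
where $\mathcal N_k$ collects the squared-noise terms (of order $\tsum_{t=t_1}^{k}\g_{t-1}^2\|\delta_t-\delta_{t-1}\|_*^2$, using $\g_t\lambda_t=\g_{t-1}$, plus a last-iterate $\g_k^2\|\delta_k\|_*^2$ and $\g_k\g_{k-1}\|\delta_k-\delta_{k-1}\|_*^2$) and $\mathcal R_k(x)$ gathers the two boundary contributions.

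Next I would dispose of the boundary and domain-dependent pieces using \emph{only} the diameter bound $\max_{x_1,x_2\in X}V(x_1,x_2)\le D_X$ together with $\|x_1-x_2\|\le\sqrt{2D_X}$ on $X$ (from \eqnok{strong_convex_V}); we have no control on individual early iterates. Splitting $\tilde F=F+\delta$ in the boundary terms, the $F$-differences are controlled by \eqnok{Lipschitz}, and since $16L^2\g_t^2\lambda_t^2\le1$ forces $\g_t\lambda_tL\le\tfrac14$, the deterministic parts of $\mathcal R_k(x)$ and of the deficit $\|x_{t_1}-x_{t_1-1}\|^2$ are each $O(D_X)$; together with the $O(D_X)$ from $V(x_{t_1},x)$, from the crude bound $2L^2\g_k^2\|x_{k+1}-x\|^2\le 2L^2\g_k^2\cdot 2D_X$, and from the slack in the Young's-inequality steps, these assemble into the coefficient $\tfrac{15}{4}$ of $D_X$; the $\delta$-dependent parts of the boundary terms split, again by Young, into parts absorbed against the $\|x_{t+1}-x_t\|^2$ quantities available on the left and parts of the form $\g^2\|\delta\|_*^2$ joining $\mathcal N_k$ (this is where $\tfrac52\g_k^2\s^2$ and $\g_k\g_{k-1}\s^2$ originate). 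To be allowed to take $\max_{x\in X}$ \emph{before} the expectation, the remaining term $-\tsum_{t=t_1}^{k}\g_t\la\delta_{t+1},x_{t+1}-x\ra$ (arising from $\la\tilde F(x_{t+1}),x_{t+1}-x\ra=\la F(x_{t+1}),x_{t+1}-x\ra+\la\delta_{t+1},x_{t+1}-x\ra$) must be decoupled from $x$: introduce the auxiliary sequence of Lemma~\ref{auxiliary_sequence}, $\hat x_{t+1}=\argmin_{x\in X}\la-\g_t\delta_t,x\ra+V(\hat x_t,x)$ anchored at some $\hat x_{t_1+1}\in X$, and write $\la\delta_{t+1},x_{t+1}-x\ra=\la\delta_{t+1},x_{t+1}-\hat x_{t+1}\ra+\la\delta_{t+1},\hat x_{t+1}-x\ra$. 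The first piece is free of $x$ and has zero mean, since $x_{t+1}$ and $\hat x_{t+1}$ are deterministic functions of $\bar\xi_1,\ldots,\bar\xi_t$ and hence independent of $\delta_{t+1}$; the sum of the second pieces is bounded, by \eqnok{eqn:9} re-indexed to start at $t_1+1$, by $V(\hat x_{t_1+1},x)+\tfrac12\tsum_{t=t_1}^{k}\g_t^2\|\delta_{t+1}\|_*^2\le D_X+\tfrac12\tsum_{t=t_1}^{k}\g_t^2\|\delta_{t+1}\|_*^2$.

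Finally I invoke monotonicity \eqnok{monotone}: $\la F(x_{t+1}),x_{t+1}-x\ra\ge\la F(x),x_{t+1}-x\ra$, whence $\tsum_{t=t_1}^{k}\g_t\la F(x_{t+1}),x_{t+1}-x\ra\ge(\tsum_{t=t_1}^{k}\g_t)\la F(x),\bar x_{k+1}-x\ra$. Dropping the nonnegative $\tfrac14\tsum_{t=t_1}^{k}V(x_t,x_{t+1})$, replacing each $V(\cdot,x)$ by $D_X$, taking $\max_{x\in X}$ and then $\bbe[\cdot]$ (the zero-mean terms vanish, and $\bbe\|\delta_t-\delta_{t-1}\|_*^2\le 2\s^2$, $\bbe\|\delta_t\|_*^2\le\s^2$), and dividing by $\tsum_{t=t_1}^{k}\g_t$ gives \eqnok{expected_gap_general}. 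For the choice $\g_t=1/(L\sqrt t)$ one has $\lambda_t=\sqrt{t/(t-1)}$, so $16L^2\g_t^2\lambda_t^2=16/(t-1)\le1$ for $t\ge 17$, which covers the range $t\ge t_1$ used above once $k$ is large enough; then $\tsum_{t=t_1}^{k}\g_t\ge\tfrac1L\int_{t_1}^{k+1}s^{-1/2}\,ds\ge\tfrac{\sqrt{k+1}}{2L}$, $\tsum_{t=t_1}^{k}\g_{t-1}^2\le\tfrac1{L^2}\tsum_{t=t_1}^{k}(t-1)^{-1}\le\tfrac{\log 5}{L^2}$, and $\g_k^2,\g_k\g_{k-1}\le\tfrac1{L^2(k-1)}\le\tfrac2{L^2k}$, and substituting these crude estimates into \eqnok{expected_gap_general} produces \eqnok{expected_gap_fixed}.

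The main obstacle is the tail telescoping itself: summing from $t_1=\lc k/2\rc$ rather than from $1$ produces the lower boundary term $\g_{t_1}\lambda_{t_1}\la\Delta\tilde F_{t_1},x_{t_1}-x\ra$ and the deficit $\|x_{t_1}-x_{t_1-1}\|^2$, which must be absorbed using only the diameter $D_X$ and the parameter inequalities, and then getting all the explicit constants to close; the auxiliary-sequence estimate of Lemma~\ref{auxiliary_sequence} is the standard but indispensable device that permits taking $\max_x$ before the expectation.
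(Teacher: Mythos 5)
Your proposal follows essentially the same route as the paper's proof: summing the three-point inequality over the tail $t=\lc k/2\rc,\ldots,k$, telescoping via $\g_{t-1}=\g_t\lambda_t$, absorbing the two boundary terms and the deficit $\|x_{\bar k}-x_{\bar k-1}\|^2$ using Lipschitzness, Young's inequality and the diameter bound $D_X$, decoupling the $x$-dependent noise terms with the auxiliary sequence of Lemma~\ref{auxiliary_sequence} so that $\max_{x\in X}$ can be taken before the expectation, and finally invoking monotonicity and the stepsize computations for $\g_t=1/(L\sqrt t)$. The only differences are minor bookkeeping choices (where the auxiliary sequence is anchored and how the $\delta$-indices are grouped), so the argument is correct and matches the paper's.
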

\begin{proof}
We start  from \eqnok{opt_stoch_step} and sum from  $t =  \bar{k} =  \lc \frac{k}{2} \rc$ to $k$, while invoking \eqref{eqn:OElambda}.
\begin{align}
	\tsum_{t= \bar{k}}^k    \Delta V_t(x)   \geq   \tsum_{t=\bar{k}}^k \big[  \gamma_t \langle \tilde F(x_{t+1}) , x_{t+1} - x \rangle \big] 
	-  \gamma_k \langle \Delta \tilde F_{k+1} , x_{k+1} - x \rangle    +         \gamma_{\bar{k}}  \lambda_{\bar{k}}    \langle  \Delta \tilde F_{\bar{k}}, x_{\bar{k}} - x \rangle             +   \tilde Q_k,
	\label{3point_stoch_new}
\end{align}	
with
\begin{align*}
	\tilde Q_k &:= \tsum_{t=\bar{k}}^k  
	\left[ \gamma_t \lambda_t \langle\Delta \tilde F_t, x_{t+1} - x_t \rangle +  V(x_t, x_{t+1}) \right].
\end{align*}	
Using  the Lipschitz condition \eqref{Lipschitz}, we can lower bound the term $\tilde Q_k$ as follows
\begin{align*}
	\tilde Q_k
	&\geq   \tsum_{t=\bar{k}}^k   \left[ -    \gamma_t \lambda_t L \|   x_t - x_{t-1} \| 
	\|   x_{t+1} - x_t \|  +   V(x_t, x_{t+1}) +    \gamma_t \lambda_t \langle\delta_t - \delta_{t-1}, x_{t+1} - x_t \rangle\right ] \\
	& \geq   \tsum_{t=\bar{k}}^k \left[-     \gamma_t \lambda_t L \|   x_t - x_{t-1} \| 
	\|   x_{t+1} - x_t \|  +   \tfrac{1}{8} \|  x_t - x_{t+1} \|^2 +   \tfrac{1 }{8} \|  x_t - x_{t-1} \|^2  \right]  +       \tfrac{ 1}{8} \|  x_k - x_{k+1} \|^2   -     \tfrac{ 1}{8} \|  x_{\bar{k}} - x_{{\bar{k}}-1} \|^2   \\
	& \quad +  \tsum_{t=\bar{k}}^k   \left[  \gamma_t \lambda_t \langle\delta_t - \delta_{t-1}, x_{t+1} - x_t \rangle + \tfrac{1 }{8} \|  x_t - x_{t+1} \|^2  \right]   + \tfrac{1}{4} 
	\tsum_{t=\bar{k}}^k  V(x_t, x_{t+1})    \\
	&\geq \tfrac{ 1}{4} \|  x_k - x_{k+1} \|^2  -      \tfrac{ 1}{8} \|  x_{\bar{k}} - x_{{\bar{k}}-1} \|^2  -   
	2 \tsum_{t=\bar{k}}^k \left(  \gamma_t^2 \lambda_t^2 \|\delta_t - \delta_{t-1} \|_*^2 \right) + \tfrac{1}{4} \tsum_{t=\bar{k}}^{k-1}  V(x_t, x_{t+1}),
\end{align*} 
where the individuals steps are similar to the ones encountered in the proof of Proposition \ref{prop_general_stohastic}.
Using the above bound of $\tilde Q_k$ in \eqref{3point_stoch_new}, we obtain
\begin{align}
	\tsum_{t=\bar{k}}^k     \Delta V_t(x)  &\geq \tsum_{t=\bar{k}}^k   \big[  \gamma_t \langle \tilde F(x_{t+1}) , x_{t+1} - x \rangle \big]
	-  \gamma_k \langle \Delta \tilde F_{k+1} , x_{k+1} - x \rangle    +         \gamma_{\bar{k}}  \lambda_{\bar{k}}    \langle  \Delta \tilde F_{\bar{k}}, x_{\bar{k}} - x \rangle  \nn   \\
&	\quad  +  \tfrac{ 1}{4} \|  x_k - x_{k+1} \|^2       -     \tfrac{ 1}{8} \|  x_{\bar{k}} - x_{{\bar{k}}-1} \|^2  -  2 \tsum_{t=\bar{k}}^k \left(  \gamma_t^2 \lambda_t^2 \|\delta_t - \delta_{t-1} \|_*^2 \right) + \tfrac{1}{4} \tsum_{t=\bar{k}}^{k-1}  V(x_t, x_{t+1}).  
\label{intermediate}
\end{align}
Note that 
\begin{eqnarray*}
 - \gamma_k \langle \Delta \tilde F_{k+1} , x_{k+1} - x \rangle  + \tfrac{1 }{4} \|  x_k - x_{k+1} \|^2   &=& - \gamma_k \langle \Delta  F_{k+1} , x_{k+1} - x \rangle    -  \gamma_k \langle \delta_{k+1} - \delta_k , x_{k+1} - x \rangle + \tfrac{1 }{4} \|  x_k - x_{k+1} \|^2  \nn  \\
 & \geq &  	-  \gamma_k L \|  x_k - x_{k+1} \|  \|  x - x_{k+1} \| +  \tfrac{1 }{8} \|  x_k - x_{k+1} \|^2 \nn  \\ &&  - \gamma_k \langle \delta_{k+1}  , x_{k+1} - x \rangle + \gamma_k \langle  \delta_k , x_{k} - x \rangle +
\gamma_k \langle  \delta_k , x_{k+1} - x_k  \rangle  + \tfrac{1 }{8} \|  x_k - x_{k+1} \|^2     \nn \\
& \geq &  - 2 L^2 \gamma_k^2  \|  x - x_{k+1} \|^2    - \gamma_k \langle \delta_{k+1}  , x_{k+1} - x \rangle + \gamma_k \langle  \delta_k , x_{k} - x \rangle  - 2 \gamma_k^2 \| \delta_k \|_*^2.  
\label{first_bound}
\end{eqnarray*}
Similarly 
\begin{eqnarray*}
 \gamma_{\bar{k}}  \lambda_{\bar{k}}    \langle  \Delta \tilde F_{\bar{k}}, x_{\bar{k}} - x \rangle   &=&    \gamma_{\bar{k}}  \lambda_{\bar{k}}    \langle  \Delta  F_{\bar{k}}, x_{\bar{k}} - x \rangle 
 +  \gamma_{\bar{k}}  \lambda_{\bar{k}}    \langle \delta_{\bar{k}} - \delta_{\bar{k}-1} ,   x_{\bar{k}} - x \rangle  \nn  \\
 & \geq & -    \gamma_{\bar{k}}  \lambda_{\bar{k}}    \| x_{\bar{k}} -  x_{\bar{k}-1}  \|    \| x_{\bar{k}} - x \|  +    \gamma_{\bar{k}}  \lambda_{\bar{k}}    \langle \delta_{\bar{k}} - \delta_{\bar{k}-1} ,   x_{\bar{k}}  -x \rangle
 +  \tfrac{1}{4}  \|   x_{\bar{k}} - x_{\bar{k}-1} \|^2 -   \tfrac{1}{4}  \|   x_{\bar{k}} - x_{\bar{k}-1} \|^2 \nn \\
 & \geq &  - 2 L^2  \gamma_{\bar{k}}^2  \lambda_{\bar{k}}^2   \| x_{\bar{k}} - x \|^2  +   \gamma_{\bar{k}}  \lambda_{\bar{k}}    \langle \delta_{\bar{k}}  ,   x_{\bar{k}}  -x \rangle   
 - 2 \gamma_{\bar{k}}^2   \lambda_{\bar{k}}^2  \| \delta_{\bar{k}-1} \|_*^2 - \gamma_{\bar{k}}  \lambda_{\bar{k}}    \langle \delta_{\bar{k}-1}  ,   x_{\bar{k}-1}  -x \rangle      -   \tfrac{1}{4}  \|   x_{\bar{k}} - x_{\bar{k}-1} \|^2 .
	\label{second_bound}
\end{eqnarray*}
By combining the last two bounds  with \eqref{intermediate} it follows that
\begin{align*}
	\tsum_{t=\bar{k}}^k     \Delta V_t(x)  &\geq \tsum_{t=\bar{k}}^k   \big[  \gamma_t \langle \tilde F(x_{t+1}) , x_{t+1} - x \rangle \big]   - 2 L^2 \gamma_k^2  \|  x - x_{k+1} \|^2   - 2 L^2  \gamma_{\bar{k}}^2  \lambda_{\bar{k}}^2   \| x_{\bar{k}} - x \|^2  \nn \\
&\quad     - \gamma_k \langle \delta_{k+1}  , x_{k+1} - x \rangle + \gamma_k \langle  \delta_k , x_{k} - x \rangle  +   \gamma_{\bar{k}}  \lambda_{\bar{k}}    \langle \delta_{\bar{k}}  ,   x_{\bar{k}}  -x \rangle    - \gamma_{\bar{k}}  \lambda_{\bar{k}}    \langle \delta_{\bar{k}-1}  ,   x_{\bar{k}-1}  -x \rangle   \nn  \\
& \quad    
 - 2 \gamma_{\bar{k}}^2   \lambda_{\bar{k}}^2  \| \delta_{\bar{k}-1} \|_*^2    - 2 \gamma_k^2 \| \delta_k \|_*^2   -  2 \tsum_{t=\bar{k}}^k \left(  \gamma_t^2 \lambda_t^2 \|\delta_t - \delta_{t-1} \|_*^2 \right)        -     \tfrac{ 3}{8} \|  x_{\bar{k}} - x_{{\bar{k}}-1} \|^2  
 + \tfrac{1}{4} \tsum_{t=\bar{k}}^{k-1}  V(x_t, x_{t+1}).
\end{align*}
After rearranging terms
\begin{align*}
& 	2 \tsum_{t=\bar{k}}^k \left(  \gamma_t^2 \lambda_t^2 \|\delta_t - \delta_{t-1} \|_*^2 \right)    +   \big( V(x_{\bar{k}} , x)   +    2 L^2  \gamma_{\bar{k}}^2  \lambda_{\bar{k}}^2   \| x_{\bar{k}} - x \|^2 \big)  - 
 \big( V(x_{k+1} , x)   -  2 L^2 \gamma_k^2  \|  x - x_{k+1} \|^2  \big) \geq   \\
&  \tsum_{t=\bar{k}}^k   \big[  \gamma_t \langle F(x_{t+1}) , x_{t+1} - x \rangle \big] +   \tsum_{t=\bar{k}}^{k}   \big[  \gamma_t   \lambda_t \langle \delta_{t} , x_{t} - x \rangle \big]      +  \gamma_k \langle  \delta_k , x_{k} - x \rangle    - \gamma_{\bar{k}}  \lambda_{\bar{k}}    \langle \delta_{\bar{k}-1}  ,   x_{\bar{k}-1}  -x \rangle   \\
&  - 2 \gamma_{\bar{k}}^2   \lambda_{\bar{k}}^2  \| \delta_{\bar{k}-1} \|_*^2    - 2 \gamma_k^2 \| \delta_k \|_*^2          -     \tfrac{ 3}{8} \|  x_{\bar{k}} - x_{{\bar{k}}-1} \|^2  
+ \tfrac{1}{4} \tsum_{t=\bar{k}}^{k-1}  V(x_t, x_{t+1}).
\end{align*}	
	We now utilize Lemma \ref{auxiliary_sequence}. To this end we consider an auxiliary stochastic process $ \hat{x}_t$, initialized at $ \hat{x}_{\bar{k}-1}= x_{\bar{k}-1}$. We add and subtract the term $ \tsum_{t=\bar{k}}^k \big[  \gamma_t  \langle  \delta_{t} ,  \hat{x}_t  - x \rangle \big]$,  while taking the particular selection of algorithmic parameters into account. Furhtermore we set the stepsize $\hat{\gamma}_t$   to 
	$$
	\hat{\gamma}_t = \begin{cases}
	& \gamma_{\bar{k}} \lambda_{\bar{k}},  \qquad  \qquad t = \bar{k}-1,	 \\
	& \gamma_t  \lambda_t , \qquad   ~\qquad t = \bar{k}, \hdots, k-1, \\
	& \gamma_k \lambda_k + \gamma_k, ~~~~~t = k,
	\end{cases}
	$$ 
to obtain
		\begin{eqnarray*}
&&		2 \tsum_{t=\bar{k}}^k \left(  \gamma_t^2 \lambda_t^2 \|\delta_t - \delta_{t-1} \|_*^2 \right) + 2 V(x_{\bar{k}},x) + V(x_{\bar{k}-1},x) + \tsum_{t=\bar{k}-1}^k  \tfrac{\hat{\gamma}_t^2}{2} \| \delta_t \|^2_*   +     \tfrac{ 3}{8} \|  x_{\bar{k}} - x_{{\bar{k}}-1} \|^2  
+ 2 \gamma_{\bar{k}}^2   \lambda_{\bar{k}}^2  \| \delta_{\bar{k}-1} \|_*^2    + 2 \gamma_k^2 \| \delta_k \|_*^2  
   \geq   \nn  \\
&&  \tsum_{t=\bar{k}}^k \big[  \gamma_t \langle  F(x_{t+1}) , x_{t+1} - x \rangle \big] + \tsum_{t=\bar{k}}^k \big[  \gamma_t \lambda_t \langle  \delta_{t} , x_{t} - \hat{x}_t \rangle \big] +  \gamma_k \langle  \delta_k , x_{k} - \hat{x}_k \rangle    - \gamma_{\bar{k}}  \lambda_{\bar{k}}    \langle \delta_{\bar{k}-1}  ,   x_{\bar{k}-1}  -  \hat{x}_{\bar{k}-1} \rangle .
	\end{eqnarray*}	
	Furthermore by using the monotonicity property in \eqnok{monotone} and 
	the definition of $\bar x_{k+1}$ in \eqnok{def_xbar},
	\begin{eqnarray*}
		&& \tsum_{t=\bar{k}}^k  \big[ \gamma_t \langle F(x_{t+1}) , x_{t+1} - x \rangle \big] \ge   \tsum_{t=\bar{k}}^k  \big[ \gamma_t \langle F(x) , x_{t+1} - x \rangle \big]	
		=(\tsum_{t=\bar{k}}^k   \gamma_t )  \langle F(x), \bar{x}_{k+1} - x  \rangle.
	\end{eqnarray*}
	By employing the definition of the gap function and taking expectations, one obtains \ref{expected_gap_general}.  Moreover 
	by setting 
	$
	\gamma_t = \tfrac{1}{L \sqrt{t}}
	$
	and noticing that  for all $ k \geq 5, $
	$
	\tsum_{t=\bar{k}}^k  \gamma_{t-1}^2  \leq \tfrac{\log(5)}{L^2} ,
	$
	$
	\tsum_{t=\bar{k}}^k  \gamma_{t}  \geq   \tfrac{1}{2 L}   \sqrt{t+1}
	$
	\eqref{expected_gap_fixed} follows.
\end{proof}

In view of proposition \ref{prop_monotone_stohastic},
the  SOE method  finds a solution $\bar x \in X$ s.t. $\mathbb{E}[\gap(\bar x)] \le \epsilon$ in 
${\cal O}(  \tfrac{  1  }{\epsilon^2}) $ iterations.

\section{Stochastic Block OE for Deterministic Problems} \label{sec_SBOE}

In this section, we assume $X \subseteq \mathbb{R}^n$ is a nonempty closed convex set with a block structure, i.e., 
$
X=X_1\times X_2\times \cdot\cdot\cdot\times X_b,
$ 
where $ X_i \subseteq \mathbb{R}^{n_i}, i \in [b],$ are closed convex sets with $n_1+n_2+...+n_b=n.$
We introduce the matrices $U_i \in \mathbb{R}^{n\times n_i}, i \in [b], $ satisfying $(U_1,U_2,...,U_b)=I_n,$ so that the $i$-th block of $x \in X$  is given by 
$ x^{(i)}=U_i^\top x.$  Similarly  given  the operator values  at $x \in X$ we
denote by $F_i(x) \equiv U_i^{\top}F(x),i \in [b]$, the corresponding value associated to the 
$i$-th block. The map $F_i : X \rightarrow \mathbb{R}^{n_i}$ is Lipschitz continuous, i.e.,  for some $L_i>0$, 
\begin{equation}
\|F_i(x_1)-F_i(x_2)\|_{i,*} \leq L_i\|x_1-x_2\|, \quad \forall x_1,x_2\in X.
\end{equation}
Let $\bar{L} = \max_{i=1...b}L_i$,  then  $\forall i \in [b]$,
\begin{equation}
\label{lipschitz_SBOE}
\|F_i(x_1)-F_i(x_2)\|_{i,*} \leq \bar{L}\|x_1-x_2\|, \quad \forall x_1,x_2\in X.
\end{equation}
Each Euclidean space $\mathbb{R}^{n_i}, i\in [b]$, is equipped with inner product $\langle\cdot,\cdot\rangle_i$ and norm $\|\cdot\|_{i}$,  ($\|\cdot\|_{i,*}$ is the conjugate norm). 
We define a norm on $X$ as follows: For $ x = (x^{(1)},  \hdots , x^{(b)}) \in X$,
$
\|x\|^2=\|x^{(1)}\|_1^2+...+\|x^{(b)}\|_b^2,
$
and similarly for the conjugate norm $\|y\|_*^2=\|y^{(1)}\|_{1,*}^2+...+\|y^{(b)}\|_{b,*}^2.$
For a given strongly convex function $\omega_i$ on $X_i$, we define the prox-function (Bregman distance) associated with $\omega_i$ as 
\begin{equation}
\label{block prox function}
V_i(x,y):=\omega_i(x)-\omega_i(y)-\langle\omega_i'(y),x-y\rangle, \quad\forall x,y\in X_i,
\end{equation}
where $\omega_i'(y)\in\partial \omega_i(y)$ is an arbitrary subgradient of $\omega_i$ at $y$. For simplicity, let us use the notation $V_i(x,y) \equiv V_i(x^{(i)},y^{(i)}),  \|x-y\|_i  \equiv\|x^{(i)}-y^{(i)}\|_i   ,  i=1,...,b,$ for $x,y \in X$. Given the individual prox-functions $V_i( \cdot, \cdot)$ on  each $X_i$, $i=1, \hdots, b$,
we define
$
V(x,y):=\tsum_{i=1}^b V_i(x^{(i)},y^{(i)}), ~\forall x,y \in X,
$
and $\omega(x) := \tsum_{i=1}^b\omega_i( x^{(i)}),~\forall x\in X$.

With the above definitions and notation, we can formally describe the stochastic block operator extrapolation algorithm.
\begin{algorithm}[H]  \caption{Stochastic Block Operator Extrapolation (SBOE) for Variational Inequalities}  
	\label{alg:stochastic_block}
	\begin{algorithmic} 
		\STATE{Let $x_0 = x_1 \in X$,  the nonnegative parameters $\{ \gamma_t\}, \{\lambda_t\}$ and the positive probabilities $p_i\in (0,1),i=1,...,b,s.t.\sum_{i=1}^bp_i=1$ be given. }
		\FOR{$ t = 1, \ldots, k$}
		\STATE 1. Generate a random variable $i_t$ according to  
		$
		\mathbb{P}\{i_t=i\}=p_i,\quad i=1,...,b.
		$
		\STATE 2. Update $x_t^{(i)}, i=1,...,b,$ by
		\beq \label{SBOE_step}
		x_{t+1}^{(i)}=\left\{
		\begin{aligned}
			& \mathop{\arg\min}_{u\in X_i}\gamma_t\langle F_i(x_{t})+\lambda_t[F_i(x_{t})-F_i(x_{t-1})],u\rangle+V_i(x_{t},u), & i=i_t; \\
			& x^{(i)}_{t}, & i\neq i_t.
		\end{aligned}
		\right.
		\eeq
		\ENDFOR
	\end{algorithmic}
\end{algorithm} 
In each iteration, Algorithm \ref{alg:stochastic_block} only updates the one block that is randomly chosen. For simplicity, the block to be updated is uniformly chosen in each iteration, i.e. $p_i=1/b, ~\forall i\in[b].$ 
In order to simplify the analysis, we introduce for $ t = 2, \ldots, k+1$ the iterates $\{\hat{x}_t \}$, 
where 
\beq
\label{x_hat}
\hat x_{t+1} = \argmin_{x\in X} ~   \gamma_t \big\langle F(x_t) + \lambda_t \big( F(x_t) - F(x_{t-1}) \big) , x \big\rangle + V(x_t, x).
\eeq


\begin{proposition} \label{prop_SBOE}
	Let $\{x_t\}$ be generated by the SBOE method and $\{\theta_t\} $ be a sequence of nonnegative numbers.
	If the parameters in this method satisfy 
	\begin{align} 
	p_{i}=\tfrac{1}{b},~~i =1, \hdots, b, \label{cond_b}\\
	\theta_{t+1}\gamma_{t+1}\lambda_{t+1}=\theta_t\gamma_t b, \label{cond_lambda_BOE}\\
	\theta_{t-1} \geq 4 \bar{L}^2 \gamma_t^2 \lambda_t^2 \theta_t  \label{cond_16}
	\end{align}
	for all $t = 1, \ldots, k$, then for any $x \in X$,
	\begin{align}\label{prop_general_SBOE_result}
	\tsum_{t=1}^{k-1} \big[\theta_t\gamma_tb-\theta_{t+1}\gamma_{t+1}(b-1) \big]\langle F(x_{t+1}) , x_{t+1} - x \rangle+ \theta_k\gamma_kb\langle F(x_{k+1}) , x_{k+1} - x \rangle+\tsum_{t=1}^{k}\theta_tb V(x_{t+1},x)\nn\\
	- 2\theta_k \gamma_k^2bL^2V(x_{k+1},x)
	+\tsum_{t=1}^k \theta_t\tilde  D_t\leq \tsum_{t=1}^k    \theta_tb V(x_t,x)+\theta_1\gamma_1(b-1)\langle F(x_{1}),x_1-x\rangle,
	\end{align}
	where
	\begin{align}\label{tilde_D}
	\tilde D_t& := \gamma_t \langle F(x_t) + \lambda_t \big( F(x_t)-F(x_{t-1})\big), \hat x_{t+1} - [b x_{t+1}-(b-1) x_t] \rangle\nn\\
	&\quad~ + [V(x_t, \hat x_{t+1}) - b V(x_t,x_{t+1})] + [V(\hat x_{t+1},x) - b V(x_{t+1},x) + (b-1) V(x_{t},x)].
	\end{align}
\end{proposition}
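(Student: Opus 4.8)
The plan is to mimic the proof of Proposition~\ref{lemma_dist}, but to run it for the auxiliary ``full update'' $\hat x_{t+1}$ of \eqnok{x_hat} instead of the actual block iterate $x_{t+1}$, and to let the discrepancy between the two be absorbed into the quantities $\tilde D_t$. Since $\hat x_{t+1}$ has exactly the form of the deterministic step \eqnok{deterministic_algorithm_step} (with $x_t,x_{t-1}$ the SBOE iterates), Lemma~\ref{lemma_projection} applies to it verbatim: writing $g_t := F(x_t)+\lambda_t\big(F(x_t)-F(x_{t-1})\big)$, it gives
\begin{equation}\label{plan_tp}
\gamma_t\langle g_t,\hat x_{t+1}-x\rangle + V(x_t,\hat x_{t+1}) + V(\hat x_{t+1},x) \le V(x_t,x),\qquad \forall x\in X.
\end{equation}
The combination $b x_{t+1}-(b-1)x_t$ entering \eqnok{tilde_D} is designed so that its conditional expectation over $i_t$ equals $\hat x_{t+1}$ (using $p_i=1/b$ from \eqnok{cond_b} and the block separability of $V$); this is the reason the residual quantities $\tilde D_t$ can later be controlled in expectation, but Proposition~\ref{prop_SBOE} itself is a purely deterministic identity and uses no probabilistic input beyond the parameter conditions.

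First I would substitute $\hat x_{t+1} = \big(b x_{t+1}-(b-1)x_t\big) + \big(\hat x_{t+1}-b x_{t+1}+(b-1)x_t\big)$ into the inner product of \eqnok{plan_tp}, use $\big(b x_{t+1}-(b-1)x_t\big)-x = b(x_{t+1}-x)-(b-1)(x_t-x)$, and split $V(x_t,\hat x_{t+1})$ and $V(\hat x_{t+1},x)$ by adding and subtracting $b\,V(x_t,x_{t+1})$ and $b\,V(x_{t+1},x)-(b-1)V(x_t,x)$, respectively. After regrouping, the left-over ``full-point'' pieces assemble precisely into $\tilde D_t$ as defined in \eqnok{tilde_D}, and \eqnok{plan_tp} becomes the per-iteration inequality
\begin{equation}\label{plan_star}
\tilde D_t + b\gamma_t\langle g_t,x_{t+1}-x\rangle - (b-1)\gamma_t\langle g_t,x_t-x\rangle + b\,V(x_t,x_{t+1}) + b\,V(x_{t+1},x) \le b\,V(x_t,x).
\end{equation}

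Next I would multiply \eqnok{plan_star} by $\theta_t$, expand $g_t = F(x_t)+\lambda_t\Delta F_t$, substitute $F(x_t)=F(x_{t+1})-\Delta F_{t+1}$, split $\langle\Delta F_t,x_{t+1}-x\rangle = \langle\Delta F_t,x_t-x\rangle + \langle\Delta F_t,x_{t+1}-x_t\rangle$, and sum over $t=1,\dots,k$. Using $x_0=x_1$ (so $\Delta F_1=0$ and $g_1=F(x_1)$) together with \eqnok{cond_lambda_BOE} in the shifted form $\theta_t\gamma_t\lambda_t = b\,\theta_{t-1}\gamma_{t-1}$, the $\langle\Delta F_t,x_t-x\rangle$ contributions telescope against the $-b\,\theta_{t-1}\gamma_{t-1}\langle\Delta F_t,x_t-x\rangle$ produced at the previous index, leaving only the straggler $-b\,\theta_k\gamma_k\langle\Delta F_{k+1},x_{k+1}-x\rangle$; meanwhile the $F(x_{t+1})$ terms recombine into the coefficients $b\theta_t\gamma_t-(b-1)\theta_{t+1}\gamma_{t+1}$ for $t\le k-1$, with the two boundary contributions $b\theta_k\gamma_k\langle F(x_{k+1}),x_{k+1}-x\rangle$ and $(b-1)\theta_1\gamma_1\langle F(x_1),x_1-x\rangle$ that appear in \eqnok{prop_general_SBOE_result}. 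What remains is the block analogue of the quantity $Q_k$ of Proposition~\ref{lemma_dist}, namely $\tilde Q_k := \tsum_{t=1}^k b\theta_t\big[\gamma_t\lambda_t\langle\Delta F_t,x_{t+1}-x_t\rangle + V(x_t,x_{t+1})\big]$.

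Finally I would lower-bound $\tilde Q_k$ exactly as in Proposition~\ref{lemma_dist}, using the one extra observation that \eqnok{SBOE_step} updates only the block $i_t$, so $x_{t+1}-x_t$ is supported on that block and hence $\langle\Delta F_t,x_{t+1}-x_t\rangle = \langle F_{i_t}(x_t)-F_{i_t}(x_{t-1}),x_{t+1}^{(i_t)}-x_t^{(i_t)}\rangle \le \bar L\,\|x_t-x_{t-1}\|\,\|x_{t+1}-x_t\|$ by \eqnok{lipschitz_SBOE}; combining this with $V(x_t,x_{t+1})\ge\tfrac12\|x_t-x_{t+1}\|^2$, Young's inequality, condition \eqnok{cond_16} and telescoping (again with $x_0=x_1$) gives $\tilde Q_k\ge\tfrac{b\theta_k}{4}\|x_k-x_{k+1}\|^2$. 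Then adding the straggler, using $\|\Delta F_{k+1}\|_*\le L\|x_{k+1}-x_k\|$ from \eqnok{Lipschitz}, Young's inequality, and $\|x_{k+1}-x\|^2\le 2V(x_{k+1},x)$ from \eqnok{strong_convex_V} yields $-b\theta_k\gamma_k\langle\Delta F_{k+1},x_{k+1}-x\rangle + \tfrac{b\theta_k}{4}\|x_k-x_{k+1}\|^2 \ge -2b\theta_k\gamma_k^2 L^2 V(x_{k+1},x)$, and substituting this back into the summed inequality produces exactly \eqnok{prop_general_SBOE_result}. The step I expect to be the main obstacle is not any single estimate but the bookkeeping: one must keep straight which terms live only on the randomly selected block $i_t$ (controllable with the blockwise constant $\bar L$ and condition \eqnok{cond_16}) versus which act on all of $X$ (needing the global bound \eqnok{Lipschitz} and the $V$-to-norm inequality \eqnok{strong_convex_V}), while simultaneously tracking the factors $b$ and $b-1$ generated by the substitution $\hat x_{t+1}\leftrightarrow b x_{t+1}-(b-1)x_t$ so that the $\Delta F$-telescoping under \eqnok{cond_lambda_BOE} closes precisely and the boundary terms come out as stated. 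Conceptually there is no new ingredient beyond Proposition~\ref{lemma_dist}; the content is hidden in the definition of $\tilde D_t$, which is reverse-engineered from exactly this computation.
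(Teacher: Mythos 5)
Your proposal is correct and follows essentially the same route as the paper's proof: apply the three-point lemma to the auxiliary full update $\hat x_{t+1}$, split the resulting inner product and prox-terms around $b x_{t+1}-(b-1)x_t$ so the leftovers form $\tilde D_t$, telescope the $\Delta F_t$ terms via \eqref{cond_lambda_BOE}, and bound the remaining quadratic-plus-cross terms with \eqref{lipschitz_SBOE}, \eqref{cond_16}, Young's inequality, and \eqref{strong_convex_V}. The only differences are presentational (the paper packages the ``expected-update'' part as a separately defined $D_t$ rather than deriving $\tilde D_t$ by explicit substitution), so no further comment is needed.
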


\begin{proof}
	We first note  that, by the update of Algorithm \ref{alg:stochastic_block}, $x_{t+1}$ and $x_t$ only differ in the $i_t$'th block. Using the definition of $\hat x_{t+1}$ in \eqnok{x_hat} and the uniform sampling rule \eqnok{cond_b} we obtain the relations
	\begin{align}\label{unbiasness}
	 x_{t+1}^{(i_t)} = \hat x_{t+1}^{(i_t)}, ~\text{and}~~~\bbe_{i_t}[x_{t+1}] =\tsum_{i=1}^b p_{i}[U_iU_i^\top(\hat x_{t+1} - x_t)+   x_{t}] =  \tfrac{1}{b}\hat x_{t+1} + \tfrac{b-1}{b}x_{t}.
	\end{align}
	By employing Lemma~\ref{lemma_projection}, 
	\begin{align*}
	\gamma_t \langle  F(x_t) + \lambda_t \big( F(x_t) - F(\hat x_{t-1}) \big) , x_{t+1} - x \rangle + V(x_t, \hat x_{t+1}) \leq V(x_t, x) - V(\hat x_{t+1}, x).
	\end{align*}
	Rearraging the terms, we have
	\begin{align}\label{three-point}
	D_t + \tilde D_t + bV(x_t,x_{t+1}) + bV(x_{t+1},x) \leq bV(x_{t},x),
	\end{align}
	where $\tilde D_t$ is defined in \eqnok{tilde_D} and
	\begin{align}\label{D_t}
	D_t  &:= \gamma_t \langle F(x_t) + \lambda_t \big( F(x_t)-F(x_{t-1})\big), b x_{t+1}-(b-1) x_t - x\rangle\nn\\
	&~ = \gamma_t b\langle F(x_{t+1}) , x_{t+1}-x\rangle - \gamma_t (b-1) \langle F(x_t),x_t-x\rangle -\gamma_t b \langle F(x_{t+1})-F(x_t),x_{t+1}-x\rangle\nn\\
	&~ \quad+ \gamma_t\lambda_t \langle F(x_t) - F(x_{t-1}),x_t-x\rangle + \gamma_t\lambda_t b \langle  F(x_{t})-F(x_{t-1}),x_{t+1}-x_t\rangle.
	\end{align}
	Combining \eqnok{three-point} and \eqnok{D_t}, multiplying with $\theta_t$ on both sides and summing up from $t=1$ to $k$, we obtain
	\begin{align*}
	\tsum_{t=1}^k \theta_t b \Delta V_t(x) &\geq \tsum_{t=1}^{k-1} (\theta_t\gamma_t b - \theta_{t+1}\gamma_{t+1}(b-1))\langle F(x_{t+1}) , x_{t+1}-x \rangle -\theta_1 \gamma_1 (b-1)\langle F(x_{1}) , x_{1}-x \rangle\\
	&\quad + \theta_k \gamma_k b  \langle F(x_{k+1}) , x_{k+1}-x \rangle + \tsum_{t=1}^{k-1} (\theta_{t+1} \gamma_{t+1} \lambda_{t+1} - \theta_t \gamma_t b)\langle \Delta F_t,x_{t+1}-x\rangle\\
	 & \quad - \theta_k\gamma_kb  \langle \Delta F_{k+1},x_{k+1}-x\rangle+D +  \tsum_{t=1}^k \theta_t\tilde D_t  ,
	\end{align*}
	where $D: = \tsum_{t=1}^k  \theta_t\gamma_t\lambda_t b \langle  \Delta F_t,x_{t+1}-x_t\rangle+   \tsum_{t=1}^k \theta_t bV(x_t,x_{t+1})$, and $\Delta V_t$ and $\Delta F_t$ are defined in \eqref{def_Delta_V}. Invoking condition \eqnok{cond_lambda_BOE}, we have
	\begin{align}\label{eqn:common_bnd_VI_SBOE}
	\tsum_{t=1}^k \theta_t b \Delta V_t(x) &\geq \tsum_{t=1}^{k-1} (\theta_t\gamma_t b - \theta_{t+1}\gamma_{t+1}(b-1))\langle F(x_{t+1}) , x_{t+1}-x \rangle -\theta_1 \gamma_1 (b-1)\langle F(x_{1}) , x_{1}-x \rangle\nn\\
	&\quad + \theta_k \gamma_k b  \langle F(x_{k+1}) , x_{k+1}-x \rangle  - \theta_k\gamma_kb  \langle \Delta F_{k+1},x_{k+1}-x\rangle+D +  \tsum_{t=1}^k \theta_t\tilde D_t.
	\end{align}
	The strong convexity of the individual prox-functions $V_i$ together with the definition of $V$ gives 
	\begin{align}
	V(x,y) \geq \tfrac{1}{2}\|x-y\|^2, \quad \forall x,y \in X. \label{prop_v}
	\end{align}
	Together with  the Lipschitz condition \eqref{lipschitz_SBOE} and $x_1 = x_0$, we can lower bound the term $D$ as follows:
	\begin{align*}
	D
	&\geq \tsum_{t=1}^k \big[\theta_t\gamma_t b\lambda_t  \langle \Delta F_t, x_{t+1}-x_t\rangle  
	+   \tfrac{b \theta_t}{4} \|  x_t - x_{t+1} \|_{i_t}^2 +   \tfrac{b \theta_{t-1} }{4}\|  x_t - x_{t-1} \|_{i_{t-1}}^2 \big]+       \tfrac{ b}{4}\theta_k \|  x_k - x_{k+1} \|_{i_k}^2  \\
	&\geq \tsum_{t=1}^k  \big[-    \theta_t \gamma_t \lambda_t b \bar{L} \|   x_t - x_{t-1} \|_{i_{t-1}} \|   x_{t+1} - x_t\|_{i_t} 	+
	\tfrac{b\theta_t}{4} \|  x_t - x_{t+1} \|_{i_t}^2 +   \tfrac{b\theta_{t-1} }{4}\|  x_t - x_{t-1} \|_{i_{t-1}}^2 \big]    \\
	&\quad    +\tfrac{ b}{4}\theta_k \|  x_k - x_{k+1} \|_{i_k}^2  \ge   \tfrac{ b}{4}\theta_k \|  x_k - x_{k+1} \|_{i_k}^2.
	\end{align*} 
	Here, the first inequality follows from \eqnok{prop_v}. The second inequality follows from the Lipschitz condition \eqref{lipschitz_SBOE}, the Cauchy-Schwarz inequality, and the relationship $\|x_{t+1}-x_t\|=\|x_{t+1}-x_t\|_{i_t}$. The third inequality follows from \eqnok{cond_16}. Using the above bound on $D$ in \eqnok{eqn:common_bnd_VI_SBOE}, we obtain
	\begin{align}
	\tsum_{t=1}^k    \theta_t b\Delta V_t(x)   &\geq   \tsum_{t=1}^{k-1} \big[\theta_t\gamma_tb-\theta_{t+1}\gamma_{t+1}(b-1) \big]\langle F(x_{t+1}) , x_{t+1} - x \rangle + \theta_k\gamma_kb\langle F(x_{k+1}) , x_{k+1} - x \rangle \nn\\
	& \quad -  \theta_1\gamma_1(b-1)\langle F(x_{1}) , x_{1} - x \rangle- \theta_k \gamma_k b\langle \Delta F_{k+1}, x_{k+1} - x \rangle +       \tfrac{ b}{4}\theta_k \|  x_k - x_{k+1} \|_{i_k}^2+  \tsum_{t=1}^k \theta_t \tilde D_t  \nn\\
	& \geq \tsum_{t=1}^{k-1} \big[\theta_t\gamma_tb-\theta_{t+1}\gamma_{t+1}(b-1) \big]\langle F(x_{t+1}) , x_{t+1} - x \rangle + \theta_k\gamma_kb\langle F(x_{k+1}) , x_{k+1} - x \rangle \nn\\
	& \quad - \theta_1\gamma_1(b-1)\langle F(x_{1}) , x_{1} - x \rangle- 2\theta_k \gamma_k^2L^2V(x_{k+1},x)+  \tsum_{t=1}^k \theta_t \tilde D_t ,
	\label{stochatc_bound_general_SBOE}
	\end{align}
	where the second inequality follows from
	\begin{align*}
	- \gamma_kb \langle \Delta F_{k+1} , x_{k+1} - x \rangle  +
	\tfrac{b }{4} \|  x_k - x_{k+1} \|_{i_k} ^2 &\geq
	-  \gamma_kb L \|  x_k - x_{k+1} \|_{i_k}  \|  x - x_{k+1} \| +
	\tfrac{b}{4} \|  x_k - x_{k+1} \|_{i_k}^2\\
	&  \geq  - b\gamma_k^2L^2  \|  x - x_{k+1} \|^2
	\geq - 2\gamma_k^2bL^2V(x_{k+1},x).
	\end{align*}
	The result is obtained  by rearranging the terms in \eqnok{stochatc_bound_general_SBOE}.
\end{proof}

\subsection{Convergence for GSMVIs with block sturcture}
Given the new Bregman distance $V(\cdot, \cdot)$ the generalized strong monotonicity condition in accordance to \eqnok{G_monotone} still holds.
We now describe the main convergence properties for generalized strongly monotone VIs with block structure.

\begin{theorem} \label{the_SBOE_GMVI}
	Suppose that \eqnok{G_monotone} holds for some $\mu > 0$.
	If the parameters in the SBOE method satisfy 
	\eqnok{cond_b}, \eqnok{cond_lambda_BOE}, \eqnok{cond_16}, 
	\begin{align}
	\theta_{t-1}\gamma_{t-1}b &\geq \theta_t\gamma_t(b-1), \label{lambda_SBOE}\\
	\theta_{t}\big(2\mu \tfrac{b-1}{b}\gamma_t+1\big) &\leq \theta_{t-1} \big(2 \mu \gamma_{t-1} + 1\big), t = 1, \ldots, k, \label{gamma_strong_mon_SBOE} \\
	4 L^2 \gamma_k^2 &\leq1, \label{gamma_strong_mon_SBOE_k}
	\end{align}
	then 
	\begin{align*}
	\theta_k (2 \mu \gamma_k + \tfrac{1}{2})  \bbe[V(x_{k+1},x^*)]
	\leq \theta_1 V(x_1,x^*)  + \tfrac{b-1}{b}\theta_1\gamma_1\langle F(x_{1}) , x_{1} - x^* \rangle.
	\end{align*}
\end{theorem}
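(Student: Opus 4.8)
The plan is to begin from the master inequality \eqnok{prop_general_SBOE_result} of Proposition~\ref{prop_SBOE}, which holds pointwise along any realization of the random indices $\{i_t\}$, specialize it to $x = x^*$, and then take the total expectation of both sides. The crucial first step is to argue that the error term vanishes in expectation, i.e. $\bbe[\tilde D_t] = 0$ for each $t$. I would establish this by conditioning on the $\sigma$-algebra $\mathcal{F}_{t-1} := \sigma(i_1, \ldots, i_{t-1})$: with respect to $\mathcal{F}_{t-1}$ the vectors $x_t$, $x_{t-1}$, $F(x_t)$, $F(x_{t-1})$ and the ``full'' update $\hat x_{t+1}$ of \eqnok{x_hat} are all deterministic. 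Using the two identities in \eqnok{unbiasness}, namely $x_{t+1}^{(i_t)} = \hat x_{t+1}^{(i_t)}$ and $\bbe_{i_t}[x_{t+1}] = \tfrac1b \hat x_{t+1} + \tfrac{b-1}{b} x_t$, together with the fact that $x_{t+1}$ differs from $x_t$ only in block $i_t$ (where it agrees with $\hat x_{t+1}$), one checks term by term that
\begin{align*}
\bbe_{i_t}[\,b x_{t+1} - (b-1)x_t \mid \mathcal{F}_{t-1}\,] &= \hat x_{t+1}, \qquad \bbe_{i_t}[\,b V(x_t, x_{t+1}) \mid \mathcal{F}_{t-1}\,] = V(x_t, \hat x_{t+1}),\\
\bbe_{i_t}[\,b V(x_{t+1}, x^*) - (b-1)V(x_t, x^*) \mid \mathcal{F}_{t-1}\,] &= V(\hat x_{t+1}, x^*).
\end{align*}
Substituting these into the definition \eqnok{tilde_D} of $\tilde D_t$ shows that each of its three bracketed pieces has zero conditional expectation, and the tower property then gives $\bbe[\tilde D_t] = 0$.

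With $\bbe[\tilde D_t]=0$, I would pass to the expected inequality and eliminate the residual inner products. Condition \eqnok{lambda_SBOE} (applied with the index shifted by one) makes every coefficient $\theta_t\gamma_t b - \theta_{t+1}\gamma_{t+1}(b-1)$ nonnegative, and the coefficient $\theta_k\gamma_k b$ of the last inner product is obviously nonnegative; hence I may lower-bound each occurrence of $\langle F(x_{t+1}), x_{t+1}-x^*\rangle$ by $2\mu V(x_{t+1},x^*)$ via the generalized strong monotonicity \eqnok{G_monotone}. Writing $v_t := \bbe[V(x_t, x^*)]$ (so $v_1 = V(x_1,x^*)$ and all $v_t \ge 0$) and re-indexing the two sums, the resulting inequality, after moving all terms to one side, takes the form
\begin{align*}
b\,\theta_k\big(2\mu\gamma_k + 1 - 2\gamma_k^2 L^2\big) v_{k+1} + \tsum_{t=2}^{k} c_t\, v_t \;\le\; b\,\theta_1 v_1 + \theta_1\gamma_1(b-1)\langle F(x_1), x_1-x^*\rangle,
\end{align*}
where a short computation gives $c_t = b\big[\theta_{t-1}(2\mu\gamma_{t-1}+1) - \theta_t(2\mu\tfrac{b-1}{b}\gamma_t+1)\big]$, which is nonnegative precisely by condition \eqnok{gamma_strong_mon_SBOE}.

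Since $c_t v_t \ge 0$, I can drop the middle sum and divide through by $b$ to obtain $\theta_k(2\mu\gamma_k + 1 - 2\gamma_k^2 L^2)\bbe[V(x_{k+1},x^*)] \le \theta_1 V(x_1,x^*) + \tfrac{b-1}{b}\theta_1\gamma_1\langle F(x_1), x_1-x^*\rangle$. Finally, condition \eqnok{gamma_strong_mon_SBOE_k} gives $2\gamma_k^2 L^2 \le \tfrac12$, hence $2\mu\gamma_k + 1 - 2\gamma_k^2 L^2 \ge 2\mu\gamma_k + \tfrac12$, which yields exactly the claimed bound. I expect the main obstacle to be the first step — verifying $\bbe[\tilde D_t]=0$ — since that is where the block-coordinate randomization genuinely enters: one must track carefully which quantities are $\mathcal{F}_{t-1}$-measurable and exploit both that the sampled block is uniform and that $x_{t+1}$ coincides with $\hat x_{t+1}$ on exactly that block. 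Once this is settled, the remainder is the same telescoping-and-absorbing bookkeeping used in the proofs of Theorems~\ref{the_linear_convergence} and~\ref{the_stoch_GMVI}.
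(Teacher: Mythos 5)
Your proposal is correct and follows essentially the same route as the paper: specialize Proposition~\ref{prop_SBOE} to $x=x^*$, show $\bbe[\tilde D_t]=0$ by conditioning on the history and using \eqnok{unbiasness}, then apply \eqnok{G_monotone} with the nonnegativity of the coefficients guaranteed by \eqnok{lambda_SBOE}, telescope via \eqnok{gamma_strong_mon_SBOE}, and absorb the $2L^2\gamma_k^2$ term using \eqnok{gamma_strong_mon_SBOE_k}. Your term-by-term verification of the conditional expectations in $\tilde D_t$ is more explicit than the paper's one-line appeal to the law of iterated expectations, but it is the same argument.
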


\begin{proof}
	Let us fix $x = x^*$ and take expectation on both sides of \eqnok{prop_general_SBOE_result} w.r.t. $ i_1, \hdots, i_k$. Note that $x_t$ is a deterministic function of $ i_1, \hdots, i_{t-1}$. By conditioning on $ i_1, \hdots, i_{t-1}$ and using the law of iterated expectations and \eqnok{unbiasness}, it follows that 
	$
	\mathbb{E}[ \tilde D_t ] = 0.
	$ 
	As such
	\begin{align*}
	\tsum_{t=1}^{k-1} \big[\theta_t\gamma_tb-\theta_{t+1}\gamma_{t+1}(b-1) \big]\bbe[\langle F(x_{t+1}) , x_{t+1} - x^* \rangle]+ \theta_k\gamma_kb\bbe[\langle F(x_{k+1}) , x_{k+1} - x^* \rangle]\nn\\
	+\tsum_{t=1}^{k}\theta_tb\bbe[V(x_{t+1},x^*)]- 2\theta_k\gamma_k^2bL^2\bbe[V(x_{k+1},x^*)]
	\leq \tsum_{t=1}^k    \theta_tb \bbe[V(x_t,x^*)]+\theta_1\gamma_1(b-1)\langle F(x_{1}),x_1-x^*\rangle.\label{bnd_SBOE_temp}
	\end{align*}
	Using the above inequality,  \eqnok{G_monotone}, and \eqnok{lambda_SBOE}, we obtain
	\begin{align*}
	\tsum_{t=1}^{k-1} 2\mu\big[\theta_t\gamma_tb-\theta_{t+1}\gamma_{t+1}(b-1) \big]\bbe[V(x_{t+1},x^*)]+ 2\mu b\theta_k\gamma_k\bbe[V(x_{k+1},x^*)]+\tsum_{t=1}^{k}\theta_tb\bbe[V(x_{t+1},x^*)]\nn\\
	- 2\theta_k\gamma_k^2bL^2\bbe[V(x_{k+1},x^*)]
	\leq \tsum_{t=1}^k    \theta_t b\bbe[V(x_t,x^*)]+\theta_1\gamma_1(b-1)\langle F(x_{1}),x_1-x^*\rangle.
	\end{align*}
	Invoking \eqnok{gamma_strong_mon_SBOE}, we have
	\begin{align*}
	\theta_k (2 \mu \gamma_k + 1)  \bbe[V(x_{k+1},x^*)] - 2\theta_k\gamma_k^2L^2\bbe[V(x_{k+1},x^*)]
	\leq \theta_1 V(x_1,x^*)  + \tfrac{b-1}{b}\theta_1\gamma_1\langle F(x_{1}) , x_{1} - x^* \rangle,
	\end{align*}
	which, in view of \eqnok{gamma_strong_mon_SBOE_k},  implies the result.
\end{proof}

We now specify the selection of a particular
stepsize policy for solving GSMVI problems with block structure. We show that by using a constant stepsize $\gamma_t=\gamma_0$, one can achieve a linear convergence rate.

\begin{corollary}\label{corrollary_SBOE}
	If
	$$
	\gamma_t =\gamma=\tfrac{1}{2\bar{L}b}, ~~~ \lambda_t=\tfrac{b+2(b-1)\mu\gamma}{1+2\mu\gamma},~~\mbox{and}~~ \theta_t  = \big(\tfrac{1+2\mu\gamma}{1+2\mu\gamma(b-1)/b}\big)^k,
	$$
	then
	$
	\bbe[V(x_{k+1},x^*)] 
	\leq 2\big(\tfrac{1+2\mu\frac{b-1}{b}\gamma}{1+2\mu\gamma}\big)^k\big[V(x_1,x^*)+\tfrac{b-1}{b}\gamma\langle F(x_{1}),x_1-x^*\rangle\big].
	$
\end{corollary}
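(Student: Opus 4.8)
The plan is to invoke Theorem~\ref{the_SBOE_GMVI} directly and then clean up the leading constant. Set $\rho := \tfrac{1+2\mu\gamma}{1+2\mu\gamma(b-1)/b}$; then $\rho>1$ (since $(b-1)/b<1$), and the prescribed policy can be rewritten as $\gamma_t=\gamma$, $\lambda_t=b/\rho$, $\theta_t=\rho^{t}$, where the identity $b/\rho=\tfrac{b+2(b-1)\mu\gamma}{1+2\mu\gamma}$ follows from a single line of algebra. I would first verify that this policy satisfies all the hypotheses of Theorem~\ref{the_SBOE_GMVI}, namely \eqnok{cond_b}, \eqnok{cond_lambda_BOE}, \eqnok{cond_16}, \eqnok{lambda_SBOE}, \eqnok{gamma_strong_mon_SBOE}, \eqnok{gamma_strong_mon_SBOE_k}, and then substitute into its conclusion.

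For the verification: \eqnok{cond_b} is the uniform-sampling assumption. Condition \eqnok{cond_lambda_BOE} reads $\theta_{t+1}\gamma\lambda_{t+1}=\rho^{t+1}\gamma(b/\rho)=\rho^{t}\gamma b=\theta_t\gamma b$, which holds. Condition \eqnok{gamma_strong_mon_SBOE}, after dividing by $\theta_{t-1}$, becomes $\rho\,(2\mu\tfrac{b-1}{b}\gamma+1)\le 2\mu\gamma+1$, i.e. $\rho\le\tfrac{1+2\mu\gamma}{1+2\mu\gamma(b-1)/b}$, which is an equality by the definition of $\rho$. For \eqnok{cond_16}, the choice $\gamma=\tfrac{1}{2\bar{L}b}$ gives $4\bar{L}^2\gamma^2\lambda_t^2=\lambda_t^2/b^2=1/\rho^2$, so \eqnok{cond_16} reads $\theta_{t-1}\ge\theta_t/\rho^2$, i.e. $\rho\ge 1$. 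With constant $\gamma$, condition \eqnok{lambda_SBOE} becomes $b\ge\rho(b-1)$, which after clearing the denominator of $\rho$ is just $b\ge b-1$. Finally, since $\|F(x_1)-F(x_2)\|_*^2=\sum_{i}\|F_i(x_1)-F_i(x_2)\|_{i,*}^2\le b\bar{L}^2\|x_1-x_2\|^2$, the global Lipschitz modulus obeys $L\le\sqrt{b}\,\bar{L}$, whence $4L^2\gamma^2\le 4b\bar{L}^2\gamma^2=1/b\le 1$, which is \eqnok{gamma_strong_mon_SBOE_k}.

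With the hypotheses in force, Theorem~\ref{the_SBOE_GMVI} (using $\gamma_1=\gamma_k=\gamma$) gives
\[
\bbe[V(x_{k+1},x^*)]\le\frac{\theta_1}{\theta_k\,(2\mu\gamma+\tfrac12)}\Big[V(x_1,x^*)+\tfrac{b-1}{b}\gamma\,\langle F(x_1),x_1-x^*\rangle\Big].
\]
Since $\theta_1/\theta_k=\rho^{1-k}$, the prefactor equals $\rho^{-k}\cdot\tfrac{\rho}{2\mu\gamma+1/2}$, so it remains to check $\tfrac{\rho}{2\mu\gamma+1/2}\le 2$, equivalently $1+2\mu\gamma\le(1+4\mu\gamma)\big(1+2\mu\gamma(b-1)/b\big)$; upon expansion the right-hand side already dominates $1+4\mu\gamma\ge 1+2\mu\gamma$. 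Hence the prefactor is at most $2\rho^{-k}=2\big(\tfrac{1+2\mu\frac{b-1}{b}\gamma}{1+2\mu\gamma}\big)^{k}$, which is precisely the asserted bound.

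The argument is essentially bookkeeping, and there is no serious obstacle; the points that require a little care are that \eqnok{cond_lambda_BOE} and \eqnok{gamma_strong_mon_SBOE} hold with equality, so the constant $\rho$ must be exactly right for one fixed policy to meet all six conditions at once, and that the stepsize $\gamma$ is calibrated to the blockwise modulus $\bar{L}$ rather than the global $L$, which is why \eqnok{gamma_strong_mon_SBOE_k} needs the elementary bound $L\le\sqrt{b}\,\bar{L}$.
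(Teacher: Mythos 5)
Your proof is correct and follows essentially the same route as the paper's: verify that the prescribed policy satisfies all six hypotheses of Theorem~\ref{the_SBOE_GMVI} and substitute into its conclusion. You are in fact slightly more complete than the published argument, since you explicitly justify $4L^2\gamma_k^2\le 1$ via $L\le\sqrt{b}\,\bar L$ and verify that the leading factor $\rho/(2\mu\gamma+\tfrac12)$ is at most $2$, both of which the paper leaves implicit; you also correctly read the exponent in $\theta_t$ as $t$ rather than the typographical $k$.
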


\begin{proof}
	Note that \eqnok{cond_lambda_BOE} holds by the definition of $\lambda_t$.
	Observe that
	\begin{align*}
	4 L^2 \gamma_t^2 \leq  \tfrac{L^2}{\bar{L}b^2} \le1, ~~~~4 \bar{L}^2 \gamma_t^2 \lambda_t^2 \theta_t = 4\bar{L}^2b^2 \gamma_t^2 \tfrac{\theta_{t-1}^2 \gamma_{t-1}^2}{\theta_t^2 ~  \gamma_t^2} \theta_t
	= 4\bar{L}^2b^2  \tfrac{\theta_{t-1}^2 \gamma_{t-1}^2}{\theta_t} \le 4\bar{L}^2b^2  \gamma_{t-1}^2 \theta_{t-1} \le \theta_{t-1},
	\end{align*}
	and thus both  \eqnok{cond_16} and \eqnok{gamma_strong_mon_SBOE_k} hold.
	In order to check \eqnok{gamma_strong_mon_SBOE}, we observe that
	$
	\theta_{t-1} (2 \mu \gamma_{t-1} + 1) = \tfrac{\theta_t \lambda_t}{b}(2 \mu \gamma_{t-1} + 1)= \theta_{t}\big(2\mu \tfrac{b-1}{b}\gamma_t+1\big).
	$
	From the definition of $\gamma_t$ and $\theta_t$, \eqnok{lambda_SBOE} is satisfied.
	The result then follows from Theorem~\ref{the_SBOE_GMVI}.
\end{proof}

For the specific choice of algorithmic parameters 
$\{ \gamma_t\}, \{\lambda_t\}$, and $\{\theta_t\}$ employed in Corollary~\ref{corrollary_SBOE}, the total number of iterations required by the SBOE algorithm to 
find an $\epsilon$-solution can be bounded by
$
\mathop{O}\{b^2\bar{L}/\mu\log{(1/\epsilon)\big\}}.
$
This result shows that we can obtain linear rate of convergence for solving VI problems even though the algorithm is stochastic. Comparing with the deterministic OE algorithm, the iteration cost of SBOE is cheaper up to a factor of $\mathop{O}\{b\}$. Note that  the Lipschitz constant is $\bar{L}=\mathop{O}\{L/\sqrt{b}\}$.
 In terms of the dependence on $b$, the SBOE algorithm is $\mathop{O}\{\sqrt{b}\}$ worse, while it is not evident to us that this dependence can be improved.

\subsection{Convergence for MVIs with block structure}
In this subsection we show that  the SBOE method 
can achieve a $\mathop{O}(\tfrac{1}{\epsilon})$ convergence rate for solving standard monotone VIs. 
In the following we assume the norm $\|\cdot\|$ is Euclidean and $\o(x)=\tfrac{\|x\|^2}{2}, \forall x \in X$. \textcolor{black}{Furthermore we assume that the set $X$ is bounded.}
The proof of this result relies on the construction of a novel auxiliary sequence $\{ z_t\}$ that is related but different from
those used in the VI literature~\cite{NJLS09-1,AlacaogluFercoqCevher2020}.

\begin{theorem}\label{MVI_SBOE}
	Suppose that \eqnok{monotone} holds. Let $\{x_t\}$ be generated by Algorithm \ref{alg:stochastic_block} and
	denote
	\beq \label{def_xbar1}
	\bar{x}_{k+1} := \tfrac{\sum_{t=1}^{k-1} [\theta_t\gamma_t b -\theta_{t+1}\gamma_{t+1} (b-1)]x_{t+1} +\theta_k\gamma_k b x_{k+1}}{\sum_{t=1}^{k-1} [\theta_t\gamma_t b -\theta_{t+1}\gamma_{t+1} (b-1)] +\theta_k\gamma_k b }.
	\eeq
	If \eqref{cond_b}, \eqref{cond_lambda_BOE}, \eqref{lambda_SBOE}, \eqref{gamma_strong_mon_SBOE_k} hold, and $\theta_{t-1} \geq 16 \bar{L}^2 \gamma_t^2 \lambda_t^2 \theta_t$ ,$\theta_t \leq \theta_{t-1}$, then
	\beq \label{result_1_SBOEMVI}
	\bbe[\gap(\bar{x}_{k+1})]
	\leq  \tfrac{\theta_1}{\sum_{t=1}^{k-1} [\theta_t\gamma_t b -\theta_{t+1}\gamma_{t+1} (b-1)] +\theta_k\gamma_k b} \max_{x \in X} \big[5(b+1)V(x_1,x) +\gamma_1 (b-1)\langle F(x_1),x_1-x \rangle \big].
	\eeq
	In particular, if $\gamma_t = \tfrac{1}{4\bar L b}$, $\lambda_t = b$ and $\theta_t =  1$,
	then
	\beq \label{result_2_SBOEMVI}
	\bbe[\gap(\bar{x}_{k+1}) ]
	\leq  \tfrac{4\bar L b}{k-1+b} \max_{x \in X} \big[5(b+1)V(x_1,x) +\tfrac{b-1}{4\bar L b}\langle F(x_1),x_1-x \rangle \big].
	\eeq
\end{theorem}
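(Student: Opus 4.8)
The plan is to run the argument of the monotone deterministic case (Theorem~\ref{the_monotone}) with Proposition~\ref{prop_SBOE} in place of Proposition~\ref{lemma_dist}, and to absorb the randomness introduced by block sampling through the new auxiliary sequence $\{z_t\}$ advertised before the statement. Throughout set $\mathcal F_t:=\sigma(i_1,\dots,i_t)$; note from \eqnok{x_hat} that $\hat x_{t+1}$ and $d_t:=\hat x_{t+1}-x_t$ are $\mathcal F_{t-1}$-measurable while $x_{t+1}$ is $\mathcal F_t$-measurable.

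First I would re-derive Proposition~\ref{prop_SBOE} keeping extra slack: under the strengthened hypothesis $\theta_{t-1}\ge16\bar L^2\gamma_t^2\lambda_t^2\theta_t$ (in place of \eqnok{cond_16}, just as Theorem~\ref{lemma_dist_no_strong_monotone} sharpens Proposition~\ref{lemma_dist}), the estimate of $D$ in that proof improves to $D\ge\tfrac b4\tsum_{t=1}^{k-1}\theta_t\|x_{t+1}-x_t\|^2+\tfrac b4\theta_k\|x_k-x_{k+1}\|_{i_k}^2$. Since the weights $\theta_t\gamma_tb-\theta_{t+1}\gamma_{t+1}(b-1)$ are nonnegative by \eqnok{lambda_SBOE}, monotonicity \eqnok{monotone} together with the definition \eqnok{def_xbar1} of $\bar x_{k+1}$ turns the operator terms in \eqnok{prop_general_SBOE_result} into $W_k\langle F(x),\bar x_{k+1}-x\rangle$, where $W_k$ is the normalizing constant in \eqnok{def_xbar1}; telescoping the $V(x_t,x)$ and $V(x_{t+1},x)$ terms with $\theta_t\le\theta_{t-1}$ and discarding the remaining nonnegative $V(x_{k+1},x)$ terms using \eqnok{gamma_strong_mon_SBOE_k}, I reach, for every fixed $x\in X$,
\[
W_k\langle F(x),\bar x_{k+1}-x\rangle+\tfrac b4\tsum_{t=1}^{k-1}\theta_t\|x_{t+1}-x_t\|^2+\tsum_{t=1}^k\theta_t\tilde D_t\le\theta_1 bV(x_1,x)+\theta_1\gamma_1(b-1)\langle F(x_1),x_1-x\rangle .
\]

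The heart of the proof, and the step I expect to be hardest, is controlling $\tsum_t\theta_t\tilde D_t$. In the Euclidean setting ($V(a,b)=\tfrac12\|a-b\|^2$), expanding \eqnok{tilde_D} and using \eqnok{unbiasness}, namely $\bbe_{i_t}[x_{t+1}]=\tfrac1b\hat x_{t+1}+\tfrac{b-1}bx_t$ and $\bbe_{i_t}[V(x_t,x_{t+1})]=\tfrac1bV(x_t,\hat x_{t+1})$, shows that $\tsum_t\theta_t\tilde D_t$ is a sum of $\mathcal F_t$-martingale differences \emph{except} for the single term $-\tsum_t\theta_t\langle\zeta_t,x\rangle$, where $\zeta_t:=\hat x_{t+1}-bx_{t+1}+(b-1)x_t$ satisfies $\bbe[\zeta_t\mid\mathcal F_{t-1}]=0$. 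Since $\gap$ requires a maximum over $x\in X$, I cannot set $x=x^*$ here; instead I introduce $z_1:=x_1$ and $z_{t+1}:=\argmin_{u\in X}\{\langle-\theta_t\zeta_t,u\rangle+\alpha V(z_t,u)\}$ with a constant $\alpha$ of order $b$ to be chosen. The three-point lemma for this recursion together with Young's inequality gives, for every $x\in X$, $\tsum_t\theta_t\langle\zeta_t,x\rangle\le\tsum_t\theta_t\langle\zeta_t,z_t\rangle+\alpha V(z_1,x)+\tsum_t\tfrac{\theta_t^2}{2\alpha}\|\zeta_t\|^2$. The reason $\alpha$ must be taken large — unlike the analogous dual-averaging sequence of Lemma~\ref{auxiliary_sequence} in the standard stochastic case — is that the block-sampling noise is not dimension-free: $\bbe_{i_t}\|\zeta_t\|^2=(b-1)\|d_t\|^2=b(b-1)\,\bbe_{i_t}\|x_{t+1}-x_t\|^2$.

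I then plug this bound into the displayed inequality, take $\max_{x\in X}$ (the terms not depending on $x$ factor out of the maximum), and take expectations. The martingale pieces and $\tsum_t\theta_t\langle\zeta_t,z_t\rangle$ have zero mean because $z_t$ and $\hat x_{t+1}$ are $\mathcal F_{t-1}$-measurable, and $\bbe\big[\tsum_t\tfrac{\theta_t^2}{2\alpha}\|\zeta_t\|^2-\tfrac b4\tsum_t\theta_t\|x_{t+1}-x_t\|^2\big]\le0$ once $\alpha\ge2(b-1)$. Since $z_1=x_1$, this yields $W_k\,\bbe[\gap(\bar x_{k+1})]\le\max_{x\in X}[(\theta_1 b+\alpha)V(x_1,x)+\theta_1\gamma_1(b-1)\langle F(x_1),x_1-x\rangle]$, which is \eqnok{result_1_SBOEMVI} after bounding the constant by $5(b+1)$. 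For the explicit choice $\gamma_t=\tfrac1{4\bar Lb}$, $\lambda_t=b$, $\theta_t=1$, one verifies \eqnok{cond_b}, \eqnok{cond_lambda_BOE}, \eqnok{lambda_SBOE} and \eqnok{gamma_strong_mon_SBOE_k} (for the last using $L^2\le b\bar L^2$), that $\theta_{t-1}=16\bar L^2\gamma_t^2\lambda_t^2\theta_t$, and $\theta_t\le\theta_{t-1}$; and $W_k=(k-1)\tfrac1{4\bar Lb}+\tfrac1{4\bar L}=\tfrac{k-1+b}{4\bar Lb}$, which gives \eqnok{result_2_SBOEMVI}.
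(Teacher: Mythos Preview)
Your plan is correct and follows essentially the same approach as the paper's proof. Both arguments isolate the only $x$-dependent part of $\tilde D_t$ that survives the conditional expectation, namely $\langle\hat\delta_t,x\rangle=-\langle\zeta_t,x\rangle$ in the Euclidean setting, and both handle it by introducing the auxiliary sequence $\{z_t\}$ with $z_1=x_1$ and a prox-weight of order $b$; the remaining pieces of $\tilde D_t$ are then mean-zero martingale differences that vanish after taking $\max_x$ and then expectation, exactly as you describe. The only bookkeeping differences are: (i) the paper puts the constant $4(b+1)$ in the $z$-recursion rather than $\theta_t$, obtaining $\theta_t\|\hat\delta_t\|^2/(8(b+1))$ instead of your $\theta_t^2\|\zeta_t\|^2/(2\alpha)$; (ii) the paper bounds $\|\hat\delta_t\|^2$ by Young's inequality and cancels the leftover against a zero-mean correction $\tfrac{b\|x_t-x_{t+1}\|^2-\|x_t-\hat x_{t+1}\|^2}{4(b+1)}$, whereas you compute $\bbe_{i_t}\|\zeta_t\|^2=(b-1)\|d_t\|^2$ exactly and absorb it using the extra $\tfrac b4\sum_t\theta_t\|x_{t+1}-x_t\|^2$ you correctly extracted from $D$ under the strengthened condition $\theta_{t-1}\ge16\bar L^2\gamma_t^2\lambda_t^2\theta_t$. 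Your route is arguably a bit cleaner. One trivial slip: the requirement should be $\alpha\ge2(b-1)\theta_1$ rather than $\alpha\ge2(b-1)$ in the general-$\theta_t$ case, but after factoring out $\theta_1$ this still gives $\theta_1 b+\alpha=\theta_1(3b-2)\le5(b+1)\theta_1$, so \eqnok{result_1_SBOEMVI} follows unchanged.
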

\begin{proof}
Note that \eqnok{eqn:common_bnd_VI_SBOE} still holds, and therefore
\begin{align}\label{equ1}
\tsum_{t=1}^k \theta_t b \Delta V_t(x) \geq S +  \tsum_{t=1}^k \theta_t (A_t + B_t) + C,
\end{align}
where
\begin{align*}
	S &:= \tsum_{t=1}^{k-1} (\theta_t\gamma_t b - \theta_{t+1}\gamma_{t+1}(b-1))\langle F(x_{t+1}) , x_{t+1}-x \rangle -\theta_1 \gamma_1 (b-1)\langle F(x_{1}) , x_{1}-x \rangle \\
	&\quad ~ + \theta_k \gamma_k b  \langle F(x_{k+1}) , x_{k+1}-x \rangle  - \theta_k\gamma_kb  \langle \Delta F_{k+1},x_{k+1}-x\rangle\\
	A_t &:= \gamma_t \langle F(x_t) + \lambda_t \big( F(x_t)-F(x_{t-1})\big), \hat x_{t+1} - [b x_{t+1}-(b-1) x_t] \rangle + [V(x_t, \hat x_{t+1}) - b V(x_t,x_{t+1}) \\
	B_t &:= V(\hat x_{t+1},x) - b V(x_{t+1},x) + (b-1) V(x_{t},x), \\
	C & := \tsum_{t=1}^k  \theta_t\gamma_t\lambda_t b \langle  \Delta F_t,x_{t+1}-x_t\rangle+   \tsum_{t=1}^k \theta_t bV(x_t,x_{t+1}).
\end{align*}
By the definition of Bregman distance, we have
$
B_t  = B_t^{(1)} + B_t^{(2)} + \langle \hat \delta_t, x \rangle,
$
where 
\begin{align*}
B_t^{(1)}&= - \omega(\hat x_{t+1})+ b \omega(x_{t+1})-(b-1)\o(x_t),\\
B_t^{(2)}&= \langle \nabla \o(\hat x_{t+1}), \hat x_{t+1}\rangle - b \langle\nabla\o(x_{t+1}),x_{t+1} \rangle + (b-1) \langle \nabla\o(x_{t}),x_{t}  \rangle, \\
\hat \delta_t &= b\nabla \o(x_{t+1})-(b-1)\nabla \o(x_t)-\nabla \o(\hat x_{t+1}).
\end{align*}
We now introduce an auxiliary sequence $\{z_t\}$, which satisfies $z_{1}=x_1$ and for $t = 1,2, \hdots, k, $
 $$z_{t+1} = \arg \min_{x \in X} \langle \hat\delta_t, x \rangle + 4(b+1)
V({z}_t,x) \textcolor{black}{.}$$ 
Utilizing Lemma \ref{auxiliary_sequence} and $\theta_t \leq \theta_{t-1}$, we obtain
\begin{align*}
 \tsum_{t=1}^k  \theta_t \langle \hat \delta_t , z_t - x \rangle \leq 4\theta_1(b+1) V(z_{1},x)+
\tsum_{t=1}^k  \tfrac{\theta_t\| \hat \delta_t \|^2_*}{8(b+1)} \leq 4\theta_1(b+1) V(z_{1},x)+
\tsum_{t=1}^k  \tfrac{\theta_t(\| x_t - \hat x_{t+1} \|^2+b^2\| x_t - x_{t+1} \|^2)}{4(b+1)} .
\end{align*}
The second inequality follows from $\o(\cdot)=\tfrac{\|\cdot\|^2}{2}$ and Young's inequality. Combining  the relation above with \eqnok{equ1} gives us
\begin{align}\label{equ2}
\tsum_{t=1}^k \theta_t b \Delta V_t(x) &\geq S+  \tsum_{t=1}^k \theta_t (A_t + B_t^{(1)}+B_t^{(2)}+\langle \hat \delta_t , z_t  \rangle)  + C- 4\theta_1(b+1) V(z_{1},x)\nn\\
&\quad -\tsum_{t=1}^k \tfrac{\theta_t(\| x_t - \hat x_{t+1} \|^2+b^2\| x_t - x_{t+1} \|^2)}{4(b+1)}.
\end{align}
Meanwhile, we have 
\begin{align*}
C-&\tsum_{t=1}^k \tfrac{\theta_t(\| x_t - \hat x_{t+1} \|^2+b^2\| x_t - x_{t+1} \|^2)}{4(b+1)}\\&\geq \tsum_{t=1}^k  \theta_t\gamma_t\lambda_t b \langle \Delta F_t,x_{t+1}-x_t\rangle+   \tsum_{t=1}^k  \tfrac{\theta_t b\|x_t-x_{t+1}\|^2}{4} +   \tsum_{t=1}^k  \big[\tfrac{\theta_t b\|x_t-x_{t+1}\|^2}{4} - \tfrac{\theta_t(\| x_t - \hat x_{t+1} \|^2+b^2\| x_t - x_{t+1} \|^2)}{4(b+1)}\big]\\
&\geq \tsum_{t=1}^k  \big[-    \theta_t \gamma_t \lambda_t b \bar{L} \|   x_t - x_{t-1} \| \|   x_{t+1} - x_t\|	+
\tfrac{b\theta_t}{8} \|  x_t - x_{t+1} \|^2 +   \tfrac{b\theta_{t-1} }{8}\|  x_t - x_{t-1} \|^2 \big]    +\tfrac{ b}{8}\theta_k \|  x_k - x_{k+1} \|^2 \\
&\quad   +   \tsum_{t=1}^k \tfrac{\theta_t(b\| x_t - x_{t+1} \|^2-\| x_t - \hat x_{t+1} \|^2)}{4(b+1)}  \\
&\ge   \tfrac{ b}{8}\theta_k \|  x_k - x_{k+1} \|^2 +   \tsum_{t=1}^k \tfrac{\theta_t(b\| x_t - x_{t+1} \|^2-\| x_t - \hat x_{t+1} \|^2)}{4(b+1)} .
\end{align*} 
Here the first inequality follows from \eqref{prop_v}, the second inequality follows from \eqref{lipschitz_SBOE}, and the third inequality follows from the condition $\theta_{t-1} \geq 16 \bar{L}^2 \gamma_t^2 \lambda_t^2 \theta_t$. 
Using the above bound in \eqnok{equ1}, the definition of $S$ and Young's inequality, we obtain
\begin{align*}
&\tsum_{t=1}^k    \theta_t b\Delta V_t(x)   \nn\\
&\geq S + \tfrac{b}{8} \theta_k \|x_k - x_{k+1}\|^2 +\tsum_{t=1}^k\theta_t \left(A_t + B_t^{(1)}+B_t^{(2)}+\langle \hat \delta_t , z_t  \rangle+\tfrac{b\| x_t - x_{t+1} \|^2-\| x_t -\hat x_{t+1} \|^2}{4(b+1)}\right) - 4\theta_1(b+1) V(x_{1},x)\nn\\
& \geq \tsum_{t=1}^{k-1} \big(\theta_t\gamma_tb-\theta_{t+1}\gamma_{t+1}(b-1) \big)\langle F(x_{t+1}) , x_{t+1} - x \rangle - \theta_1\gamma_1(b-1)\langle F(x_{1}) , x_{1} - x \rangle + \theta_k\gamma_kb\langle F(x_{k+1}) , x_{k+1} - x \rangle \nn\\
& \quad - 2\theta_k \gamma_k^2L^2\|x_{k+1}-x\|^2 +\tsum_{t=1}^k\theta_t \left(A_t + B_t^{(1)}+B_t^{(2)}+\langle \hat \delta_t , z_t  \rangle+\tfrac{b\| x_t - x_{t+1} \|^2-\| x_t - \hat x_{t+1} \|^2}{4(b+1)}\right)- 4\theta_1(b+1) V(x_{1},x).
\end{align*}
After taking expectation on both sides of the inequality, it follows by \eqnok{unbiasness} that $\bbe[A_t + B_t^{(1)}+B_t^{(2)}+\theta_t \langle \hat \delta_t , z_t  \rangle+\theta_t\tfrac{b\| x_t - x_{t+1} \|^2-\| x_t - \hat x_{t+1} \|^2}{4(b+1)}]=0$. Together with the condition $\theta_t \leq \theta_{t-1}$ and \eqnok{gamma_strong_mon_SBOE_k} we obtain
\begin{align*}
\tsum_{t=1}^{k-1} \big[\theta_t\gamma_tb-\theta_{t+1}\gamma_{t+1}(b-1) \big]\bbe[\langle F(x_{t+1}) , x_{t+1} - x \rangle] + \theta_k\gamma_kb\bbe[\langle F(x_{k+1}) , x_{k+1} - x \rangle] \\
\leq \theta_1 b V(x_1,x) + 4\theta_1(b+1) V(x_{1},x) + \theta_1\gamma_1(b-1)\langle F(x_{1}) , x_{1} - x \rangle.
\end{align*}
Furthermore, similar as in the proof of Theorem \ref{the_monotone}, using the monotonicity property in \eqnok{monotone} and 
the definition of $\bar x_{k+1}$, we get \eqnok{result_1_SBOEMVI}.
\end{proof} 

In view of Theorem~\ref{MVI_SBOE},
the SBOE method can achieve a ${\cal O}(b^2 \bar L/\epsilon)$ complexity rate for MVI problems. 
Similarly to the GSMVI case,
 the SBOE algorithm is $\mathop{O}\{\sqrt{b}\}$ worse than the OE algorithm in terms of the dependence on number of blocks.

\section{Numerical experiments} \label{sec-num}   
In this section we report some preliminary numerical results for the operator extrapolation (OE) algorithm as well as its stochastic (SOE) and stochastic block (SBOE) variants.
\subsection{Traffic assignment problem} \label{transportation}   
We consider a classic problem in operations research, the general traffic assignment problem, where the travel cost on each link in the transportation network may depend on the flow on this link as well as other links in the network. 
Algorithmic design for the computation of  traffic equilibrium patterns based on the theory   of variational inequalities originated in 
the work of \cite{Dafermos80}.
The traffic network is abstracted as a directed graph, consisting of a set of nodes $ \mathcal{N}$, a set of directed arcs $ \mathcal{A}$, $| \mathcal{A} | = n$,
together with a set of ordered node pairs
$\mathcal{W}$, $| \mathcal{W} | = N$, where an element $ w = (o,d) \in \mathcal{W}$ is referred to as an origin-destination (OD) pair. The travel demand $d_w \geq 0 $ associated to the OD pair $w \in \mathcal{W}$ is to be distributed among the paths of the network that connect $w$, the latter set is denoted by $P_w$. The set of feasible path flow vectors is $X \subset \mathbb{R}^N$,
$
X = \{ x \in \mathbb{R}^N~|~\forall w \in \mathcal{W}, \forall p \in P_w, \tsum_{p \in P_w} x_p = d_w,  ~ x_p \geq 0   \}.
$

Let $A$ denote the arc-chain incidence matrix of the network.
A path flow pattern $x \in X$ induces an arc flow pattern $y \in \mathbb{R}^n$ and the set of feasible arc flows is 
$
Y = \{ y \in \mathbb{R}^n ~|~   y = A x, x \in X  \}.
$
We introduce the map $F : \mathbb{R}^n \rightarrow \mathbb{R}^n $, where $F(y)$ denotes the vector of costs associated to a particular arc flow pattern $y$. 
The traffic assignment problem admits a VI formulation:
Given a transportation network and a demand vector $d \in \mathbb{R}^{|\mathcal{W}|}$, 
\beq
\label{VIP_TA}
\text{find}~ y^* \in Y: ~~~~ \langle F(y^*) , y - y^* \rangle \geq 0, ~~~ \forall y \in Y.
\eeq
The equilibrium condition \eqref{VIP_TA} is referred in the literature as the user-optimization principle.
We consider an affine parameterization for the operator $F$, with 
$
F(y) = G y + b,~ G \in \mathbb{R}^{n \times n}_+, ~\mbox{and}~ b \in \mathbb{R}^n_+. 
$
The matrix $G$ is not necessarily symmetric. We test our algorithms on randomly generated data sets, where  $A= I$, $ |\mathcal{W} |= 5$, $ G + G^{\T} \succ 0$,
and $b=5$, while the number of arcs  is progressively increased.  \textcolor{black}{Given that the operator $F$ is affine,  one can directly compute
$ L =  \sigma_{\max}(G)$, $\mu = \tfrac{1}{2} \lambda_{\min}( G + G^{\T})$.}
The algorithm proposed in \cite{NesterovScimali06} abbreviated as (NS) is also considered for the purposes of illustration.  

\begin{figure}[H]
	\centering
	\begin{minipage}[t]{0.4\linewidth}
		\centering
		\includegraphics[width=6cm]{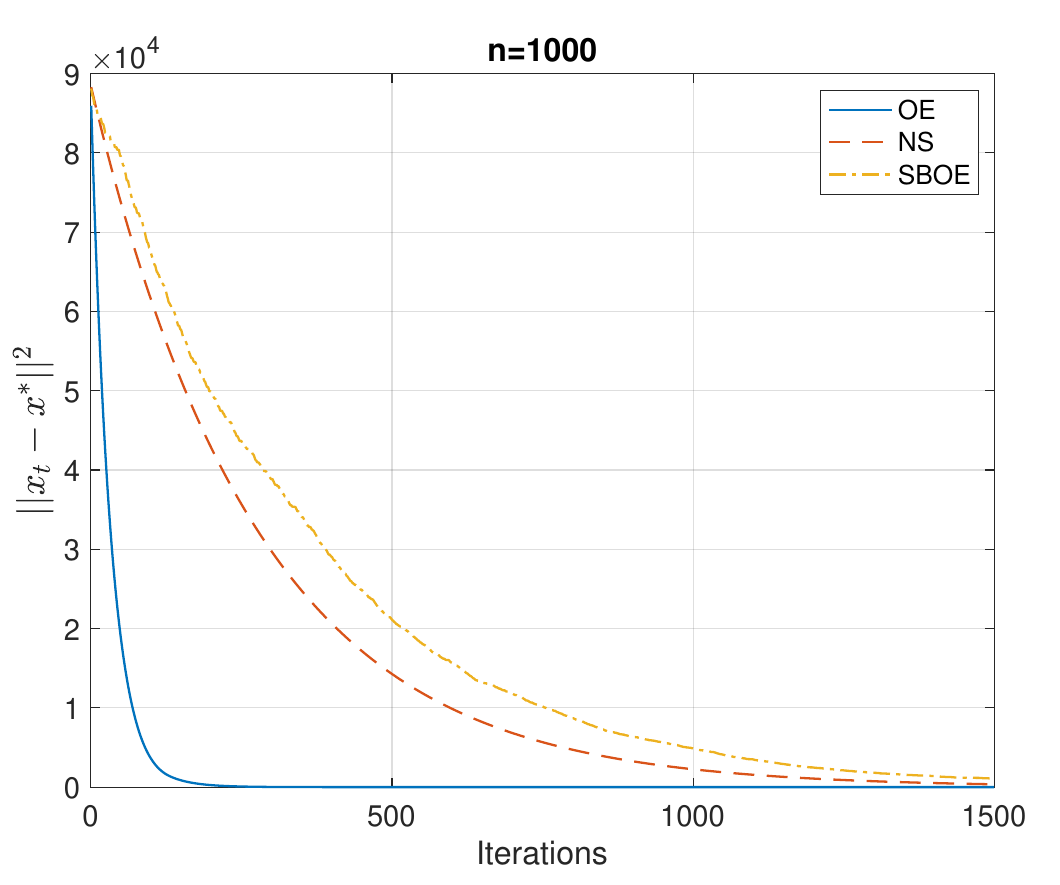}
	\end{minipage}
	\begin{minipage}[t]{0.4\linewidth}
		\centering
		\includegraphics[width=6cm]{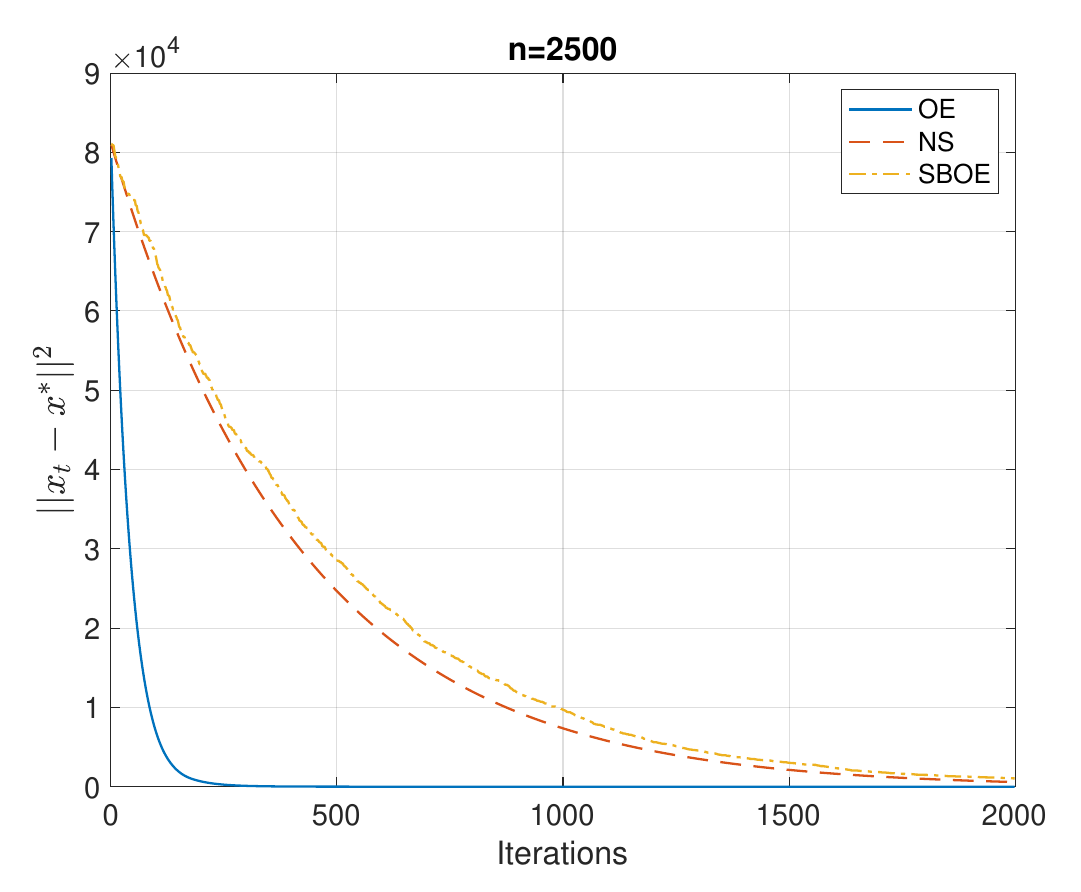}
	\end{minipage}
	\begin{minipage}[t]{0.4\linewidth}
		\centering
		\includegraphics[width=6cm]{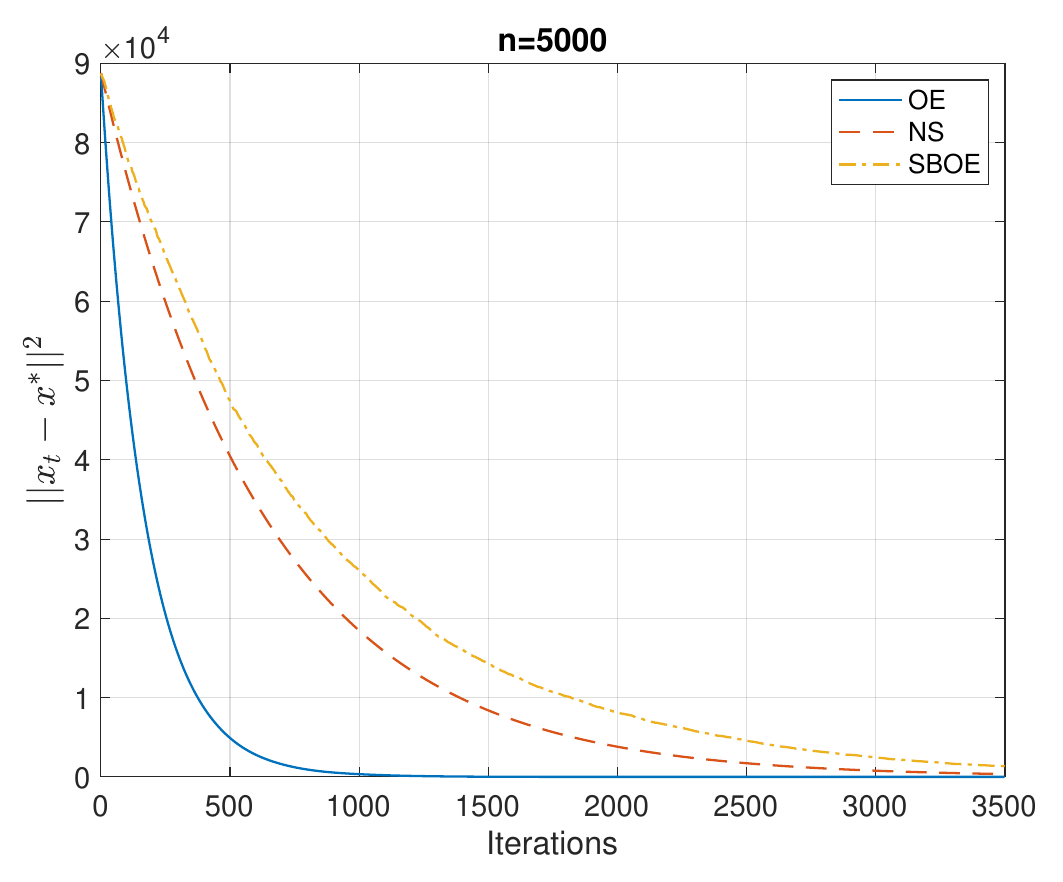}
	\end{minipage}
	\begin{minipage}[t]{0.4\linewidth}
		\centering
		\includegraphics[width=6cm]{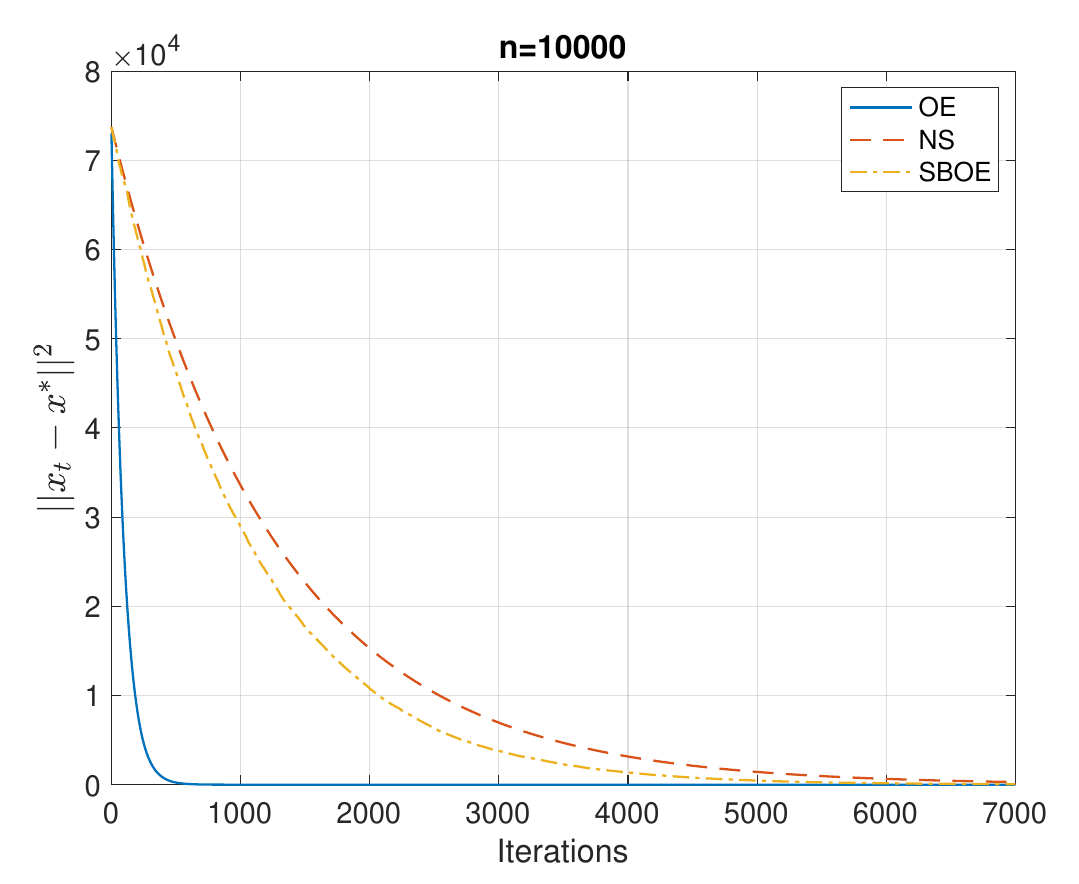}
	\end{minipage}
	\caption{Traffic assignment problem for networks with \textcolor{black}{1000,  2500, 5000, 10000} arcs. Comparison of error trajectory $ \|y_t - y^*\|^2 = \|x_t - x^*\|^2 $  for OE, NS and SBOE algorithm, number of blocks is $b=5$.}
	\label{fig_compare}
\end{figure}.  

\begin{table}[H]  
	\centering
	\begin{tabular}{|c|c|c|c|c|c|c|}
		\hline
		$n$& $L$&$\mu$& $ \frac{L}{\mu}$&OE&NS&SBOE\\
		\hline
		$1000$& $72.02$& $0.134$& $ 537.70$& $0.765$&$1.517$&$0.407$\\
	\hline
$2500$& $112.03$&$0.133$& $ 841.99$ &$0.934$ & $1.789$ &$0.680$\\
		\hline
$5000$& $162.14$&$0.129$& $ 1259.40$ &$1.619$ & $2.658$ &$0.755$\\
		\hline
$10000$& $237.18$&$0.094$& $ 2532.40$ &$2.701$ & $4.687$ &$1.086$\\
		\hline
	\end{tabular}
	\caption{Comparison of average CPU time per iteration in [sec], last three columns}\label{table1}
\end{table}
\textcolor{black}{
In all four experiments the (OE) algorithm seems to exhibit faster convergence to the equilibirum point in terms of the iteration count.
The (NS) algorithm requires the maintenance of two sequences of iterates which is reflected in approximately double the average computation time per iteration in comparison to the (OE) algorithm.
As expected the (SBOE) algorithm is the least computationally intensive given that it has the advantage of solving a lower dimensional optimization problem at each iteration, while it can also benefit from a recursive update of the operator utilizing the underlying block structure, see also \cite{LanBook2020}.}

\subsection{Signal estimation and generalized linear models} \label{arkadi examples} 

We consider a nonlinear signal estimation problem involving generalized linear models (GLMs) that is amenable to a variational inequality formulation
(see Chapter 5.2 of  \cite{ArkadiBook2020}). 
An i.i.d. sequence of regressor-label observations 
$$
\xi^K = \{  \xi_k = (\eta_k, y_k), 1 \leq k \leq K  \}
$$
is generated according to a distribution 
$P_{x^*}$, where $x^* \in \mathcal{X} \subset 
\mathbb{R}^n$ is an unknown signal lying in a convex compact set. 
The regressor vector $ \eta $ has distribution $Q$ independent of $x^*$, while the conditional distribution of the label $y$ given $\eta$ is controlled by $x^*$ and
$
\mathbb{E}[y | \eta] = f(\eta^{\T} A x^*),
$
where  $f: \mathbb{R} \rightarrow \mathbb{R}$ is referred to in the statistics literature as the link function and $A \in \mathbb{R}^{n \times n}$ is a full rank, known matrix, that satisfies the condition
$
A + A^{\T} \succ 0.
$
The goal is to infer $x^* \in \mathcal{X}$ from the i.i.d. realizations $\xi^K$ generated by the unknown distribution $P_{x^*}$. This inference problem admits the following VI formulation.  
Let the operator 
$
F : \mathcal{X} \rightarrow \mathcal{X}
$
be defined as 
$$
F(x) = \mathbb{E}[\eta f(\eta^{\T} A x) ]-
\mathbb{E}[\eta f(\eta^{\T} A x^*)].
$$ 
Then $x^*$ is a zero of $F$ and under mild regularity assumptions
is given as the solution of the VI problem of finding $x^* \in X$
s.t.  $\langle F(x^*) , x - x^* \rangle \geq 0$ for all $x \in X$.
An unbiased estimator of $F(x)$ is given by
$
\tilde{F}(x) = \eta f(\eta^{\T} A x )- \eta y. 
$
We will apply the four versions of the (SOE) algorithm with the stepsize selection as in 
Corollary \ref{step_size_stoch_strong_mon_k_unknown} (SOE-1),  
Corollary \ref{step_size_stoch_strong_mon_k_known} (SOE-2),
Corollary \ref{restart_stepsize} (SOE-3)
and Theorem \ref{lemma_dist_no_strong_monotone_SOE} (SOE-4)
and compare to the classic stochastic approximation (SA) algorithm. 
We consider the case where $ \mathcal{X} = \{ x \in \mathbb{R}^n ~|~ \| x \| \leq R\}$, is a ball of radius $R$, while the distribution $Q$ of the regressor $\eta $ is $ \mathcal{N}(0, I_n)$.
There are two options for the link function:
\begin{table}[H]  
	\centering
	\begin{tabular}{|c|c|c|c|c|c|}
		\hline
		A& hinge function & $y \in \mathbb{R}$ & $f(s) = \max\{s,0\}$ \\
		\hline	
		B& ramp sigmoid & $y \in \mathbb{R}$ & $f(s) = \min\{1, \max\{s,0\} \}$ \\
		\hline	
	\end{tabular}
	\caption{link functions}\label{table2}
\end{table}
In both cases the conditional distribution of the label $y$ given the regressor $\eta$ is   Gaussian $ \mathcal{N}(f(\eta^\T A x^*), \sigma_y)$, while
the operator $F$ is Lipschitz continuous and strongly monotone. The following observation 
facilitates the analytic calculation of bounds to the Lipschitz constant as well as the modulus of strong monotonicity in each case.
Fix $ x \in \mathbb{R}^n - \{0\}$,  and set $ z = A x$. Consider the operator 
$
G(x) = \mathbb{E}[ \eta f( \eta^{\T} A x)]$,  and note that $ F(x) = G(x) - G(x^*).$
Set 
$u_1 = \tfrac{z}{\|z \|}$ and extend this vector  to an orthonormal basis $ \{ u_1, \hdots, u_n\}$ of $\mathbb{R}^n$. The Gaussian vector $ \eta $   is written as $ \eta = \tsum_{1}^{n} \langle u_i , \eta \rangle u_i $. One observes that 
$ \langle \eta, z \rangle $ and $ \eta_{\bot} = \tsum_{2}^{n} \langle u_i , \eta \rangle u_i $ are independent, since for any $j \in \{2, n\}$ the normal r.v.s 
$ 
\langle \eta, z \rangle,  \langle \eta, u_j \rangle
$
are uncorrelated.  This realization allows one to  write
$$
G(x) =  \mathbb{E}[ \eta f( \eta^{\T} A x)] = \mathbb{E}[\tfrac{z z^{\T}}{\|z\|^2} \eta f( \eta^{\T} z)]
=  \tfrac{z }{\|z\|} \mathbb{E}[\zeta  f( \zeta \|z\|)],
$$
where $ \zeta = \eta^{\T}  \tfrac{z }{\|z\|} $ is a standard normal.

\subsubsection{Hinge function}
In this case $ G_A(x) = \tfrac{1}{2} A x$ and as  such $ L_A = \tfrac{1}{2} \sigma_{\max}(A)$, $\mu_A = \tfrac{1}{4} \lambda_{\min}( A + A^{\T})$.  It is worth noting that in this example, while the link function is non-linear the corresponding operator is linear proportional to the one of the linear regression case, where the link function is $ f(s) = s$.
We test the algorithms on some randomly
generated data sets with dimension $n=100$ and radius $R=100$. The vector $x^*$ is created by sampling each of
its entries uniformly in $[0,1]$ and subsequently normalizing such that $ \| x^* \| = R$. The matrix $A$ is of the form $ A = \diag{(d)} + d_{-} \times 10^{-2} \hat{A}$ where the entries of 
$  \hat{A}$ are sampled uniformly in $[0,1]$ and the vector $d \in \mathbb{R}^{100}$ has entries
chosen equidistantly between $ d_{-} $ and $ d_{+} = 1 $. The parameter $d_{-} $ is progressively decreased in order to achieve lower $ \mu$ and therefore higher condition numbers.  

 The theoretical analysis provides us with conservative stepsize policies which will ensure the convergence for all the algorithms above. In terms of the actual implementation we fine-tuned each stepsize policy, by changing the value of the  Lipschitz constant $L$.
For fairness purposes we maintain the same $L$  across all  algorithms and stepsize policies.  We fine-tuned the value of $L$ based on the first 50 iterations and set it to 
$L = 0.5$.
Our choice of $L$  improves the convergence speed while ensuring that the error of the algorithm decreases steadily. The motivation behind our approach lies in the fact that 
the bound of \eqref{split_8} only requires a local Lipschitz constant for each time step $t$, which can be  smaller than the global one. 

\begin{figure}[H]
	\centering
	\begin{minipage}[t]{0.3\linewidth}
		\centering
		\includegraphics[width=5cm]{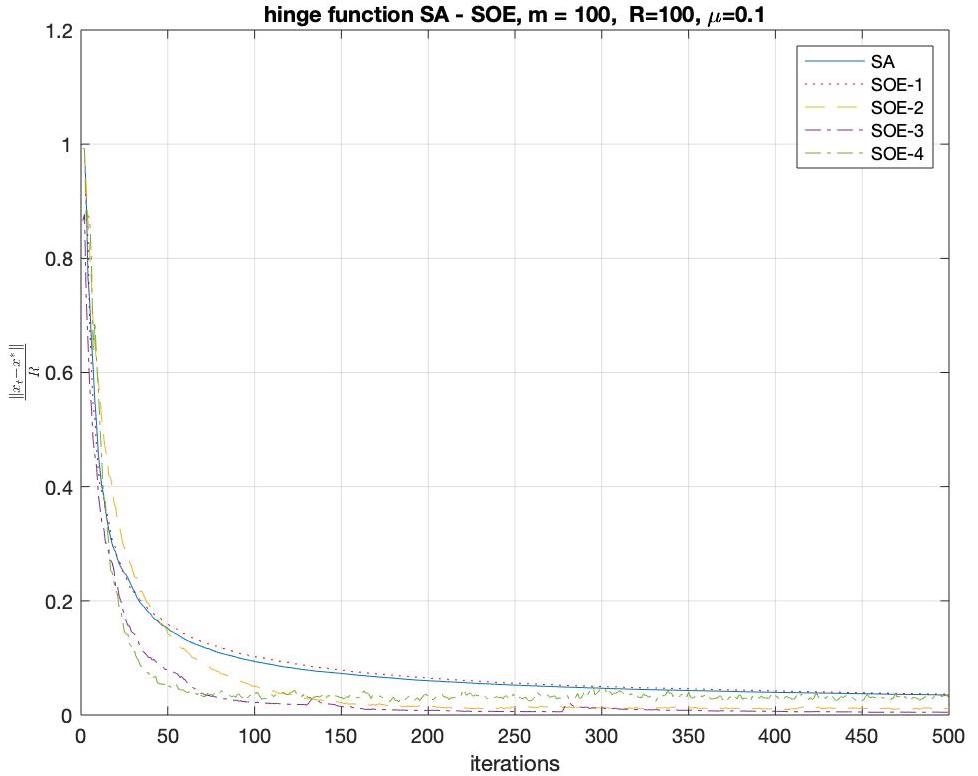}
	\end{minipage}
	\begin{minipage}[t]{0.3\linewidth}
		\centering
		\includegraphics[width=5cm]{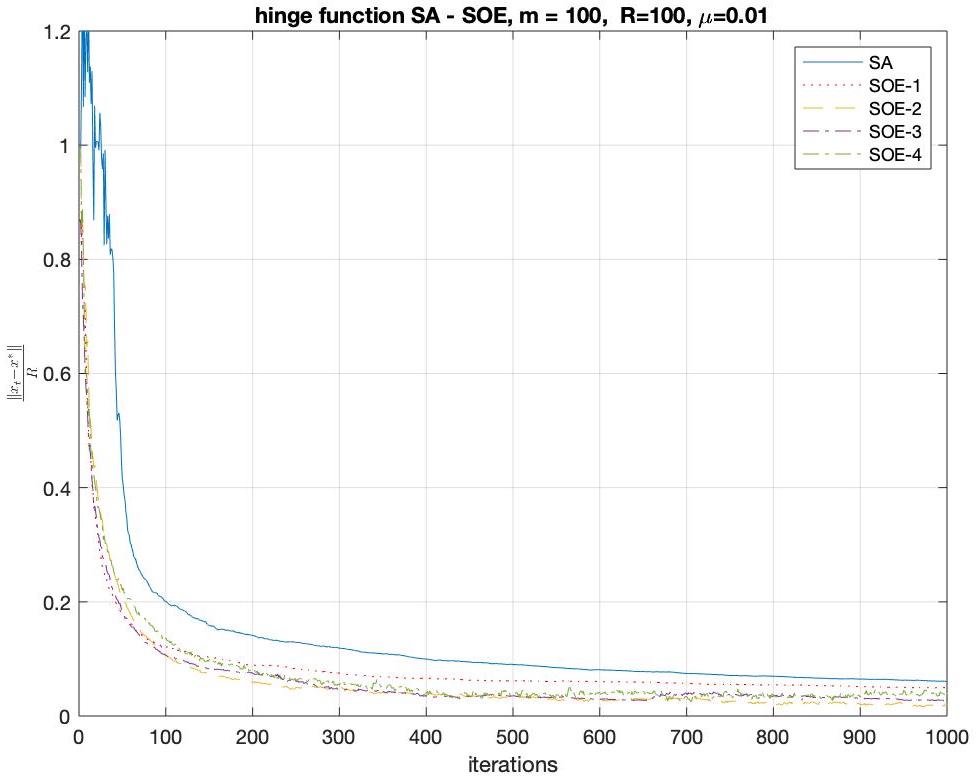}
	\end{minipage}
	\begin{minipage}[t]{0.3\linewidth}
		\centering
		\includegraphics[width=5cm]{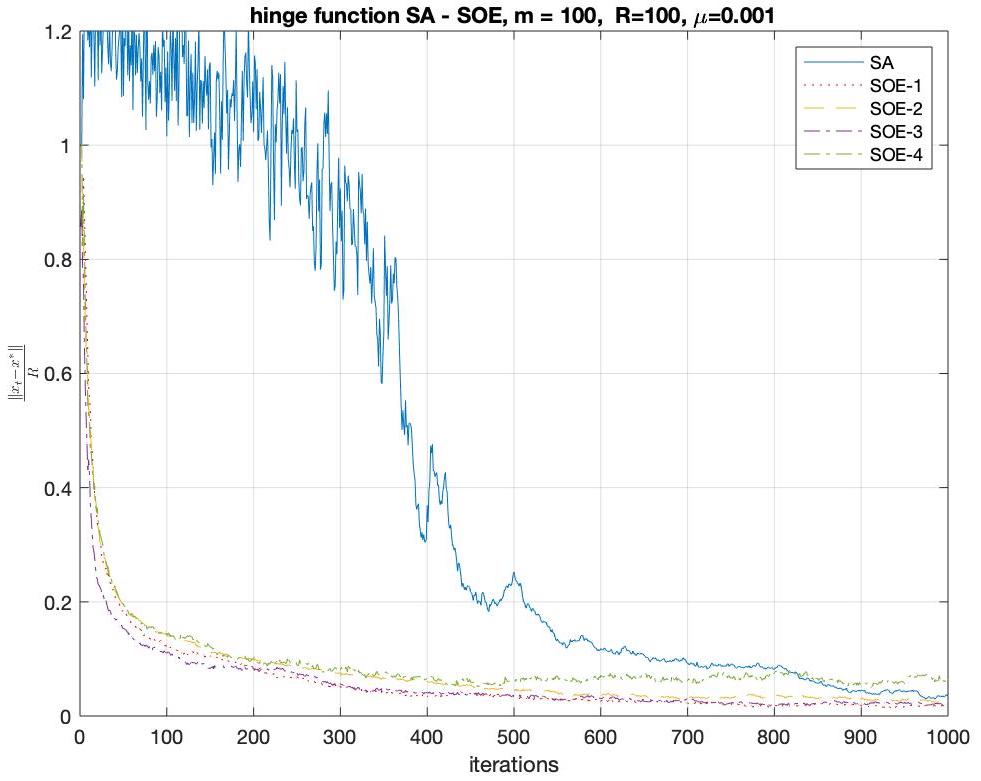}
	\end{minipage}
	\caption{Comparison between SA and SOE-1, SOE-2, SOE-3 and SOE-4 algorithms for the hinge function example. From left to right $d_{-}$ is set to $10^{-1}$,  $10^{-2}$, $ 10^{-3}$ respectively. For each experiment the value of the operator at each iteration was averaged with $m=100$ samples and we set $ \sigma_y = 1$. }
	\label{fig_hinge_function}
\end{figure}
As we can see in figure \ref{fig_hinge_function} above,
the SOE method performs better than SA, with the contrast becoming more pronounced as $\mu$ decreases.
In the next set of experiments (see figure \ref{fig_hinge_function_restart}),
we wanted to manifest the significance of the index resetting scheme. For this purpose we lowered the standard deviation $\sigma_y$ associated to the observation label $y$ and increased the batch size to $ m = 1000$. This step reduces the operator noise and therefore diminishes the size of each epoch, so that we are able to observe the effect of this scheme within the horizon under consideration.
\begin{figure}[H]
	\centering
	\begin{minipage}[t]{0.3\linewidth}
		\centering
		\includegraphics[width=5cm]{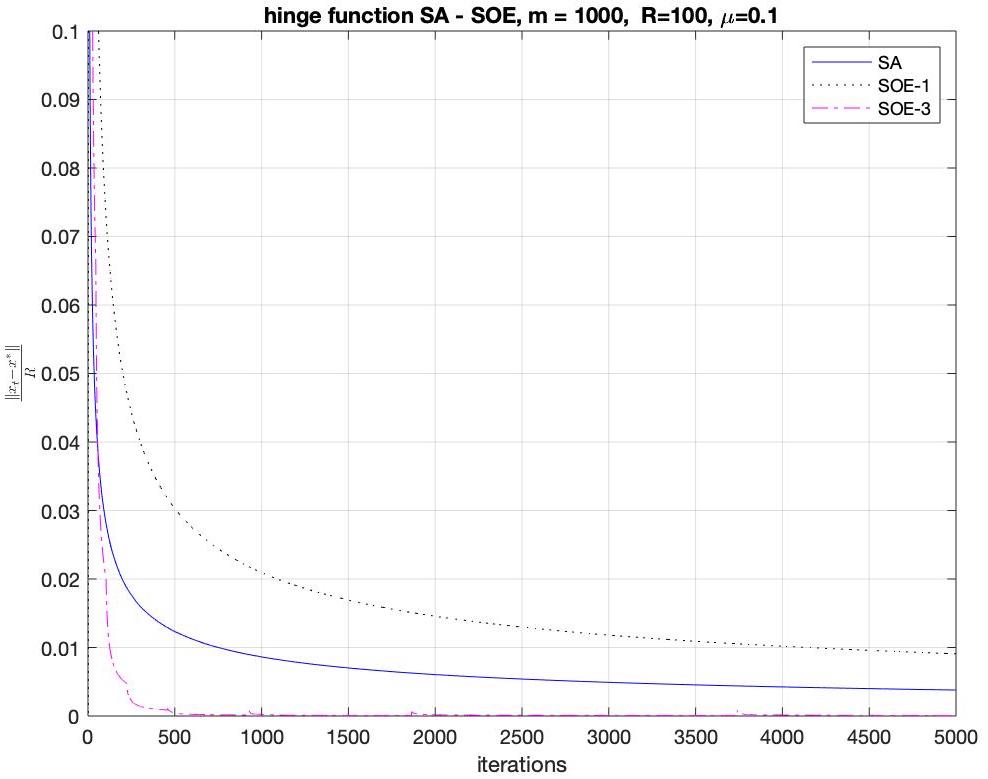}
	\end{minipage}
	\begin{minipage}[t]{0.3\linewidth}
		\centering
		\includegraphics[width=5cm]{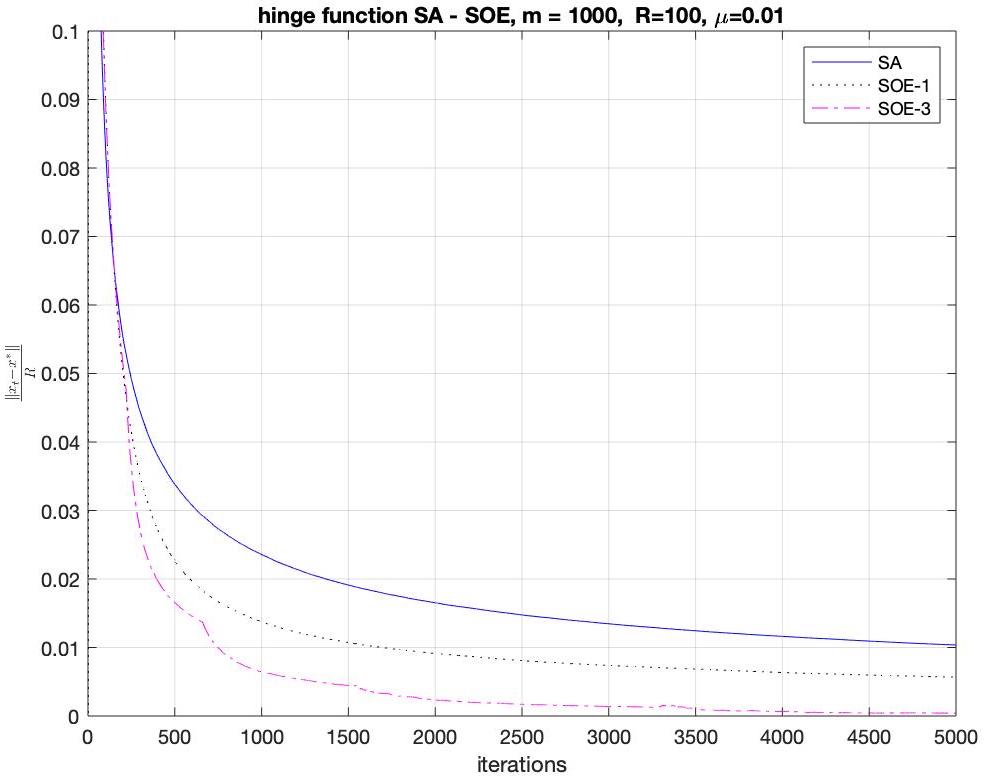}
	\end{minipage}
	\begin{minipage}[t]{0.3\linewidth}
		\centering
		\includegraphics[width=5cm]{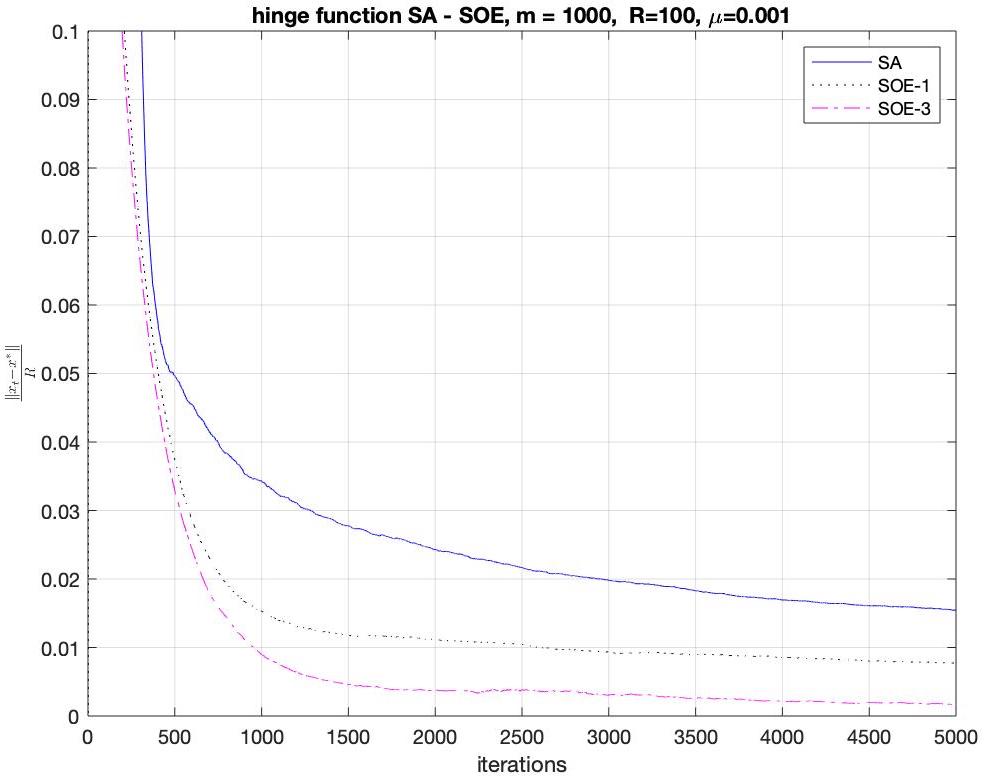}
	\end{minipage}
	\caption{Comparison between SA and SOE-1 and SOE-3 algorithms for the hinge function example. From left to right $d_{-}$ is set to $10^{-1}$,  $10^{-2}$, $10^{-3}$, respectively. For each experiment we use a batch-size $m=1000$ samples and we set  $ \sigma_y = 0.1$. }
	\label{fig_hinge_function_restart}
\end{figure}

\subsubsection{Ramp sigmoid}
In this case the computation of the operator $G_C(x)$ is more involved and we demonstrate it when $A = I_n$.
\begin{eqnarray*}
	G_C(x) & = &  \tfrac{x}{\|x\|} \mathbb{E}[ \zeta   \min\{1, \max\{\zeta \|x\|,0\} \}   ]  
	=   \tfrac{1}{\sqrt{2 \pi}}\tfrac{x}{\|x\|} \bigg( \int_{0}^{\tfrac{1}{\|x\|}} \zeta^2 \|x\|  
	\exp(-\tfrac{\zeta^2}{2}) d\zeta +  \int_{\tfrac{1}{\|x\|}}^{\infty} \zeta   
	\exp(-\tfrac{\zeta^2}{2}) d\zeta
	\bigg) \\
	& = &  \tfrac{1}{\sqrt{2 \pi}}\tfrac{x}{\|x\|}  \bigg( \sqrt{\tfrac{\pi}{2}} ~ \|x \|~   \erf(\tfrac{1}{\sqrt{2} \|  x \|})  \bigg) = \tfrac{1}{2} ~ x ~ \erf(\tfrac{1}{\sqrt{2} \|  x \|}). 
\end{eqnarray*}
\textcolor{black}{The above calculation involves partial integration and employing the definition of the error function,  namely  $ \erf (x)=  \tfrac{2}{\sqrt{\pi}} \int_{0}^x \exp(-t^2) dt$.}
The Jacobian $ \nabla G_C(x) $ is given by 
$
\nabla G_C(x) = \tfrac{1}{2} \erf(\tfrac{1}{\sqrt{2} \|  x \|}) I_n - \tfrac{1}{\sqrt{2 \pi}} \tfrac{\exp(-\tfrac{1}{2 \|x\|^2})}{\|x\|^3}  x x^{\T}.   
$
As such 
$$
L_c = \max_{x \in X} \big\{ \tfrac{1}{2} \erf(\tfrac{1}{\sqrt{2} \|  x \|})\} = \tfrac{1}{2}, ~~~~ 
\mu_C = \min_{x \in X} \big\{ \tfrac{1}{2} \erf(\tfrac{1}{\sqrt{2} \|  x \|})  - \tfrac{1}{\sqrt{2 \pi}} \tfrac{\exp(-\tfrac{1}{2 \|x\|^2})}{\|x\|} \} = 
\tfrac{1}{2} \erf(\tfrac{1}{\sqrt{2} R })  - \tfrac{1}{\sqrt{2 \pi}} \tfrac{\exp(-\tfrac{1}{2 R^2})}{R}.
$$
The modulus of strong monotonicity  is a strictly decreasing function of the radius of the domain and the latter variable is going to be progressively increased in order to achieve more challenging cases from the condition number point of view. As before we choose $n = 100$ and
the vector $x^*$ is created by sampling 
its entries i.i.d. in $[0,1]$ and subsequently normalizing such that $ \| x^* \| = R$.
\begin{figure}[H]
	\centering
	\begin{minipage}[t]{0.3\linewidth}
		\centering
		\includegraphics[width=5cm]{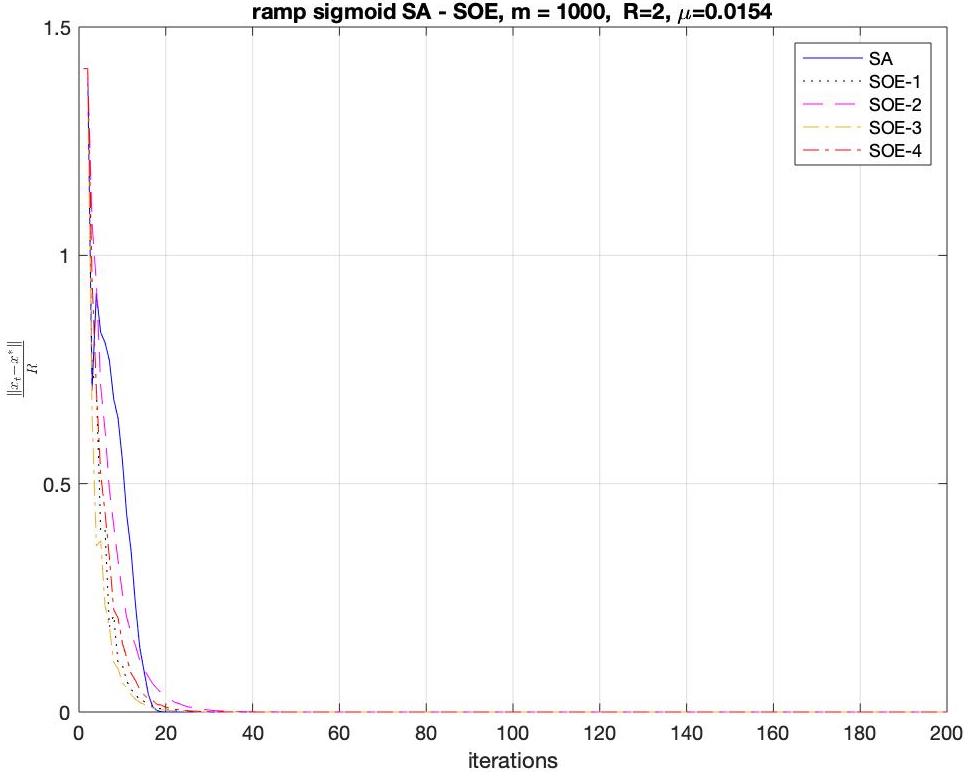}
	\end{minipage}
	\begin{minipage}[t]{0.3\linewidth}
		\centering
		\includegraphics[width=5cm]{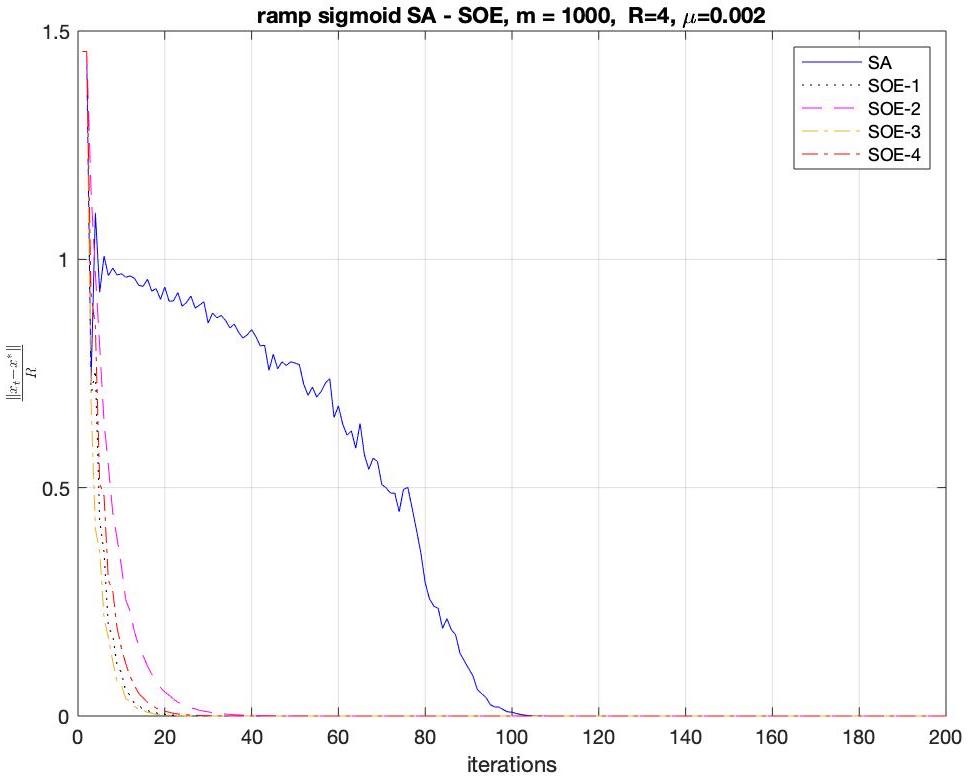}
	\end{minipage}
	\begin{minipage}[t]{0.3\linewidth}
		\centering
		\includegraphics[width=5cm]{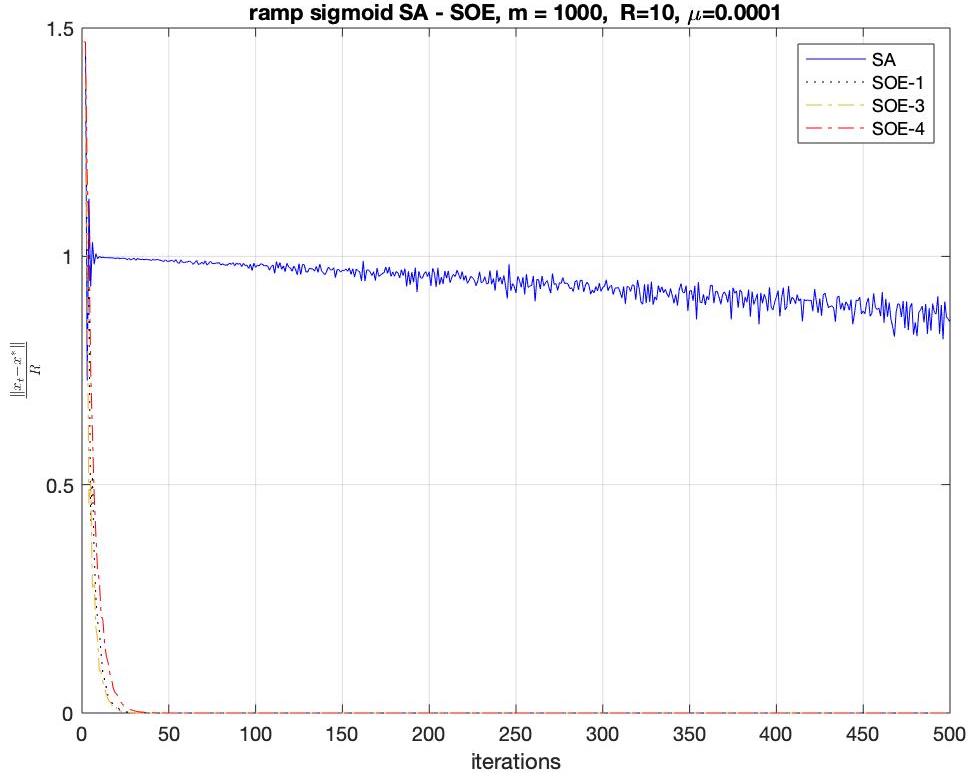}
	\end{minipage}
	\caption{Comparison between SA and SOE-1, SOE-2, SOE-3 and SOE-4 algorithms for the ramp-sigmoid example. From left to right $R$ is set to $2$,  $4$, $10$ respectively. For each experiment we use a batch-size $m=1000$.}
	\label{fig_ramp_sigmoid}
\end{figure}
This is a more challenging example, given the nonlinearity of the operator $F$.  
The results exhibit similar trend as in the hinge function case, in the sense of SOE outperforming SA as the condition number increases,  exasperated by the non-linearity of the operator. 


\section{Concluding remarks} \label{sec_conclusion}
This paper presents a new class of (stochastic) first-order methods obtained by incorporating operator extrapolation into
the gradient (operator) projection methods. We show that the OE method can achieve the optimal
convergence for solving deterministic VIs in a much simpler way than existing approaches. The stochastic counterpart of OE, i.e.,
SOE, achieves the optimal complexity for solving many stochastic VIs, including the stochastic smooth and strongly monotone VIs
 for the first time in the literature. As a smooth optimization method, SOE
allows the application of
mini-batch of samples for variance reduction  and hence facilitates distributed stochastic optimization.
Novel stochastic block operator extrapolation (SBOE) has been proposed,
for the first time in the literature, for solving deterministic VIs with a certain block structure.
Numerical experiments, conducted on a classic traffic assignment problem and
a more recent generalized linear model for signal estimation,
demonstrate the advantages of the proposed algorithms.

\renewcommand\refname{Reference}

\bibliographystyle{siam} 
\bibliography{OE_arXiv.bib}

\end{document}